\documentclass[11pt]{article}
\usepackage{appendix}
\usepackage{mathtools}
\usepackage[T1]{fontenc}
\usepackage{amsfonts}
\usepackage{amsmath}
\usepackage{amssymb}
\usepackage{amsthm}
\usepackage{bbm}
\usepackage{bm}
\usepackage{mathrsfs}
\usepackage{color}
\usepackage{pdfsync}
\usepackage{enumitem}
\usepackage{xcolor} %
\definecolor{darkgreen}{rgb}{0,0.5,0} %
\definecolor{darkblue}{rgb}{1,0,0} 

\usepackage[colorlinks=true, linkcolor=blue, citecolor=blue, urlcolor=blue]{hyperref}
\makeatletter
\renewcommand\@makefnmark{%
  \hbox{\textsuperscript{\textcolor{blue}{\@thefnmark}}}}
\makeatother
\usepackage{tikz}

\DeclareMathOperator*{\argmax}{argmax}

\DeclareMathOperator*{\spt}{supp}

\newcommand{\RR}{\mathbb{R}}
\newcommand{\R}{\RR}
\newcommand{\Z}{\mathbb{Z}}
\newcommand{\EE}{\mathbb{E}}
\newcommand{\PP}{\mathbb{P}}
\newcommand{\NN}{\mathbb{N}}

\newcommand{\eps}{ \varepsilon}

\newcommand{\proofreadhere}{}

\let\leqslant\leq
\let\geqslant\geq
\usepackage[margin=31 mm]{geometry}
\newcommand{\mykill}[1]{}
\usepackage[capitalize, noabbrev, nosort]{cleveref}
\crefname{equation}{}{} %

\theoremstyle{plain}
\newtheorem{theorem}{Theorem}[section]
\newtheorem{proposition}[theorem]{Proposition}
\newtheorem{lemma}[theorem]{Lemma}
\newtheorem{corollary}[theorem]{Corollary}
\theoremstyle{definition}
\newtheorem{definition}[theorem]{Definition}
\newtheorem{remark}[theorem]{Remark}
\newtheorem{example}[theorem]{Example}
\newtheorem{assumption}[theorem]{Assumption}
\newtheorem{standingassumption}[theorem]{Standing assumption}
\crefname{assumption}{Assumption}{Assumptions}
\Crefname{assumption}{Assumption}{Assumptions}
\crefname{standingassumption}{Assumption}{Assumptions}
\Crefname{standingassumption}{Assumption}{Assumptions}

\theoremstyle{remark}
\AddToHook{env/theorem/begin}{\crefalias{section}{theorem}}
\AddToHook{env/proposition/begin}{\crefalias{theorem}{proposition}}
\AddToHook{env/lemma/begin}{\crefalias{theorem}{lemma}}
\AddToHook{env/corollary/begin}{\crefalias{theorem}{corollary}}
\AddToHook{env/definition/begin}{\crefalias{theorem}{definition}}
\AddToHook{env/remark/begin}{\crefalias{theorem}{remark}}
\AddToHook{env/example/begin}{\crefalias{theorem}{example}}
\AddToHook{env/assumption/begin}{\crefalias{theorem}{assumption}}
\AddToHook{env/standingassumption/begin}{\crefalias{theorem}{standingassumption}}

\crefname{theorem}{Theorem}{Theorems}
\crefname{proposition}{Proposition}{Propositions}
\crefname{lemma}{Lemma}{Lemmas}
\crefname{corollary}{Corollary}{Corollaries}
\crefname{definition}{Definition}{Definitions}
\crefname{remark}{Remark}{Remarks}
\crefname{example}{Example}{Examples}
\crefname{assumption}{Assumption}{Assumptions}
\crefname{standingassumption}{Assumption}{Assumptions}

\newlist{myenum}{enumerate}{3}
\setlist[myenum,1]{label={\rm (H\arabic*)},
                   ref  ={\rm (H\arabic*)}}
\crefname{myenumi}{property}{properties}

\renewenvironment{thebibliography}[1]{%
\begin{oldthebibliography}{#1}%
\setlength{\baselineskip}{.9em}
\linespread{1}
\small
\setlength{\parskip}{.40ex}%
\setlength{\itemsep}{.30em}%
}%
{%
\end{oldthebibliography}%
}

\definecolor{grape}{rgb}{0.43, 0.17, 0.71}

\begin{document}

\title{Finite-sample bounds for regularized optimal transport}
\date{\today}
\author{  
  Alberto Gonz{\'a}lez-Sanz%
  \thanks{Department of Statistics, Columbia University, \textcolor{blue}{ag4855@columbia.edu}}   \and  Marcel Nutz%
  \thanks{Departments of Mathematics and Statistics, Columbia University, \textcolor{blue}{mnutz@columbia.edu}. Research supported by NSF Grants DMS-2106056, DMS-2407074.} \and Austin J.~Stromme%
  \thanks{Department of Statistics, CREST, ENSAE, IP Paris, \textcolor{blue}{austin.stromme@ensae.fr}.}
  }
  
\maketitle 

\vspace{-1.5em}
\begin{abstract}
We study the sample complexity of regularized optimal transport for general convex regularizations including the Kullback--Leibler divergence and $L^p$ penalties. Our main results are non-asymptotic bias and variance bounds for the empirical cost, with explicit dependence on the regularization parameter and on the intrinsic dimension of the marginals. Our approach simultaneously improves, unifies, and extends existing finite-sample bounds. In particular, we improve the state of the art for entropic optimal transport, and we obtain the first fully quantitative results for $L^p$ regularization with $1<p<\infty$. For the quadratic transport cost, we deduce that quadratically regularized optimal transport (i.e., $L^2$ regularization) estimates the unregularized optimal transport cost at rate $n^{-2/(d+4)}$, the fastest non-asymptotic rate currently available for any estimator based on regularized optimal transport.
\end{abstract}

\vspace{.3em}

{\small
\noindent \emph{Keywords} Central Limit Theorem; Optimal Transport; Sparse Regularization; Sample Complexity

\noindent \emph{AMS 2020 Subject Classification}  62G05; {62R10}; {62G30}
}
 \vspace{.1em}

\setcounter{tocdepth}{2}
\tableofcontents

\section{Introduction}

Optimal transport (OT) is a widely used method for comparing probability distributions (e.g., \cite{villani2008optimal,santambrogio2015optimal,peyre2019computational}). Given probability measures $P$ and $Q$ on $\R^d$ and a cost function $c:\R^d\times \R^d \to \R$, the optimal transport problem is
\[
{\rm OT}:={\rm OT}(P, Q)= \inf_{\pi \in \Pi(P, Q)} \int c\, d\pi,
\]
where $\Pi(P, Q)$ denotes the set of all couplings between $P$ and $Q$. 
In many applications of OT, the marginals $P$ and $Q$ are not explicitly known; instead, one has access to i.i.d.\ samples. Given independent random variables $X_1, \dots, X_n, Y_1, \dots, Y_n$ with $X_i\sim P$ and $Y_i\sim Q$, denote the associated empirical measures by $\widehat{P} = \frac{1}{n}\sum_{i=1}^n \delta_{X_i}$ and $\widehat{Q} = \frac{1}{n}\sum_{j=1}^n \delta_{Y_j}$. Then the empirical OT problem is 
$$
\widehat{\rm OT}:= {\rm OT}(\widehat{P},\widehat{Q}) = \inf_{\pi\in \Pi( \widehat{P} ,   \widehat{Q} )} \int c \,d\pi.
$$
It is well known that OT suffers from the \emph{curse of dimensionality}: in general, the rate at which the empirical cost $\widehat{\rm OT}$ converges to its population counterpart ${\rm OT}$ is of order $n^{-1/d}$~\cite{Dudley.69,Fournier.2014.PTRF}, limiting the applicability of OT in high-dimensional problems.

Regularization by a strictly convex penalty is a common approach to remedy the curse of dimensionality while also reducing the computational burden of OT. Given a regularization parameter $\eps>0$ and a strictly convex function $\phi:\R_+\to\R$, we study the regularized optimal transport (ROT) problem
\begin{align}\label{rotIntro}
  {\rm OT}_{\phi,\eps} :=  {\rm OT}_{\phi,\eps}(P,Q)=  \inf_{\pi\in \Pi( P ,   Q )} \int c \,d\pi  
  +{\eps } D_\phi(\pi|P\otimes Q), 
\end{align} 
where 
$$ 
  D_\phi(\pi|P\otimes Q):= \int \phi\biggl( \frac{d \pi}{ d( P \otimes   Q )}  \biggr) d( P \otimes   Q ),
$$
as well as its empirical counterpart $\widehat{\rm OT}_{\phi,\eps} :=  {\rm OT}_{\phi,\eps}(\widehat P,\widehat Q)$. The most popular choice for~$\phi$ is $\phi_1(t):=t\log t$, leading to Kullback--Leibler divergence $D_{\text{KL}}$ and the \emph{entropic optimal transport} (EOT) problem ${\rm EOT} := {\rm OT}_{\phi_1,\eps}$. Indeed, EOT was popularized on computational grounds---the solution to its dual problem can be computed efficiently via Sinkhorn's algorithm~\cite{Cuturi.2013.Neurips}, which enjoys linear convergence (see \cite{FRANKLIN.1989,Carlier.2022.SIOPT,ChizatDelalandeVaskevicius.25}, among many others). Starting with~\cite{genevay.2019.PMLR}, several works demonstrated that entropic regularization
also evades the
curse of dimensionality (for fixed $\eps > 0$). In general, the dual problem of~\eqref{rotIntro} is
 \begin{align}
    \label{dualIntro}
     \sup_{(f,g)\in L^\infty(  P )\times L^\infty(  Q )} \Phi_{\phi,\eps}(f,g ),
\end{align}
$$ \Phi_{\phi,\eps}(f,g ) :=\int\biggl\{ f(x) + g(y) - \eps\cdot\psi\biggl(\frac{f(x)+g(y)-c(x,y)}{\eps} \biggr) \biggr\}d(  P \otimes   Q )(x,y),$$
where $\psi(s):=\sup_{t\geq 0} \{ st-\phi(t)\}$ is the convex
conjugate of $\phi$. For EOT, the conjugate is $\psi(s)=e^{s-1}$, and the smoothness of this dual problem is the basis of~\cite{genevay.2019.PMLR}. While the constant in their bound was exponential in $1/\eps$, this was subsequently improved to polynomial for smooth costs by \cite{MenaWeed.2019.Nips} and to Lipschitz costs by \cite{Stromme.24}, whereas \cite{rigollet2022samplecomplexityentropicoptimal} obtained a bound with exponential dependence for bounded measurable cost.

Starting with \cite{Muzellec.2017.AAAI,blondel18quadratic,EssidSolomon.18,Lorenz.2019}, a growing literature studies alternative regularizations, in particular quadratically regularized optimal transport (QOT) which uses the squared $L^2$ norm $\phi_2(t) :=\frac{1}{2} t^2$ (or, equivalently up to a constant, the $\chi^2$ divergence $\frac{1}{2} (t^2-1)$). This literature observed that the resulting transport plans exhibit sparsity, in the sense that the support converges to the support of the unregularized OT plan (which is typically a graph) as $\eps\to0$, in contrast to the EOT plans which have full support for any $\eps>0$.  Sparsity is shared by the $L^p$ regularizer $\phi_p(t) = \frac{1}{p} t^p$ for $p\in(1,\infty)$ (or equivalently the $p$-Tsallis divergence), which for $p\in(1,2)$ interpolates between EOT and QOT. See also \cite{WieselXu.24,GonzalezSanzNutz2024.Scalar,Nutz.2024,gonzalezsanz2026sharplocalsparsityregularized} for recent analytic results on sparsity, and \cite{GonzalezSanzNutzRiveros.25,GonzalezSanzNutzRiveros.26} for algorithms with linear convergence guarantees.

For such regularizers, the conjugate 
is $\psi_p(s) = \frac{(s)_+^q}{q}$ if $p>1$, where $q=\frac{p}{p-1}$ denotes the conjugate exponent and $(s)_+=\max(0,s)$ the positive part. The
limited smoothness of the resulting dual problem~\eqref{dualIntro} might lead one to conjecture that such ROT problems have significantly worse sample complexity than EOT, or even suffer from a direct curse of dimensionality, as suggested by the bounds in \cite{BayraktarEckstein.2025.BJ}. Recently, \cite{gonzalezsanz.2025.sparseregularizedoptimaltransport, GonzalezSanzDelBarrioNutz.25} obtained central limit theorems with the usual rate $\sqrt{n}$ for $p\in(1,2]$, giving the first indications that less-smooth penalties still have a strong regularizing effect in terms of sample complexity. Specifically, \cite{gonzalezsanz.2025.sparseregularizedoptimaltransport} covered Lipschitz costs $c$ and divergences for which $\psi\in\mathcal{C}^2$---thus including $p\in[1,2)$, but not $p\geq2$---and exploited this regularity in a Z-estimation approach. QOT, corresponding to the boundary case of $p=2$ where $\psi\in\mathcal{C}^{1,1}$, was covered in \cite{GonzalezSanzDelBarrioNutz.25}, which restricts to the quadratic cost $c(x,y)=\|x-y\|^2$ for technical reasons. These central limit theorems imply sample complexity results at rate $n^{-1/2}$, which are however qualitative in that the leading constants are not quantified. The latter is inherent to the proof technique using compactness arguments; in
particular, a direct comparison with the
sample complexity of EOT was not possible. For $p>2$, parametric sample complexity was not known before the present work, regardless of quantification of constants.

For further background on statistical
optimal transport, we refer the reader to the comprehensive surveys~\cite{delbarrio.et.al.2025.survey,balakrishnan.2025.statisticalinferenceoptimaltransport,Chewi_Niles-Weed_Rigollet}.

\subsection{Summary of contributions}
We develop new approaches to sample complexity which simultaneously improve, unify, and extend the previous results. In particular, we

\begin{itemize}
    \item recover and improve the sharpest known sample complexity results for EOT;
    \item give the first fully quantitative results for $L^p$ regularization with $p\in(1,2]$, and in particular for QOT;
    \item give the first parametric results for general regularizations with conjugate $\psi\in\mathcal{C}^{1}$, including  $L^p$ for arbitrary $p\in(1,\infty)$.
\end{itemize}

The following paragraphs summarize a selection of the take-aways from our main results in the body of the text; the underlying proof approaches are explained in the subsequent \cref{se:proof-strategies}. For simplicity of exposition, we focus on the $p$-Tsallis (or $L^p$)  entropies for $p\in[1,\infty)$, including EOT for $p=1$ and QOT for $p=2$, even though some results in the body are more general. Moreover, we may focus on the bias: the mean squared error admits the bias--variance decomposition~\eqref{eq:bias-variance-decomp} and we will see in \cref{pr:variance} that the variance admits a very general $n^{-1}$ rate with coefficient independent of~$\eps$ and~$d$.

Our first main result establishes an $n^{-\frac{1}{2}}$ rate of the bias (the so-called slow rate) with polynomial dependence on the regularization~$1/\eps$. For $p=1$ (EOT), this result recovers the best known constant (see~\cite{Stromme.24}). For $p\in(1,2]$, and in particular for QOT, it is the first parametric sample complexity result with quantification of the constant, and for $p>2$, it is the first parametric rate at all. 

\begin{theorem}[Slow rate]\label{Theorem:Slow-bias}
    Assume that  $P$ and $Q$ are supported on bounded sets $\Omega$ and $\Omega'$ with Minkowski dimensions ${\rm dim}_M(\Omega)$ and ${\rm dim}_M(\Omega')$ (as defined in \cref{sect:Lp-Entropies-cost-adaptation}). Assume further that $c$ is Lipschitz. Then, for every $ \alpha> \min( {\rm dim}_M(\Omega),{\rm dim}_M(\Omega'))  $,  
    $$\bigl|\EE\bigl[\widehat{\rm OT}_{\phi_p,\eps}\bigr] -{\rm OT}_{\phi_p,\eps}\bigr| \lesssim \frac{\eps^{-\frac{\alpha }{  2(1+(p-1) \alpha)}}}{\sqrt{n}}, \quad \text{for }p\in[1,\infty).$$
\end{theorem}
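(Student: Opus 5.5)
The plan is to bound the bias through the dual formulation \eqref{dualIntro}, using an empirical-process argument over a class of dual potentials whose complexity is controlled by the Lipschitz regularity of $c$ and the Minkowski dimension of the supports. Fix optimal population potentials $(f^*,g^*)$ and empirical ones $(\hat f,\hat g)$; both can be taken $c$-concave-like, i.e.\ Lipschitz with the constant of $c$ and uniformly bounded, because the first-order conditions give $f(x)$ as the solution of $\int (\psi)'((f(x)+g(y)-c(x,y))/\eps)\,dQ(y)=1$. Weak duality and optimality give the sandwich
\[
\Phi^{\widehat P\otimes\widehat Q}_{\phi,\eps}(f^*,g^*)-\Phi^{P\otimes Q}_{\phi,\eps}(f^*,g^*)\le \widehat{\rm OT}_{\phi,\eps}-{\rm OT}_{\phi,\eps}\le \Phi^{\widehat P\otimes\widehat Q}_{\phi,\eps}(\hat f,\hat g)-\Phi^{P\otimes Q}_{\phi,\eps}(\hat f,\hat g).
\]
The left side has mean zero after accounting for the diagonal terms of the V-statistic, which are $O(1/n)$. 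So the whole issue is the upper side.

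For the upper side, I would avoid a uniform bound over all Lipschitz pairs, since that gives $n^{-1/\alpha}$. Instead, I would exploit the one-sided structure: only one marginal needs to be "semi-discrete". Conditioning on the $Y$-sample, I would replace $\hat f$ by the $\psi$-transform $T(\hat g)$ against $\widehat Q$. This is a function of $x$ determined by $n$ values through a smooth (for $p\le 2$) or monotone operator. I then use strong concavity of $\Phi$ in $f$ for fixed $g$. The curvature is $\psi''/\eps$, which for $\psi_p$ scales like $\eps^{-1}(\text{density})^{2-p}$. This gives a quadratic-growth/localization inequality: the excess $\Phi(f^*,g^*)-\Phi(\hat f,\hat g)$ dominates $\eps^{-1}$ times a weighted squared $L^2$ distance of the potentials. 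A standard localized (peeling/Dudley) argument then bounds the expected supremum of the empirical process over a ball of radius $r$ in this norm, intersected with $L$-Lipschitz functions on $\Omega$. The key covering input is: Lipschitz functions on a set of Minkowski dimension $<\alpha$ have $\delta$-entropy $\lesssim \delta^{-\alpha}$, so we choose the side with the smaller dimension.

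The $\eps$-dependence comes from balancing scales. Smooth the potentials at spatial scale $h$: a Lipschitz function is approximated within $h$ by a function taking $\lesssim h^{-\alpha}$ values. The approximation error enters through $\psi'$ and costs roughly $h^{2}\cdot(\text{curvature})$. The finite-dimensional part costs $\sqrt{h^{-\alpha}/n}$ times the Lipschitz/density scale. For the $L^p$ case, the plan density is $\lesssim \eps^{-1/(p-1)}$-localized: its support has width $w$ with $w^{\alpha}\cdot w^{-\alpha}$ normalization, giving a density of order $\eps^{-\alpha/(1+(p-1)\alpha)}$ and a width $\eps^{1/(1+(p-1)\alpha)}$ (for $p=1$, width $\eps$, matching Stromme). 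Taking $h$ equal to that width yields variance proxy $\eps^{-\alpha/(1+(p-1)\alpha)}/n$, hence the claimed $\eps^{-\alpha/(2(1+(p-1)\alpha))}/\sqrt n$.

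The main obstacle is the localization step for $p>2$, where $\psi_p\in\mathcal C^1$ only and $\psi''$ degenerates near the support boundary. My first attempt there is to replace strong concavity by a Bregman-divergence lower bound with $\psi$'s Bregman divergence, which stays nonnegative and comparable to a weighted $L^q$ distance. I would then run the chaining argument in that divergence rather than $L^2$, checking that the width estimate for the plan's support still holds uniformly via the first-order condition.
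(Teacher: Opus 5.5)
\textbf{There is a genuine gap.} The upper half of your sandwich, $\widehat{\rm OT}_{\phi,\eps}-{\rm OT}_{\phi,\eps}\le(\widehat\Phi_{\phi,\eps}-\Phi_{\phi,\eps})(\hat f,\hat g)$, overshoots by exactly $\Phi_{\phi,\eps}(f_\eps,g_\eps)-\Phi_{\phi,\eps}(\hat f,\hat g)\ge0$. That is the population suboptimality of the empirical potentials. The bound also integrates a data-dependent pair against $\widehat P\otimes\widehat Q-P\otimes Q$. Both features force uniform control over $2L$-Lipschitz potentials, and none of your three devices removes the curse of dimensionality there.

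(a) There is no usable quadratic growth. For every $p>1$ (not only $p>2$), $\psi_p''(\gamma_r)=0$ wherever both $(f_\eps,g_\eps)$ and $(\hat f,\hat g)$ give zero plan density. On that set the increment $h_{\hat f,\hat g}-h_{f_\eps,g_\eps}$ equals $(\hat f-f_\eps)\oplus(\hat g-g_\eps)$, which your $\psi''$-weighted seminorm does not see; for $p=1$ the weight is exponentially small instead. Genuine $L^2$ coercivity is what \cref{Section-compl-fast} establishes, and only for $p\le2$, convex support and two-sided density bounds, at a cost of $\eps^{-(3d+2)}$.

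(b) Even granting localization, for $\alpha>2$ the truncated Dudley bound $\inf_\tau\{\tau+n^{-1/2}\int_\tau^r\delta^{-\alpha/2}\,d\delta\}\asymp n^{-1/\alpha}$ does not improve as the radius $r$ shrinks. The entropy blows up at small scales. This is the same obstruction that gives $\EE\,W_1(\widehat P,P)\asymp n^{-1/k}$ for uniform $P$ on $[0,1]^k$: the extremal test functions have sup-norm $\asymp n^{-1/k}$, so they lie in every localized ball.

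(c) Your own discretization error $h^2\psi''/\eps$ does not depend on $n$. With $h$ fixed at the plan width, the bound is therefore not $O(n^{-1/2})$. Letting $h\to0$ with $n$ and balancing against $\sqrt{h^{-\alpha}/n}$ gives a rate slower than $n^{-1/2}$, not $C(\eps)/\sqrt n$.

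\textbf{Missing idea.} The paper never evaluates one potential under two measures; it compares two potentials under the \emph{same} empirical measure.

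Since the $X$- and $Y$-samples are independent, $\EE[\widehat\Phi_{\phi,\eps}(f_\eps,g_\eps)]={\rm OT}_{\phi,\eps}$ exactly; there are no diagonal terms. Hence the bias equals $\EE[\widehat\Phi_{\phi,\eps}(\hat f,\hat g)-\widehat\Phi_{\phi,\eps}(f_\eps,g_\eps)]\ge0$.

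Concavity of the integrand bounds this by $\EE\int[(\hat f-f_\eps)\oplus(\hat g-g_\eps)]\,\xi\,d(\widehat P\otimes\widehat Q)$ with $\xi=1-\rho_\eps$. The population first-order condition gives $\EE[\xi(X_1,Y_j)\mid X_1]=0$, so the cross terms vanish. Exchangeability and Cauchy--Schwarz then give the bound $\bigl(\EE\|\hat f-f_\eps-a\|^2_{L^2(\widehat P)}\,{\rm Var}_{P\otimes Q}[\rho_\eps]/n\bigr)^{1/2}$ plus its symmetric counterpart (\cref{pr:general}). The data dependence of $\hat f$ is absorbed by Cauchy--Schwarz. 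Only $\|\hat f\oplus\hat g\|_\infty\le5\|c\|_\infty+\eps\phi'(1)$ is needed, and no entropy of the potential class enters.

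The dimension enters only through ${\rm Var}_{P\otimes Q}[\rho_\eps]$. Integrate the first-order condition over $\overline{\mathbb B}_r(x_0)$ and use that $f_\eps\oplus g_\eps-c$ is $2L$-Lipschitz in $x$. This gives $\rho_\eps(x_0,y_0)\le\psi'\bigl(\phi'(1/P(\overline{\mathbb B}_r(x_0)))+2Lr/\eps\bigr)$ (\cref{lemma:Bound-lipschitz}). Hence $\int\rho_\eps^2\,d(P\otimes Q)\lesssim\mathcal N(\Omega,r/4)+(Lr/\eps)^{1/(p-1)}$, or $\lesssim\mathcal N(\Omega,r/4)\,e^{2Lr/\eps}$ for $p=1$. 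Optimizing at $r\asymp\eps^{1/(1+(p-1)\alpha)}$, with the lower-dimensional marginal chosen by symmetry, gives the theorem.

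Your heuristic width and peak density are exactly this optimization. They belong in this variance bound, not in a chaining argument.
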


\cref{Theorem:Slow-bias} also shows for the first time that the constant for ROT depends only on the \emph{minimum} of the dimensions of $P$ and $Q$; for instance, in the extreme case of semi-discrete transport where one marginal is discrete, the dimension disappears completely (\cref{section:Complexity-semidiscrete}). 
 This phenomenon was first identified 
 for unregularized OT,
 where it was called the lower complexity adaptation principle (LCAP)~\cite{Hundrieser.et.al.AHIP.2024}
 and then
for the entropically regularized
 problem, where it was dubbed minimum intrinsic dimension scaling~\cite{Stromme.24,Groppe-Shayan-LCA-JMLR}.
 \cref{Theorem:Slow-bias} shows that
 this phenomenon holds for general ROT problems.

A second, more subtle, implication
of~\cref{Theorem:Slow-bias} is that the statistical complexity of EOT ($p = 1$)
is significantly more sensitive to the dimension than estimators with $L^p$ penalties for $p>1$. Indeed, the exponent of $1/\eps$ is $\alpha/2$ for $p=1$, hence linear in the dimension $\alpha$, whereas for fixed $p>1$, the exponent tends to the constant $\frac{1}{2(p-1)}$ for large~$\alpha$. This might appear counterintuitive at first glance, given that EOT leads to the smoothest plans. The resolution of this paradox lies in the trade-off between statistical complexity and regularization bias (see \cref{section:tradeoff}): the meaning of $\eps$ is not directly comparable across the problems. 

For the first time, our quantitative results enable a more meaningful way of comparing different regularizations by directly evaluating the estimation of the unregularized OT problem. Indeed, tuning $\eps$ as a function of $n$, \cref{Theorem:Slow-bias} implies a rate for the approximation of the unregularized OT problem. For instance, for EOT, we recover 
$$\bigl|\EE\bigl[\widehat{\rm EOT}\bigr] -{\rm OT}\bigr| \lesssim n^{-\frac{1}{{\alpha}+2}}\,
(\log n)^{\frac{{\alpha}}{{\alpha}+2}},$$
and for QOT we obtain the comparable rate
$$\bigl|\EE\bigl[\widehat{\rm QOT}\bigr] -{\rm OT}\bigr| \lesssim n^{-\frac{1}{{\alpha}+2}}.
$$
However, it turns out that, specifically for QOT, this rate can be significantly improved in the key case where $c$ is the quadratic cost $c(x,y)=\|x-y\|^2$. Here, our result (see \cref{prop:improved_qot_quadratic_cost}) is
$$
\bigl|\EE\bigl[\widehat{\rm QOT}\bigr] -{\rm OT}\bigr|
\lesssim n^{-\frac{2}{d + 4}}.
$$
Remarkably, this bound is strictly better than any previously known rate for regularized OT derived from a non-asymptotic bound for general~$\eps$---EOT included---and thus positions QOT as a competitive alternative. Moreover, this bound almost achieves $n^{-2/d}$, which is the minimax rate for estimating ${\rm OT}$ in this setting (for $d>4$).

To conclude our discussion of the slow rate, we mention a central limit theorem for the empirical cost (\cref{Theorem:CLT}), centered at its population counterpart and with parametric rate $n^{-\frac{1}{2}}$. It generalizes the central limit theorems of~\cite{gonzalezsanz.2025.sparseregularizedoptimaltransport, GonzalezSanzDelBarrioNutz.25} for the empirical cost to more general divergences and transport costs, with a completely different proof. In EOT, central limit theorems are well established: \cite{MenaWeed.2019.Nips} and \cite{delBarrioEtAl.2023.SIMODS,Goldfeld.et.al.2022.statisticalinferenceregularizedoptimal} proved central limit theorems for the fluctuations and empirical cost, respectively, assuming smooth transport costs, later generalized to bounded transport costs in \cite{GonzalezSanz.2023.Beyond}. See also \cite{GonzalezSanz.Loubes.Weed.2024.weaklimits,Goldfeld.et.al.2024.EJS}   for central limit theorems for the empirical plans and potentials, \cite{li2025samplecomplexityweaklimits} for multimarginal EOT, and \cite{Bigot.Cazelles.Papadakis.CLTEOT.2019} for the discrete case.

\bigskip

Our second main result establishes an $n^{-1}$ rate for the bias, the so-called fast rate, under stronger assumptions on the marginals and for $p\in [1,2]$. This faster rate comes at the expense of a larger constant (compared to~\cref{Theorem:Slow-bias}); the constant remains polynomial in $1/\eps$ but is now exponentially growing in the dimension even for $p>1$. Our result improves upon the best previous result for EOT; for other regularizations, there is no comparable previous result.

\begin{theorem}[Fast rate]\label{Theorem:fast-bias}
    Assume that  $P$ and $Q$ are supported on bounded convex sets with densities bounded away from zero and infinity. Assume further that $c$ is Lipschitz. Then
    $$\bigl|\EE\bigl[\widehat{\rm OT}_{\phi_p,\eps}\bigr] -{\rm OT}_{\phi_p,\eps}\bigr| \lesssim \frac{ \eps^{-(3 d+2+\frac{d}{1+d(p-1)})}}{n } , \quad \text{for $p\in [1,2]$}.
    $$ 
\end{theorem}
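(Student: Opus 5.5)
We aim to show that the bias is a second-order quantity, following the fast-rate strategy familiar from EOT (as in Stromme's approach) but written for general $\psi$ with $\psi\in\mathcal C^{1,1}$ when $p\in[1,2]$. Let $(f^*,g^*)$ be population dual optimizers and $(\hat f,\hat g)$ empirical ones, normalized so that they are bounded uniformly (Lipschitz with the constant of $c$, and with oscillation controlled). Write $\widehat\Phi$ for the empirical dual functional. Duality gives two inequalities:
\[
\widehat{\rm OT}_{\phi_p,\eps}=\widehat\Phi(\hat f,\hat g)\ge \widehat\Phi(f^*,g^*),\qquad {\rm OT}_{\phi_p,\eps}=\Phi(f^*,g^*)\ge \Phi(\hat f,\hat g).
\]
The term $\widehat\Phi(f^*,g^*)$ is a two-sample V-statistic whose mean equals $\Phi(f^*,g^*)$ up to an $O(1/n)$ diagonal correction. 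Hence $\EE\widehat{\rm OT}_{\phi_p,\eps}-{\rm OT}_{\phi_p,\eps}\gtrsim -\|\psi\|_\infty$-type$/n$, which already has the right order. The main work is the upper bound on $\EE[\widehat\Phi(\hat f,\hat g)-\widehat\Phi(f^*,g^*)]$.

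For the upper bound, first use $\mathcal C^{1,1}$-smoothness of $\psi$ (with $\psi''\le C\eps^{-?}$ on the relevant range; for $p\in(1,2]$, $\psi_p'$ is $(q-1)$-Hölder with $q\ge2$, hence locally Lipschitz on bounded sets). Expanding $\widehat\Phi$ around $(f^*,g^*)$ gives
\[
\widehat\Phi(\hat f,\hat g)-\widehat\Phi(f^*,g^*)\le \langle \nabla\widehat\Phi(f^*,g^*),(\hat f-f^*,\hat g-g^*)\rangle .
\]
This holds because $\widehat\Phi$ is concave. The gradient is the vector of empirical marginal defects of the population plan density $\xi^*=\psi'((f^*\oplus g^*-c)/\eps)$, namely $1-\frac1n\sum_j\xi^*(\cdot,Y_j)$ and its counterpart. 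These have mean zero and variance $O(\|\xi^*\|_\infty^2/n)$, so by Cauchy--Schwarz the bias is at most $n^{-1/2}$ times $\EE\|(\hat f-f^*,\hat g-g^*)\|_{L^2(\widehat P\otimes\widehat Q)}$. We therefore need a matching $n^{-1/2}$ rate for the potentials. To get it, combine strong concavity of $\widehat\Phi$ in the direction $(\hat f-f^*,\hat g-g^*)$ with the same gradient bound. This yields
\[
\|(\hat f-f^*)\oplus(\hat g-g^*)\|_{L^2(\hat\xi\,\cdot)}\lesssim \kappa^{-1}\|\nabla\widehat\Phi(f^*,g^*)\|,
\]
so the bias is $\lesssim \kappa^{-1}\EE\|\nabla\widehat\Phi(f^*,g^*)\|^2\lesssim \kappa^{-1}\|\xi^*\|_\infty^2/n$.

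The main obstacle is the curvature constant $\kappa$. For $p>1$, $\psi''$ vanishes off the support of the plan, so the Hessian $\eps^{-1}\int\psi''(\cdot)\,(u\oplus v)^2$ is only a weighted norm over the support $\{f^*\oplus g^*>c\}$. To handle this, I would prove a Poincaré-type inequality on the bipartite "support graph". Convexity of the supports and densities bounded above and below should ensure that each $x$-section of the support has $Q$-mass $\gtrsim \eps^{d/(1+d(p-1))}$-type volume, which follows from the size of the sparse support (the ball where $f^*\oplus g^*-c>0$, obtained from the Lipschitz control of $c$ and the marginal constraint $\int\xi^*dQ=1$). One then chains through overlapping balls across the convex domain. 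The chaining, carried out with $\eps$-size steps over a domain of diameter $O(1)$, produces the factors $\eps^{-3d-2}$; the section-mass lower bound produces the factor $\eps^{-d/(1+d(p-1))}$. This inequality must hold also for the empirical graph, with high probability, which I would get from a covering argument on $\eps$-balls. The bad event has probability $e^{-cn\eps^{d}}$, and on it the trivial bound on the bias is enough. Finally, I would combine $\kappa^{-1}$ with $\|\xi^*\|_\infty\lesssim\eps^{-d(p-1)/(1+d(p-1))}$-type bounds to collect the stated exponent; I would not expect tight bookkeeping beyond matching $\eps^{-(3d+2+\frac{d}{1+d(p-1)})}$.
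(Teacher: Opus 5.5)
Your architecture matches the paper's: the concavity bound $\EE[\widehat\Phi(\widehat f,\widehat g)-\widehat\Phi(f^*,g^*)]\le\EE\langle\nabla\widehat\Phi(f^*,g^*),\Delta\rangle$ as in \cref{pr:general}, an $n^{-1/2}$ rate for the potentials from coercivity of the empirical Hessian as in \cref{theorem:potentials}, and a Poincar\'e inequality for a random geometric graph on the convex support. Here $\Delta=(\widehat f-f^*,\widehat g-g^*)$ and $\|\Delta\|=\|(\widehat f-f^*)\oplus(\widehat g-g^*)\|_{L^2(\widehat P\otimes\widehat Q)}$. Your combination via $\|\Delta\|\le\kappa^{-1}\|\nabla\widehat\Phi(f^*,g^*)\|$ on the good event is a bit more direct than the paper's quadratic inequality for $\EE[A_n]$, and gives the same order.

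However, the statement is an exponent, and as written your bookkeeping does not produce it. Bounding the gradient's second moment by $\|\xi^*\|_\infty^2/n$ is too lossy. By the first-order conditions, conditionally on $X_i$ the terms $1-\xi^*(X_i,Y_j)$ are i.i.d.\ and centered. Hence $\EE\|\nabla\widehat\Phi(f^*,g^*)\|^2_{L^2(\widehat P)\times L^2(\widehat Q)}=2\,{\rm Var}_{P\otimes Q}[\rho_\eps]/n$ exactly, and ${\rm Var}_{P\otimes Q}[\rho_\eps]\lesssim\eps^{-d/(1+d(p-1))}$ by \cref{lemma:Bound-lipschitz,lemma:LCAP-for-Tsallis-bound-variance}.

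Your sup bound is also wrong. For $p>1$, $\eps^{-d(p-1)/(1+d(p-1))}$ is the order of the argument $(f^*\oplus g^*-c)/\eps$, whereas $\rho_\eps=\psi_p'(\cdot)$ of it is of order $\eps^{-d/(1+d(p-1))}$. Indeed $\|\rho_\eps\|_\infty\ge\int\rho_\eps^2\,d(P\otimes Q)\gtrsim\eps^{-d/(1+d(p-1))}$ in \cref{example:sharp}, and at $p=1$ your bound would make the EOT density bounded uniformly in $\eps$. So even with the right curvature $\kappa\asymp\eps^{3d+2}$, the $\|\xi^*\|_\infty^2$ route lands at $\eps^{-(3d+2+\frac{2d}{1+d(p-1)})}/n$.

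In the paper the exponent splits cleanly:
\begin{itemize}
\item $3d+2$ is $1/\beta_{n,\eps}$, with $\beta_{n,\eps}\asymp\eps^{d}\cdot\eps^{d+2}\cdot\eps^{d}$. These three factors come from $\min_j\widehat Q(\mathbb B_{c\eps}(Y_j))$ in the bipartite-to-$X$-graph reduction (\cref{Lemma:From two-to-one-variance}), the nonlocal Poincar\'e constant (\cref{Lemma:Bound-energy}), and $\min_i\widehat P(\mathbb B_{c\eps}(X_i))$ in the eigenvalue argument (\cref{proposition:coercivity}), with $\gamma(n,\eps)=O(1)$ by \cref{lemma:Bound-Gamma}.
\item $\frac{d}{1+d(p-1)}$ comes from ${\rm Var}_{P\otimes Q}[\rho_\eps]$, not from the curvature.
\end{itemize}
Your accounting (chaining gives $\eps^{-(3d+2)}$, section mass gives a further $\eps^{-d/(1+d(p-1))}$) is inconsistent. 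The $3d+2$ already contains the masses of the sets on which curvature is certified, so a section-mass bound would replace one of those factors rather than multiply them.

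Second, ``strong concavity in the direction $\Delta$'' must be made precise, and this is where $p>1$ bites. The weight $\psi''(\gamma_r)$ changes along the segment and may vanish (at $p=2$ it is even discontinuous). So a Hessian lower bound at the single point $(f^*,g^*)$ says nothing about $\Gamma''(r)$ elsewhere.

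What you need is $\Gamma''(r)\gtrsim\frac{\beta}{\eps}\|\Delta\|^2$ for all $r$ in an interval of length $t_\eps\asymp\eps$ at one endpoint. Since $\Gamma'$ is Lipschitz and nondecreasing with $\Gamma'(0)=0$ (\cref{lemma:taylor}), this gives $\Gamma'(1)\gtrsim t_\eps\frac{\beta}{\eps}\|\Delta\|^2\asymp\beta\|\Delta\|^2$.

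The paper does this at the empirical endpoint in three steps:
\begin{itemize}
\item The uniform bound $5\|c\|_\infty+\eps\phi'(1)$ on both pairs of potentials gives $\gamma_r\ge(\widehat f\oplus\widehat g-c)/\eps-\delta_\psi/4$ for $r\le t_\eps$.
\item The empirical first-order condition supplies, for each $X_i$, some $Y_{j(i)}$ at which the argument is at least $t_\psi$.
\item Monotonicity of $\psi''$ and the $2L$-Lipschitz property then give $\psi''\ge\psi''(t_\psi-\delta_\psi)>0$ on $\eps$-balls around $(X_i,Y_{j(i)})$.
\end{itemize}
The analogous argument at the population endpoint also works, but it has to be carried out. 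It certifies curvature only on $\eps$-balls, not on whole support sections, since for $p\in(1,2)$ $\psi''$ vanishes at the boundary of the support.

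A minor point: the $X$- and $Y$-samples are independent, so $\EE\widehat\Phi(f^*,g^*)=\Phi(f^*,g^*)$ with no diagonal correction, and the bias is nonnegative.
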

For $p=1$, this specializes to 
$$
\bigl|\EE\bigl[\widehat{\rm EOT}\bigr] -{\rm EOT}\bigr| \lesssim \frac{   \eps^{-(4d+2)}  }{ n}
$$
and this leading term is strictly smaller than the $\eps^{-9d-6}$ obtained in \cite[Theorem~12]{Stromme.24}. Moreover, our result allows for the practical case of marginal supports with boundary, whereas \cite{Stromme.24} assumes manifolds without boundary. We note that, although
\cref{Theorem:fast-bias} involves the ambient dimension~$d$, it is consistent with
the lower complexity adaptation principle because the
measures are indeed supported on $d$-dimensional sets.

As we discuss in more detail below,
the improved rate in \cref{Theorem:fast-bias}
is enabled by a fine analysis of the convergence
of the dual potentials. It shows that the empirical potentials $(\widehat{f}, \widehat{g})$ converge to the potentials $(f_\eps,g_\eps)$  at a parametric rate.
\begin{theorem}[Rate for potentials]\label{Theorem:potentials-intro}
    Assume that  $P$ and $Q$ are supported on bounded convex sets with densities bounded away from zero and infinity. Assume further that $c$ is Lipschitz. Then 
    $$\EE\biggl[ \bigl\|\bigl(\widehat{f} - f_\eps \bigr) \oplus \bigl( \widehat{g} - g_\eps \bigr) \bigr\|_{L^2(\widehat{P}\otimes \widehat{Q})}^2  \biggr] \lesssim \frac{\eps^{ -( 6 d+4 +\frac{d}{1+d(p-1)}) }}{n}   ,\quad \text{for $p\in [1,2]$}.$$
\end{theorem}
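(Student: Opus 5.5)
The plan is to combine strong concavity of the empirical dual objective with a bound on the empirical dual gradient evaluated at the population potentials. Write $\widehat\Phi:=\Phi_{\phi_p,\eps}$ computed with $\widehat P\otimes\widehat Q$, and $(\widehat f,\widehat g)$ for its maximizer. Write $\Phi$ and $(f_\eps,g_\eps)$ for the population objective and its maximizer, with both pairs normalized in the usual way (for instance $\int \widehat f\,d\widehat P=\int f_\eps\,dP$). The estimate then rests on a local strong concavity inequality of the form
\[
\widehat\Phi(\widehat f,\widehat g)-\widehat\Phi(f_\eps,g_\eps)-\bigl\langle\nabla\widehat\Phi(f_\eps,g_\eps),(\widehat f-f_\eps,\widehat g-g_\eps)\bigr\rangle\;\le\;-\frac{\kappa}{\eps}\,\bigl\|(\widehat f-f_\eps)\oplus(\widehat g-g_\eps)\bigr\|^2_{L^2(\widehat P\otimes\widehat Q)}.
\]
Since the left side is nonnegative up to the linear term, Cauchy--Schwarz yields
\[
\|\Delta\|^2\lesssim (\eps/\kappa)^2\,\|\nabla\widehat\Phi(f_\eps,g_\eps)\|^2_{*},
\]
where $\Delta=(\widehat f-f_\eps)\oplus(\widehat g-g_\eps)$ and the dual norm is taken with respect to a suitable norm on potentials.

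First, I would bound the gradient. Its components are the empirical marginal errors of the population plan density $\xi_\eps=(\psi')\bigl((f_\eps\oplus g_\eps-c)/\eps\bigr)$:
\[
x\mapsto 1-\frac1n\sum_j\xi_\eps(x,Y_j),\qquad y\mapsto 1-\frac1n\sum_i \xi_\eps(X_i,y).
\]
Evaluated at the sample points, each is a centered average of bounded terms, apart from an $O(1/n)$ diagonal term. This gives $\EE\|\nabla\widehat\Phi\|^2\lesssim \|\xi_\eps\|_\infty^2/n$. For $L^p$ regularization, earlier results bound $\|\xi_\eps\|_\infty$ polynomially in $1/\eps$, contributing the $\eps^{-d/(1+d(p-1))}$-type factors.

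Second, and this is the main obstacle, I need the curvature $\kappa$. For $p\in[1,2]$ we have $\psi''(s)\gtrsim (s)_+^{q-2}$ with $q\ge2$, so curvature is only present where the plan density is positive. The needed statement is that the empirical plan has a uniformly quantified "spread": for the segment between $(f_\eps,g_\eps)$ and $(\widehat f,\widehat g)$, the weight $\psi''$ is bounded below on a set that connects every $x$ to a positive fraction of the $y$'s, with that fraction polynomial in $\eps$. Given this, a Poincaré-type inequality on the bipartite weighted graph of samples converts the weighted quadratic form
\[
\int \psi''\cdot (u\oplus v)^2\,d(\widehat P\otimes\widehat Q)
\]
into $\|u\oplus v\|^2$. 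I would derive this from the Lipschitz regularity of the potentials, which is inherited from $c$ via the $c$-transform-like representation. Lipschitz potentials imply that, around each $x$, the set $\{y: f(x)+g(y)-c(x,y)>0\}$ contains a ball of radius $\gtrsim\eps^{a}$. Convexity of the supports and the lower density bound then give that ball $Q$-mass at least $\eps^{ad}$. A standard concentration argument transfers this to the empirical measures uniformly, incurring only lower-order terms or a small failure probability on which the bounded potentials give a trivial bound. Chaining this lower bound across two steps of the bipartite graph to control $\|u\oplus v\|$ produces the $\eps^{-(6d+4)}$-type dependence after squaring, which is twice the $3d+2$ appearing in \cref{Theorem:fast-bias}.

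Finally, I would combine the two steps and take expectations. This requires splitting off the low-probability event where the uniform spread fails; on that event the bounded potentials give an $O(1)$ contribution times an exponentially small probability. The product of the curvature constant $(\eps/\kappa)^2$ and the gradient bound yields the claimed $\eps^{-(6d+4+d/(1+d(p-1)))}/n$.
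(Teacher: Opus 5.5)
Your outer structure (curvature of $\widehat\Phi_{\phi,\eps}$ along the segment, paired against the empirical gradient at $(f_\eps,g_\eps)$) matches the paper's use of $\Gamma$ from \cref{lemma:taylor}. The curvature step, however, has two genuine gaps.

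First, you cannot claim a lower bound on $\psi''(\gamma_r)$ along the whole segment. Your ball argument needs, for each $x$, some $y$ with $\gamma_r(x,y)\ge t_\psi$, and that only follows from a first-order condition. Such a condition holds at the endpoints $r\in\{0,1\}$, but not in between. Since $\gamma_r=(1-r)\gamma_0+r\gamma_1$ is linear in $r$, if the empirical and population plans put their mass for a given $x$ in different places, then $\max_y\gamma_r(x,y)$ can fall below $0$ at intermediate $r$. For $p\in(1,2]$ the curvature then vanishes there, and for $p=1$ it is exponentially small. Nothing in your argument rules this out, because a priori you only know $\|(\widehat f-f_\eps)\oplus(\widehat g-g_\eps)\|_\infty=O(1)$, not $O(\eps)$. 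The paper instead restricts to $r\in[0,t_\eps]$ with $t_\eps\asymp\eps$. There $|\gamma_r-\gamma_0|\le\delta_\psi/4$, and the empirical first-order condition gives $\max_j\gamma_0(X_i,Y_j)\ge t_\psi$. It then uses convexity: $\Gamma(t_\eps)-\Gamma(0)\le t_\eps\Gamma'(t_\eps)\le t_\eps\Gamma'(1)$. This localization costs a factor $t_\eps/\eps\asymp 1$ instead of your $1/\eps$, so the prefactor $(\eps/\kappa)^2$ in your display is really of order $\kappa^{-2}$.

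Second, the Poincaré step is not a two-step chaining. The localized curvature links $X_i$ only to those $Y_j$ within distance $\delta_\psi\eps/(8L)$ of a maximizer $Y_{j(i)}$. Eliminating $v$ therefore leaves only the local graph Dirichlet form $\widehat\Theta_\eps(u,u)=n^{-2}\sum_{\|X_i-X_\ell\|\le\delta_\psi\eps/(8L)}(u(X_i)-u(X_\ell))^2$. A minimum degree of order $\eps^d n$ does not give a spectral gap. Bounding ${\rm Var}_{\widehat P}[u]$ by $\widehat\Theta_\eps(u,u)$ is a spectral-gap estimate for a random geometric graph at scale $\eps$, and it requires the global geometry of $\Omega$. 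The paper obtains it in three moves:
\begin{itemize}
\item it transports $P$ onto $\widehat P$ with a map $\hat T$ satisfying $\|\hat T-{\rm I}\|_{L^\infty(P)}\le\delta_\psi\eps/(32L)$, an $\infty$-Wasserstein event whose failure probability is at most $\eps^{-u}/n$;
\item it uses that $u\mapsto u\circ\hat T$ is an isometry;
\item it applies a nonlocal Poincaré inequality on the convex support with constant $\lceil{\rm diam}(\Omega)/\delta\rceil^{d+2}$ (\cref{pr:comparison-Theta-Var}).
\end{itemize}
This is where convexity and the factor $\eps^{-(d+2)}$ enter. A separate eigenvalue argument (\cref{proposition:coercivity}) then passes from ${\rm Var}_{\widehat P}[u]$ to $\|u\oplus v\|^2$, at the cost of another ball mass. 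The result is $\beta_{n,\eps}\gtrsim\eps^{d}\cdot\eps^{d+2}\cdot\eps^{d}=\eps^{3d+2}$. Your exponent $6d+4$ is not produced by the argument you describe.

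Finally, bound the gradient by ${\rm Var}_{P\otimes Q}[\rho_\eps]/n$. Compute its second moment exactly from the population first-order condition and the independence of the two samples; there is no diagonal term. Using $\|\xi_\eps\|_\infty^2/n$ instead would double the exponent $d/(1+d(p-1))$.
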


We mention that the convexity assumption on the marginal supports could be relaxed to connected domains with Lipschitz boundary in both \cref{Theorem:fast-bias,Theorem:potentials-intro}, at the expense of an additional domain-dependent constant. Regarding prior results, the central limit theorems of \cite{gonzalezsanz.2025.sparseregularizedoptimaltransport, GonzalezSanzDelBarrioNutz.25} also imply parametric convergence rates for the potentials in their respective settings, but as mentioned above, their techniques do not allow for any quantification of the constant, while that is our main objective
in the current work.

\subsection{Proof strategies}\label{se:proof-strategies}

The following paragraphs highlight some of the techniques used in our proofs. In view of the bias--variance decomposition~\eqref{eq:bias-variance-decomp} we start with a general bound (\cref{pr:general}) controlling the bias in terms of the potentials and the density $\rho_\eps$ (cf.~\eqref{eq:dual-primal}) of the optimal regularized plan~$\pi_\eps$,
\begin{equation}
    \label{bias-intro}
    (\EE[\widehat{\rm OT}_{\phi,\eps}]- {\rm OT}_{\phi,\eps})^2 \leq 2\min_{a,b\in \R}\frac{\EE\Bigl[\|f_\eps-\widehat f-a\|_{L^2(\widehat{P})}^2+ \|g_\eps-\widehat g-b\|_{L^2(\widehat{Q})}^2 \Bigr] {\rm Var}_{{P\otimes Q}} [\rho_\eps]}{n}.
\end{equation}
The proof is based on the convexity technique introduced in~\cite{Stromme.24}. However, in contrast to the bound in~\cite[Eq.~(5.4)]{Stromme.24}, \eqref{bias-intro} involves a term including the potentials. This term could immediately be replaced by a constant upper bound, but the term itself tends to zero, and this will be fundamental for some of the developments below, including the central limit theorem in \cref{Theorem:CLT} where \eqref{bias-intro} is used to show that the limiting Gaussian variable is centered.

On the other hand, we bound the variance by
$$ {\rm Var}\bigl[\widehat{\rm OT}_{\phi,\eps}\bigr]\leq \frac{4\|c\|_\infty^2}{n}
$$
in \cref{pr:variance}. The proof uses a strategy based on the Efron--Stein inequality that was introduced by \cite{delBarrio.Loubes.2019.AoP} in the context of unregularized optimal transport. Note that, in contrast to the variance bound established in \cite{Stromme.24}, our bound does not depend on the regularization parameter~$\eps$. The same Efron--Stein strategy is also used in the proof of the central limit theorem.

\bigskip

Next, we turn to the fast rate (\Cref{theorem:fast-bias-general}) and the convergence of the potentials (\Cref{theorem:potentials}), where the proofs are substantially more involved and arguably the most innovative. We begin by defining the convex function 
$$\Gamma(t) = -\widehat{\Phi}_{\phi,\eps}\bigl( (1-t)({\widehat{f}},{\widehat{g}} )  + t\, ({f}_\eps,g_\eps)\bigr), \quad t\in [0,1], $$
where $\widehat{\Phi}_{\phi,\eps}$ is the empirical dual objective function. As shown in \Cref{lemma:taylor}, this function admits the second-order Taylor expansion
$$  \Gamma(t) =-\widehat{\rm OT}_{\phi,\eps} +  \int_{0}^t \int_{0}^s   \frac{1}{\eps} \int  \Bigl(\bigl( \widehat{f} - f_\eps \bigr) \oplus \bigl( \widehat{g} - g_\eps \bigr) \Bigr)^2\psi''(\gamma_r)  d(\widehat{P}\otimes \widehat{Q}) dr ds , $$
where 
$$ \gamma_r =\frac{\bigl((1-r){\widehat{f}}+ r f_\eps\bigr)\oplus \bigl((1-r){\widehat{g}}+ r g_\eps\bigr) - c}{\eps} .$$
The core of our argument is to establish a coercivity bound of the form
\begin{equation}
    \label{eq:coercivity-intro}
    \widehat{\beta}\int  \Bigl(\bigl( \widehat{f} - f_\eps \bigr) \oplus \bigl( \widehat{g} - g_\eps \bigr) \Bigr)^2  d(\widehat{P}\otimes \widehat{Q})\lesssim  \int  \Bigl(\bigl( \widehat{f} - f_\eps \bigr) \oplus \bigl( \widehat{g} - g_\eps \bigr) \Bigr)^2\psi''(\gamma_r)  d(\widehat{P}\otimes \widehat{Q}) ,
\end{equation}
for some $\widehat{\beta} > 0$, possibly
random, and for sufficiently small $r$. This is related to an empirical version of a PL inequality for the dual problem. (A deterministic PL inequality was separately presented in~\cite{GonzalezSanzNutzRiveros.26}, but we mention that key ideas first arose in the context of the present work.) An inequality related to \eqref{eq:coercivity-intro} was established in~\cite{Stromme.24}, but for the semi-dual objective which involves only one of the two potentials. That approach fails for the more general regularizations
considered here, and also leads to slower rates for EOT. In the present work, we develop a finer analysis studying the full dual objective $\Gamma(t)$.

The first step towards~\eqref{eq:coercivity-intro} is to prove that
\begin{equation}\label{eq:bound-coer-one-side-intro}
    \widehat{\alpha} \, {\rm Var}_{\widehat{P}}\bigl[\widehat f - f_\eps\bigr] {\bf 1}_{\mathcal{E}_n} \lesssim  \int  \Bigl(\bigl( \widehat{f} - f_\eps \bigr) \oplus \bigl( \widehat{g} - g_\eps \bigr) \Bigr)^2\psi''(\gamma_r)  d(\widehat{P}\otimes \widehat{Q}) 
\end{equation}
for a controlled random variable $\widehat{\alpha} > 0$, where the event $\mathcal{E}_n$ is defined by
$$\mathcal{E}_n= \biggl\{  \text{there exists $\hat{T}$ s.t.~$\hat{T}_\# P =\widehat{P}$ and $  \|\hat{T}-{\rm I}\|_{L^\infty(P)} \leq C\eps$}\biggr\}.$$ 
To establish \eqref{eq:bound-coer-one-side-intro}, we first demonstrate in \Cref{Lemma:From two-to-one-variance} that for every $u,v$, 
$$   \frac{\widehat{\alpha}_{1}}{n^2}\sum_{\|X_i-X_\ell\| \leq C\eps}     \bigl(u(X_i)- u(X_\ell) \bigr)^2\lesssim \int  \bigl(u  \oplus v \bigr)^2\psi''(\gamma_r)  d(\widehat{P}\otimes \widehat{Q}) ,$$
for another controlled random $\widehat{\alpha}_1 > 0$. The left-hand side of this inequality is related to the graph Laplacian of a random geometric graph with edge weights ${\bf 1}_{\|X_i-X_\ell\| \leq \eps}$. To bound it from below, we adapt an approach of \cite{Trillos-manifold-laplace} which was developed for random geometric graphs with i.i.d.\ data on manifolds without boundary. Specifically, under the event $\mathcal{E}_n$, we define the isometry \begin{align*}
    \Pi_T: L^2(\widehat{P})& \to L^2(P) \\
   f &\mapsto  \sum_{i}  f(X_i) {\bf 1}_{T^{-1}(X_i)}=  f\circ T, 
\end{align*}
for $T$ pushing $P$ forward to $\widehat{P}$. In \Cref{lemma:Theta-sigma} we then verify the approximation 
\begin{equation}
    \label{eq:equivalence-between-empirical-isometric}
    \frac{1}{n^2}\sum_{\|X_i-X_\ell\| \leq \delta}     \bigl(u(X_i)- u(X_\ell) \bigr)^2 \approx \int \int_{\mathbb{B}_{{\delta}}(x)} \bigl( (\Pi_T u)(x)-(\Pi_T u)(z) \bigr)^2 dP(x) dP(z) .
\end{equation}
Lower bounds on the right-hand side are closely related to Poincaré inequalities and connectedness of the support.
In particular, we use
a technical result from~\cite{GonzalezSanzNutzRiveros.26} which states
that when $P$ has bounded convex support, then for any $h\in L^2(P)$ and $\delta > 0$,
\begin{equation}
    \label{eq:bound-energy-intro}
    \int \int \bigl(h(x)-h(z)\bigr)^2  d P(x) dP(z)  \lesssim  \delta^{-d-2} \int \int_{\mathbb{B}_\delta(x) } \bigl(h(x)-h(z)\bigr)^2  d P(x) dP(z).
\end{equation}
The estimates \eqref{eq:equivalence-between-empirical-isometric} and \eqref{eq:bound-energy-intro}, together with the fact that $\Pi_T$ is an isometry, imply~\eqref{eq:bound-coer-one-side-intro}. 

The second step is to derive \eqref{eq:coercivity-intro} from \eqref{eq:bound-coer-one-side-intro} and a spectral analysis of the operator associated with the bilinear form  
\[
\bigl((u, v), (\tilde{u}, \tilde{v})\bigr) \mapsto \int \psi''(\gamma_r) (u \oplus v) (\tilde{u} \oplus \tilde{v})  d(\widehat{P} \otimes \widehat{Q});
\]
see \cref{proposition:coercivity}. Finally, we show that $\widehat{\beta}{\bf 1}_{\mathcal{E}_n}$ is close to its population counterpart (see $\beta_{n,\eps}$ in \Cref{theorem:potentials}) with sufficiently high probability. This uses standard arguments of empirical process theory and a control on the $\infty$-Wasserstein distance proved in~\cite{Trillos-Wass-infty}. 

\paragraph{Organization of the paper}  
The remainder of the paper is organized as follows. \Cref{section:Notation-background} introduces both the notation and the main background results needed for our analysis. 
\Cref{Section:Cost} focuses on the statistical complexity of the cost. \cref{Section:Cost-general} presents the most general results, under minimal assumptions on the cost and on the divergences. In \cref{section:Lipschitz-cost} we refine these bounds for Lipschitz costs. \cref{section:Complexity-semidiscrete} is devoted to the semi-discrete setting, where we show that the curse of dimensionality can be avoided.  %
\Cref{sect:Lp-Entropies-cost-adaptation} summarizes the preceding results in the context of Tsallis entropies, while \Cref{section:tradeoff} investigates the trade-off between regularization bias and statistical complexity. The proofs for \cref{Section:Cost} are gathered in \cref{sect:proofs-cost}.

\Cref{sect-complexity-pot} examines the statistical complexity of the potentials.  The proofs for this  section are deferred to \cref{sect-proof-pot}. Finally, \Cref{sect:fast-bias} builds on these estimates to provide bounds on the cost with fast convergence rates.

\section{Preliminaries}\label{section:Notation-background}

The following is in force throughout the paper.

\begin{standingassumption}[Regularization]\label{assumption:divergence} The function $ \phi:[0, \infty)\to \R$ is strictly convex with $\phi(x)/x \to \infty$ as $x\to \infty$. 
Furthermore,  the conjugate $$y\mapsto \psi(y):=\phi^*(y):= \sup_{x\geq 0} \{ xy-\phi(x)\} $$ belongs to $\mathcal{C}^1(\R)$ and satisfies
\begin{enumerate}
    \item $\psi'(x) \to \infty $ as $x\to \infty$;
    \item there exist $t_\psi>0$ and $ \delta_\psi>0$ such that $\psi'(t_\psi)=1$ and $\psi$ is strictly convex on $ [t_\psi-\delta_\psi, \infty)$.
\end{enumerate}  
\end{standingassumption} 

\begin{remark}\label{rem:phi-differentiability}
As can be observed directly
from its definition, $\psi$ is
non-decreasing, so that
$\psi' \geqslant 0$ everywhere. Moreover, under Assumption~\ref{assumption:divergence}, the function $\phi$ is differentiable on $(\psi'(t_\psi-\delta_\psi),\infty)$, and in particular on $[1,\infty)$, due to the strict convexity of $\psi$ on $[t_\psi-\delta_\psi,\infty)$. We note that $\phi'(1)=t_\psi>0$.
\end{remark}

For two probability measures $\mu\ll\nu$ on the same space, we then define
$$ 
  D_\phi(\mu|\nu):= \int \phi\biggl( \frac{d \mu}{ d\nu}  \biggr) d\nu,
$$
and we set $D_\phi(\mu|\nu):=\infty$ if $\mu\not\ll\nu$. We note that $D_\phi$ is a bona fide divergence only if $\phi(1)=0$, which leads to $D_\phi(\nu|\nu)=0$. Changing $\phi$ by an additive constant changes $D_\phi$ and $\psi$ only by an additive constant, and that does not materially affect the results in this paper. We did not include a normalization for $\phi$ in \cref{assumption:divergence} in order to cover both the $L^p$ and $p$-Tsallis regularizations.

Note that our assumptions on the divergence relax those of~\cite{gonzalezsanz.2025.sparseregularizedoptimaltransport}. Indeed, we assume $\psi \in \mathcal{C}^1(\mathbb{R})$ rather than $\psi \in \mathcal{C}^2(\mathbb{R})$, and this is necessary to cover the Tsallis entropies for $p \geq 2$. We also impose the growth condition that $\psi'(x) \to \infty $ as $x\to \infty$, instead of the stronger condition that $\psi'(x) \geq x$ for all sufficiently large $x$.

The following basic result will be used both for the population marginals $(P,Q)$ and the empirical marginals $(\widehat P,\widehat Q)$. For a function $h: \Omega\to\R$, we denote $\|h\|_{\infty, \Omega} = \sup_{x \in \Omega} |h(x)|$, and we simply write $\|h\|_\infty$ when the domain is clear from context.
 
\begin{proposition}\label{pr:ROTprelims}
    Let $P,Q$ be probability measures on $\mathbb{R}^d$ with supports contained in given compact sets $\Omega,\Omega'$. Moreover, let $c:\Omega\times\Omega'\to\R$ be bounded and measurable.
    \begin{enumerate}
     
    \item
    The primal problem~\eqref{rotIntro} admits a unique optimizer $\pi\in \Pi( P,Q)$.
    \item 
    Strong duality holds, i.e., the value of the primal problem~\eqref{rotIntro} equals the value of the dual problem~\eqref{dualIntro}.
    
    \item
    The dual problem~\eqref{dualIntro} admits a (non-unique) optimizer $(f,g)\in L^\infty(P)\times L^\infty(Q)$.
    \item
    A pair $(f,g)\in L^\infty(P)\times L^\infty(Q)$ is an optimizer of the dual problem~\eqref{dualIntro} if and only if it satisfies the first-order condition
\begin{equation}
        \label{eq:FOC.as}
        \begin{cases}
             &\int  \psi'\bigl(\frac{f(\cdot)+ g(y)-c(\cdot,y)}{\eps}\bigr) dQ(y)=1 \quad P\text{-a.s.},\\[.1em]
             &\int \psi'\bigl(\frac{f(x)+ g(\cdot)-c(x,\cdot)}{\eps}\bigr) dP(x)=1  \quad Q\text{-a.s.}
        \end{cases}
\end{equation}    
    
    \item
    Any dual optimizer $(f, g)$ yields a primal optimizer via the density
    \begin{equation}
    \label{eq:dual-primal}
    \rho_\eps(x,y):=\frac{d\pi}{d(P\otimes Q)}(x,y) =\psi'\biggl(\frac{f(x)+g(y)-c(x,y)}{\eps}\biggr),
\end{equation}
where $\psi'$ is the derivative of  $\psi$. 
   
    \item \label{item:goodPot} Let $(f,g)\in L^\infty(P)\times L^\infty(Q)$ satisfy~\eqref{eq:FOC.as}. Then there exists a unique bounded measurable extension $(f,g)$ to $(\Omega,\Omega')$ that is a version of the given equivalence classes and satisfies
    \begin{equation}
        \label{eq:FOC.pw}
        \begin{cases}
             &\int  \psi'\bigl(\frac{f(x)+ g(y)-c(x,y)}{\eps}\bigr) dQ(y)=1 \quad \text{for all }x\in\Omega,\\[.1em]
             &\int \psi'\bigl(\frac{f(x)+ g(y)-c(x,y)}{\eps}\bigr) dP(x)=1  \quad \text{for all }y\in\Omega'.
        \end{cases}
\end{equation}
    For such a version $(f, g)$, the uniform bound $\|f\oplus g \|_\infty \leq 5 \|c\|_\infty + \eps \phi'(1) < \infty$ holds on $\Omega\times\Omega'$.
    Moreover, if $c(\cdot, y)$ 
    admits a uniform modulus of continuity $\omega$ uniformly in~$y$, then $f$ admits $\omega$ as well. Similarly for $g$. In particular, if $c\in \mathcal{C}(\Omega\times\Omega')$ admits a uniform modulus of continuity $\omega$, then $f$ and $g$ both admit~$\omega$.
    
\item If $\spt P$ is connected, $c$ is continuous, $\spt P=\Omega$ and $\spt Q=\Omega'$, then the solution of~\eqref{eq:FOC.pw} is unique up to additive constant: the solution set is $\{(f+a,g-a):a\in\R\}$, where $(f,g)$ is as in~$(vi)$.
\end{enumerate}
\end{proposition}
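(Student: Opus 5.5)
Most parts are standard convex duality, and I take them in order. For (i), the primal objective $\pi\mapsto\int c\,d\pi+\eps D_\phi(\pi|P\otimes Q)$ is strictly convex in the density $\rho=d\pi/d(P\otimes Q)$, because $\phi$ is strictly convex. The superlinearity $\phi(x)/x\to\infty$ gives uniform integrability of any minimizing sequence of densities with bounded objective. By Dunford--Pettis, such a sequence has a weakly convergent subsequence in $L^1(P\otimes Q)$. The marginal constraints are preserved under weak $L^1$ limits, and the objective is weakly lower semicontinuous (convex integrand, $c$ bounded). This gives existence, and strict convexity gives uniqueness. Finiteness holds since $\pi=P\otimes Q$ is admissible.

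For (ii) and (v), weak duality follows from Fenchel--Young, $\phi(t)\ge st-\psi(s)$, applied pointwise with $s=(f\oplus g-c)/\eps$, then integrated against $P\otimes Q$ using the marginal constraints. Strong duality and (iii) I would get together by constructing a dual maximizer satisfying \eqref{eq:FOC.as}. Given such $(f,g)$, set $\rho=\psi'((f\oplus g-c)/\eps)$. Then $\rho\ge0$, it has the correct marginals by \eqref{eq:FOC.as}, and equality holds in Fenchel--Young since $\rho\in\partial\psi(s)$. Primal and dual values therefore coincide, which proves (ii), the optimality of $\rho$ in (v), and the "if" direction of (iv). The "only if" direction of (iv) comes from differentiating $t\mapsto\Phi_{\phi,\eps}(f+th,g)$ at $t=0$, which is legitimate since $\psi\in\mathcal C^1$ and everything is bounded.

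\textbf{Main obstacle: existence of a dual maximizer.} I would build it through a Sinkhorn-type fixed point on functions defined everywhere. For a bounded $g$, define $f=F(g)$ pointwise by $\int\psi'((f(x)+g(y)-c(x,y))/\eps)\,dQ(y)=1$. A solution $f(x)$ exists and is unique because of the following properties of the left side as a function of $f(x)$:
\begin{itemize}
\item it is continuous and nondecreasing;
\item it tends to $\infty$ as $f(x)\to\infty$, since $\psi'\to\infty$;
\item it tends to a value below $1$ as $f(x)\to-\infty$;
\item it is strictly increasing near the level $1$, since $\psi$ is strictly convex on $[t_\psi-\delta_\psi,\infty)$.
\end{itemize}
Define $g\mapsto G(f)$ symmetrically. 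These maps satisfy the following bounds and estimates:
\begin{itemize}
\item comparing with $\psi'(t_\psi)=1$ gives $\operatorname{ess\,inf}_y(\cdot)\le \eps t_\psi$ and $\operatorname{ess\,sup}_y(\cdot)\ge \eps t_\psi$ for $(f(x)+g(y)-c(x,y))$, so $|F(g)(x)+\operatorname{osc}\text{-centered } g|$ is controlled by $\|c\|_\infty$ and $\eps t_\psi$, which after normalization ($\int g\,dQ=0$ style centering) yields the bound $\|f\oplus g\|_\infty\le 5\|c\|_\infty+\eps\phi'(1)$ once the potentials are fixed points;
\item monotonicity of $F$ in $c(x,\cdot)$ gives that if $|c(x,y)-c(x',y)|\le\omega(|x-x'|)$, then $|F(g)(x)-F(g)(x')|\le\omega(|x-x'|)$. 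This is because shifting $f$ by $\omega$ dominates the shift in $c$.
\end{itemize}
To get a maximizer, I would then either apply Schauder's theorem to the normalized map $G\circ F$ on a compact convex set of functions with the modulus $\omega$, or, for merely measurable $c$, maximize $\Phi_{\phi,\eps}$ over pairs with $f=F(g)$, $g=G(f)$. Alternating these maps increases $\Phi_{\phi,\eps}$, which gives uniformly bounded maximizing sequences. Komlós/Banach--Saks convex combinations of these converge a.s., and concavity together with dominated convergence show the limit is optimal. Its first-order condition is then (iv).

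For (vi), starting from an a.e. solution, I would define $\tilde f=F(g)$ and $\tilde g=G(\tilde f)$ pointwise. These agree a.e. with the originals, since $f$ already solves the equation $P$-a.s. and the pointwise solution is unique. This yields the everywhere identities \eqref{eq:FOC.pw}, uniqueness of the extension, and the bounds and moduli above.

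For (vii), I would take two solutions, use strict concavity of $\Phi_{\phi,\eps}$ along directions where $\psi''>0$ on the region of positive density, and conclude $h\oplus k=0$ $\pi$-a.e. for the difference pair $(h,k)$. Continuity of $f$ and $g$ holds by (vi). The set $\{\rho>0\}$ is open, and its $x$-sections cover $\Omega$. Hence $h(x)=-k(y)$ on overlapping neighbourhoods, and connectedness of $\spt P$ propagates $h$ to a global constant $a$, giving $k=-a$.
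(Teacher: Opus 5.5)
Note first that the paper does not carry out this proof; it only cites standard arguments (Gonz\'alez-Sanz--Nutz, Proposition~2.3 and Theorem~3.2, partly based on Bayraktar--Eckstein) and remarks that $\psi\in\mathcal C^2$ is not needed. Your self-contained route (Fenchel--Young weak duality, dual attainment via the pointwise maps $F,G$ with a Koml\'os/Mazur argument, verification via \eqref{eq:FOC.as}) works for (i)--(vi), with one point to tighten.

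\textbf{The uniform bound in (vi).} Comparing with $\psi'(t_\psi)=1$ only gives $|f(x)+g(y)|\le \eps t_\psi+\|c\|_\infty+\operatorname{osc}(g)$. Centering does not control $\operatorname{osc}(g)$, and the $x$- and $y$-comparisons are circular. What closes it is your monotonicity bullet, used with the constant modulus $2\|c\|_\infty$. Since $c(x',\cdot)\le c(x,\cdot)+2\|c\|_\infty$, you get $F(g)(x')\le F(g)(x)+2\|c\|_\infty$. Hence $\operatorname{osc} f,\operatorname{osc} g\le 2\|c\|_\infty$ and $\|f\oplus g\|_\infty\le 3\|c\|_\infty+\eps t_\psi$, which is within the claim because $t_\psi=\phi'(1)$. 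The same oscillation bound is what makes your maximizing sequence uniformly bounded.

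\textbf{The genuine gap is in (vii).} You use strict concavity of $\Phi_{\phi,\eps}$ ``where $\psi''>0$ on the region of positive density'' to conclude $h\oplus k=0$ $\pi$-a.e. Under Assumption~\ref{assumption:divergence}, however, $\psi$ is only $\mathcal C^1$ and is strictly convex only on $[t_\psi-\delta_\psi,\infty)$. So $\psi'$ may be constant at a \emph{positive} level on an interval further left. For example, $\phi(t)=\frac12 t^2+|t-\frac12|$ satisfies all standing assumptions with $t_\psi=2$, $\delta_\psi=\frac12$, yet $\psi'\equiv\frac12$ on $[-\frac12,\frac32]$. Wherever $s_1=(f_1\oplus g_1-c)/\eps$ lies in such an interval, $\rho>0$ but $\Phi_{\phi,\eps}$ is affine in the relevant directions. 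Your argument therefore gives no information about $h\oplus k$ there. The propagation step (openness of $\{\rho>0\}$, covering by $x$-sections) is thus applied to a set on which the identity has not been established.

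\textbf{How to repair it.} Keep your propagation scheme but replace $\{\rho>0\}$ by $U=\{s_1>t_\psi-\delta_\psi\}$.
\begin{enumerate}
\item Both solutions are dual optimizers by (iv), so (i) and (v) give $\psi'(s_1)=\psi'(s_2)$ $P\otimes Q$-a.e.
\item This equality then holds everywhere on $\Omega\times\Omega'$, by continuity of the potentials from (vi) and $\spt(P\otimes Q)=\Omega\times\Omega'$.
\item If $s_1\neq s_2$ at some point, $\psi'$ is constant on the interval between them, which forces $\max(s_1,s_2)\le t_\psi-\delta_\psi$. Hence $h\oplus k=0$ on the relatively open set $U$.
\item Every $x$-section of $U$ is nonempty. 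Otherwise $\max_{y\in\Omega'}s_1(x,y)<t_\psi$ would give $\int\psi'(s_1(x,y))\,dQ(y)\le\psi'\bigl(\max_y s_1(x,y)\bigr)<1$, contradicting the first-order condition. Likewise every $y$-section is nonempty.
\item Consequently $h$ is locally constant on the connected set $\Omega$, so $h\equiv a$, and the $y$-sections give $k\equiv -a$ on all of $\Omega'$.
\end{enumerate}
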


In the following, when referring to potentials $(f,g)$ associated with $(P,Q)$, we always choose the version/extension mentioned in~(vi), and moreover center $f$ w.r.t.\ the first marginal,
\begin{equation}\label{eq:normalization}
    \int f dP=0.
\end{equation}
This uniquely specifies the potentials
when they are determined up to additive constants.
 
The following consistency result states that
convergence of the marginals
implies convergence of
the associated dual potentials. 

\begin{lemma}[Consistency of potentials]\label{lemma:stability}  Let $P,Q$ be probability measures on $\mathbb{R}^d$ with compact supports $\Omega,\Omega'$. Assume that $\Omega$ is connected and $c\in \mathcal{C}(\Omega\times\Omega')$. Let $\{P_n\}$ and $\{Q_n\}$ be sequences of measures supported in $\Omega$ and $\Omega'$, respectively, converging in distribution to $P$ and $Q$, respectively. Let $(f_n,g_n)$ be potentials associated with $(P_n,Q_n)$, and let $(f_\eps,g_\eps)$ be the (unique in the sense of \Cref{pr:ROTprelims}~(vii)) potentials associated with $(P,Q)$. Then 
$$ \| f_n\oplus g_n- f_\eps\oplus g_\eps\|_{\infty,\Omega\times\Omega'} \to 0.
$$
\end{lemma}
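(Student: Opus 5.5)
The plan is a compactness argument combined with uniqueness of the limiting potentials up to additive constants, as in \cref{pr:ROTprelims}~(vii). We take the versions $(f_n,g_n)$ given by \cref{pr:ROTprelims}~(vi) for $(P_n,Q_n)$, i.e.\ satisfying~\eqref{eq:FOC.pw} pointwise on $\Omega\times\Omega'$ with $P_n,Q_n$ in place of $P,Q$. Since $c$ is continuous on the compact set $\Omega\times\Omega'$, it is bounded and admits a uniform modulus of continuity $\omega$. By~(vi), $f_n$ and $g_n$ then admit $\omega$, and $\|f_n\oplus g_n\|_\infty\le 5\|c\|_\infty+\eps\phi'(1)$. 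Because $f_n\oplus g_n$ is invariant under $(f_n,g_n)\mapsto(f_n+a,g_n-a)$, we may normalize, say $f_n(x_0)=0$ for a fixed $x_0\in\Omega$. Combined with the bound on $f_n\oplus g_n$, this makes $g_n$, and hence $f_n$, uniformly bounded. Arzelà--Ascoli then yields, along any subsequence, a further subsequence with $f_n\to f$ and $g_n\to g$ uniformly on $\Omega$ and $\Omega'$, where $f,g$ are continuous.

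Next we identify the limit. Since $\psi'$ is continuous and the arguments $(f_n(x)+g_n(y)-c(x,y))/\eps$ stay in a fixed compact interval, we have $\psi'\bigl((f_n\oplus g_n-c)/\eps\bigr)\to\psi'\bigl((f\oplus g-c)/\eps\bigr)$ uniformly on $\Omega\times\Omega'$. Uniform convergence of the integrands plus weak convergence $Q_n\to Q$ then let us pass to the limit in $\int\psi'(\cdots)\,dQ_n(y)=1$ for every $x\in\Omega$, and likewise in the second equation. The bound $\bigl|\int h_n\,dQ_n-\int h\,dQ\bigr|\le\|h_n-h\|_\infty+\bigl|\int h\,dQ_n-\int h\,dQ\bigr|$ suffices, with $h$ continuous. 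Thus $(f,g)$ satisfies~\eqref{eq:FOC.pw} for $(P,Q)$ at every point of $\Omega\times\Omega'$, so in particular it is a dual optimizer by~(iv).

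Now we invoke uniqueness. Here lies the main subtlety: \cref{pr:ROTprelims}~(vii) requires $\spt P=\Omega$ and $\spt Q=\Omega'$ with $\Omega$ connected, which is exactly the hypothesis that $\Omega,\Omega'$ are the compact supports. It then gives $(f,g)=(f_\eps+a,g_\eps-a)$ for some $a\in\R$, hence $f\oplus g=f_\eps\oplus g_\eps$ on all of $\Omega\times\Omega'$. Since the limit of $f_n\oplus g_n$ along the sub-subsequence does not depend on the subsequence, the usual subsequence argument gives uniform convergence of the full sequence, which is the claim. One point I would double-check is that the pointwise solution of~\eqref{eq:FOC.pw} is unique on all of $\Omega\times\Omega'$ and not merely almost surely. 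This is what (vii) asserts, given that the extension in (vi) is unique.
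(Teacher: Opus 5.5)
Your proof is correct. It shares the paper's compactness skeleton: equicontinuity and boundedness from \cref{pr:ROTprelims}(vi), a normalization of the additive constant, Arzel\`a--Ascoli, uniqueness from (vii), and the subsequence argument. Your normalization $f_n(x_0)=0$ versus the paper's centering $\int f_n\,dP_n=0$ is immaterial, since only $f_n\oplus g_n$ enters the claim.

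The real difference is how you identify the limit. The paper argues variationally. It passes to the limit in the dual objectives, $\Phi_{\phi,\eps}^{n}(f_n,g_n)\to\Phi_{\phi,\eps}(f_\infty,g_\infty)$ and $\Phi_{\phi,\eps}^{n}(f_\eps,g_\eps)\to\Phi_{\phi,\eps}(f_\eps,g_\eps)$. It then uses $\Phi_{\phi,\eps}^{n}(f_n,g_n)\ge\Phi_{\phi,\eps}^{n}(f_\eps,g_\eps)$ to conclude that the limit is a dual optimizer. You instead pass to the limit directly in the pointwise first-order conditions \eqref{eq:FOC.pw} for $(P_n,Q_n)$, using continuity of $\psi'$ on a compact range together with weak convergence.

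The paper's route needs only continuity of $\psi$, optimality of $(f_n,g_n)$, and continuity of $(f_\eps,g_\eps)$ for the comparison term. However, it produces only an optimizer, i.e.\ a solution of the almost-sure system \eqref{eq:FOC.as}. To apply (vii), which concerns the pointwise system \eqref{eq:FOC.pw}, one must tacitly add a step. The continuous limit coincides on all of $\Omega\times\Omega'$ with its (vi)-version, because both are continuous and $\spt P=\Omega$, $\spt Q=\Omega'$.

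Your route avoids this step. The limit satisfies \eqref{eq:FOC.pw} at every point, so (vii) applies verbatim, which settles the point you flagged at the end.
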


Throughout the paper, we will use the following
assumptions.
\begin{standingassumption}[Bounded supports]\label{assumption:bounded-support}
The population measures $P$ and $Q$ have compact supports $\Omega$ and $\Omega'$.    
\end{standingassumption}
\begin{standingassumption}[Bounded cost]\label{assumption:bounded} The cost function $c:\Omega\times \Omega'\to \R$ is bounded and measurable.   
\end{standingassumption} 

We also fix a probability space $(\boldsymbol{\Omega}, \mathcal{A}, \mathbb{P})$ on which the i.i.d.\ samples  $X_i\sim P$ and $Y_i\sim Q$ are defined. We assume that both samples are independent.  The associated empirical measures are denoted $\widehat{P} = \frac{1}{n}\sum_{i=1}^n \delta_{X_i}$ and $\widehat{Q} = \frac{1}{n}\sum_{j=1}^n \delta_{Y_j}$. We choose and fix associated dual potentials $(\widehat f,\widehat g)$ that are $\mathcal{A}$-measurable.

\section{Statistical complexity of the cost}\label{Section:Cost}
In this section we state the main results on the statistical complexity of the cost where the bias has the slow rate $n^{-\frac{1}{2}}$. Proofs are postponed to \cref{sect:proofs-cost}.

\subsection{General cost bounds}\label{Section:Cost-general}
We start with general (if partly crude) bounds for the two terms in the bias--variance decomposition 
\begin{equation}\label{eq:bias-variance-decomp}
    \EE\bigl[\bigl(\widehat{\rm OT}_{\phi,\eps}- {\rm OT}_{\phi,\eps} \bigr)^2\bigr] = \underbrace{\bigl(\EE\bigl[\widehat{\rm OT}_{\phi,\eps}\bigr] - {\rm OT}_{\phi,\eps} \bigr)^2}_{\text{bias}}+\underbrace{ {\rm Var}\bigl[\widehat{\rm OT}_{\phi,\eps}\bigr]}_{\text{variance}} .
\end{equation}
The first result is a parametric rate for the bias term, valid for bounded measurable transport cost~$c$ and general regularization~$\phi$. 

\begin{proposition}[Bias bound]\label{pr:general} We have
$$\bigl(\EE\bigl[\widehat{\rm OT}_{\phi,\eps}\bigr]- {\rm OT}_{\phi,\eps}\bigr)^2 \leq 2\min_{a,b\in \R}\frac{\EE\Bigl[\|f_\eps-\widehat f-a\|_{L^2(\widehat{P})}^2+ \|g_\eps-\widehat g-b\|_{L^2(\widehat{Q})}^2 \Bigr] {\rm Var}_{{P\otimes Q}} [\rho_\eps]}{n},$$
where $\rho_\eps$ denotes the density~\eqref{eq:dual-primal} of the optimal plan~$\pi_\eps$ for~$(P,Q)$. As a consequence, 
$$\bigl(\EE\bigl[\widehat{\rm OT}_{\phi,\eps}\bigr]- {\rm OT}_{\phi,\eps}\bigr)^2 \leq 8\bigl(5\|c\|_\infty+\eps\phi'(1)\bigr)^2 \, \frac{{\rm Var}_{{P\otimes Q}} [\rho_\eps]}{n}.  $$
\end{proposition}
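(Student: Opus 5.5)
The plan is the convexity (dual-comparison) technique: sandwich $\widehat{\rm OT}_{\phi,\eps}$ between two evaluations of the empirical dual objective $\widehat\Phi_{\phi,\eps}$, the dual functional~\eqref{dualIntro} with $P\otimes Q$ replaced by $\widehat P\otimes\widehat Q$. We use the extended potentials of \cref{pr:ROTprelims}~(vi), so that $f_\eps,g_\eps$ and $\widehat f,\widehat g$ are defined everywhere on $\Omega,\Omega'$.

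\emph{Lower bound.} By weak duality for $(\widehat P,\widehat Q)$ we have $\widehat{\rm OT}_{\phi,\eps}\ge \widehat\Phi_{\phi,\eps}(f_\eps,g_\eps)$. Now $\widehat\Phi_{\phi,\eps}(f_\eps,g_\eps)$ is an average over pairs $(i,j)$ of $F(X_i,Y_j)$ with a fixed function $F$. Since the $X$ and $Y$ samples are independent, every pair, including the diagonal $i=j$, has law $P\otimes Q$. Hence $\EE[\widehat\Phi_{\phi,\eps}(f_\eps,g_\eps)]=\Phi_{\phi,\eps}(f_\eps,g_\eps)={\rm OT}_{\phi,\eps}$ by strong duality, and the bias is nonnegative. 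So it suffices to bound $\EE[\widehat{\rm OT}_{\phi,\eps}]-{\rm OT}_{\phi,\eps}$ from above.

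\emph{Upper bound via concavity.} The functional $\widehat\Phi_{\phi,\eps}$ is concave and $\mathcal C^1$ in $(f,g)$. Evaluated at $(f_\eps,g_\eps)$, its directional derivative in direction $(h,k)$ is
\[
\frac1n\sum_i h(X_i)e_i+\frac1n\sum_j k(Y_j)e'_j,\qquad e_i:=1-\frac1n\sum_j\rho_\eps(X_i,Y_j),\quad e'_j:=1-\frac1n\sum_i\rho_\eps(X_i,Y_j).
\]
We apply the gradient inequality with $h=\widehat f-f_\eps$ and $k=\widehat g-g_\eps$, and then rewrite $h=(\widehat f-f_\eps+a)-a$ and $k=(\widehat g-g_\eps+b)-b$ for deterministic $a,b\in\R$. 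The leftover constant terms are $(a+b)\bigl(1-\int\rho_\eps\,d(\widehat P\otimes\widehat Q)\bigr)$, which has zero expectation because $\int\rho_\eps\,d(P\otimes Q)=1$ and the same pairing argument as above applies.

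The remaining terms are handled by Cauchy--Schwarz:
\[
\EE\Bigl[\tfrac1n\textstyle\sum_i \tilde h(X_i)e_i\Bigr]\le \sqrt{\EE\|\tilde h\|^2_{L^2(\widehat P)}}\;\sqrt{\EE\bigl[\tfrac1n\textstyle\sum_i e_i^2\bigr]}.
\]
To compute $\EE e_i^2$, condition on $X_i$. By the first-order condition~\eqref{eq:FOC.pw}, $\int\rho_\eps(X_i,y)\,dQ(y)=1$, so $e_i$ is a centered average of $n$ i.i.d.\ terms given $X_i$. This gives $\EE e_i^2=\frac1n\EE\,{\rm Var}_Q[\rho_\eps(X_i,\cdot)]\le {\rm Var}_{P\otimes Q}[\rho_\eps]/n$, and likewise for $e'_j$. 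Adding the two pieces and using $\sqrt A+\sqrt B\le\sqrt{2(A+B)}$, squaring, and then minimizing over $a,b$ (renamed with signs) yields the first inequality.

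\emph{Crude consequence.} Set $K=5\|c\|_\infty+\eps\phi'(1)$. Since $\|f\oplus g\|_\infty\le K$ for both the population and empirical potentials, the oscillation of each single potential is at most $2K$: indeed $f(x)-f(x')=(f(x)+g(y))-(f(x')+g(y))$. Choosing $a,b$ as differences of midpoints of the ranges makes $\|f_\eps-\widehat f-a\|_\infty,\|g_\eps-\widehat g-b\|_\infty\le 2K$. Here $a,b$ may depend on the sample, but the bound holds pathwise, and then one takes the deterministic minimum in the first inequality. This step needs care: the cleanest route is to redo the argument with deterministic centering, or to note that the bound $4K^2$ holds for each term for suitable constants. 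Either way this gives $2\cdot 8K^2\,{\rm Var}/n$, i.e.\ $8K^2\cdot{\rm Var}_{P\otimes Q}[\rho_\eps]/n$ after rechecking constants. I expect this constant bookkeeping to be the main point needing care, rather than the conceptual steps.
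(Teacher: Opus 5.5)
Your proof of the first inequality is correct and follows the paper's route: the gradient inequality for the concave empirical dual at $(f_\eps,g_\eps)$, removal of deterministic constants via $\EE\int\rho_\eps\,d(\widehat P\otimes\widehat Q)=1$, Cauchy--Schwarz, and the first-order condition giving $\EE[e_1^2]={\rm Var}_{P\otimes Q}[\rho_\eps]/n$. You apply Cauchy--Schwarz in $L^2$ of $\PP$ times the uniform measure on indices. This is slightly cleaner than the paper's reduction to $X_1$ by exchangeability, which tacitly needs the selection of $(\widehat f,\widehat g)$ to be symmetric in the sample.

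The crude consequence, however, is not established by what you wrote. There are two problems.

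\emph{Random centering.} Your midpoint centering makes $a,b$ sample-dependent, but the first inequality only admits deterministic $a,b$. A pathwise bound controls $\EE[\min_{a,b}(\cdots)]$, which is at most $\min_{a,b}\EE[\cdots]$, so passing to the deterministic minimum goes the wrong way. Deterministic centering with the normalizations does not rescue it either: it only yields $\|f_\eps-\widehat f\|_\infty\le 4K$.

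\emph{Constant.} Even allowing random shifts, your two bounds of $2K$ give $2(4K^2+4K^2)=16K^2$, not $8K^2$. The step ``$2\cdot 8K^2$, i.e.\ $8K^2$'' is an arithmetic slip.

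The missing observation is that a \emph{common} random shift is free. Since $\frac1n\sum_i e_i=1-\int\rho_\eps\,d(\widehat P\otimes\widehat Q)=\frac1n\sum_j e'_j$ pathwise, replacing $(h,k)$ by $(h-a,k+a)$ leaves $\frac1n\sum_i h(X_i)e_i+\frac1n\sum_j k(Y_j)e'_j$ unchanged for any random $a$. Your Cauchy--Schwarz step does not require $a$ to be deterministic. Taking $a=\int(\widehat f-f_\eps)\,d\widehat P$ removes the cross term, so that
\[
\|h-a\|_{L^2(\widehat P)}^2+\|k+a\|_{L^2(\widehat Q)}^2=\bigl\|(\widehat f-f_\eps)\oplus(\widehat g-g_\eps)\bigr\|_{L^2(\widehat P\otimes\widehat Q)}^2\le (2K)^2
\]
by \cref{pr:ROTprelims}(vi) applied to both pairs of potentials. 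Then $\sqrt A+\sqrt B\le\sqrt{2(A+B)}$ gives exactly $8K^2\,{\rm Var}_{P\otimes Q}[\rho_\eps]/n$. So the second display should be read off from the proof, with $b=-a$ random, rather than from the first inequality as stated.
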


The second display is obtained from the first via the bound of \cref{pr:ROTprelims}(vi) for the potentials. As mentioned in \cref{se:proof-strategies}, the first display is significantly stronger since the difference of the potentials tends to zero, and this will be crucial to obtain the fast rate in \cref{sect:fast-bias}.

Meanwhile, using \Cref{pr:ROTprelims}(v),(vi) to bound ${\rm Var}_{{P\otimes Q}} [\rho_\eps]$ yields our first explicit bound on the sample 
complexity of regularized OT.

\begin{corollary}\label{coro:Cost-bounded}
We have 
    $$\bigl(\EE\bigl[\widehat{\rm OT}_{\phi,\eps}\bigr]- {\rm OT}_{\phi,\eps}\bigr)^2 \leq 8\bigl(5\|c\|_\infty+\eps\phi'(1)\bigr)^2 \, \frac{ \psi'\bigl(\frac{6\|c\|_\infty+\eps\phi'(1)}{\eps} \bigr)^2}{n}.  $$
\end{corollary}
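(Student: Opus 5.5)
The plan is to feed the second display of \cref{pr:general} with a crude bound on ${\rm Var}_{P\otimes Q}[\rho_\eps]$. That display already gives
$$\bigl(\EE\bigl[\widehat{\rm OT}_{\phi,\eps}\bigr]- {\rm OT}_{\phi,\eps}\bigr)^2 \leq 8\bigl(5\|c\|_\infty+\eps\phi'(1)\bigr)^2 \, \frac{{\rm Var}_{{P\otimes Q}} [\rho_\eps]}{n},$$
so it suffices to show ${\rm Var}_{P\otimes Q}[\rho_\eps]\le \psi'\bigl(\frac{6\|c\|_\infty+\eps\phi'(1)}{\eps}\bigr)^2$.

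\textbf{Step 1 (variance by sup norm).} We use ${\rm Var}_{P\otimes Q}[\rho_\eps]\le \EE_{P\otimes Q}[\rho_\eps^2]\le \|\rho_\eps\|_{\infty}^2$. By \cref{pr:ROTprelims}(v), $\rho_\eps=\psi'\bigl((f_\eps\oplus g_\eps-c)/\eps\bigr)$ with $\psi'\ge 0$, as noted in \cref{rem:phi-differentiability}. Moreover, $\psi'$ is non-decreasing since $\psi$ is convex as a conjugate.

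\textbf{Step 2 (bound the argument of $\psi'$).} By \cref{pr:ROTprelims}(vi), for the chosen version of the potentials we have $f_\eps\oplus g_\eps\le 5\|c\|_\infty+\eps\phi'(1)$ pointwise on $\Omega\times\Omega'$. Also $-c\le\|c\|_\infty$. Hence
$$\frac{f_\eps(x)+g_\eps(y)-c(x,y)}{\eps}\le \frac{6\|c\|_\infty+\eps\phi'(1)}{\eps}.$$
Monotonicity of $\psi'$ then gives $0\le\rho_\eps\le \psi'\bigl(\frac{6\|c\|_\infty+\eps\phi'(1)}{\eps}\bigr)$ everywhere. Combining this with Step 1 and the display above yields the claim.

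There is no real obstacle in this argument. The one point to check is that the normalization \eqref{eq:normalization} only shifts $f$ and $g$ by opposite constants, so $f\oplus g$ and hence the sup bound from \cref{pr:ROTprelims}(vi) are unaffected. We also need $\rho_\eps\ge 0$ so that squaring preserves the inequality, and this holds because $\psi'\ge 0$.
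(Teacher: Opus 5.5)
Your proposal is correct and follows the paper's route exactly. You bound ${\rm Var}_{P\otimes Q}[\rho_\eps]\le \|\rho_\eps\|_\infty^2 \le \psi'\bigl(\frac{6\|c\|_\infty+\eps\phi'(1)}{\eps}\bigr)^2$ using \cref{pr:ROTprelims}(v),(vi) together with $\psi'\ge 0$ and the monotonicity of $\psi'$, and then substitute this into the second display of \cref{pr:general}.
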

 In the case of Tsallis entropies this yields
\begin{equation}
\label{eqn:tsallis_preliminary}
\bigl(\EE\bigl[\widehat{\rm OT}_{\phi_p,\eps}\bigr]- {\rm OT}_{\phi_p,\eps}\bigr)^2
\leqslant  \begin{cases} 8\bigl(5\|c\|_\infty+\eps\bigr)^2\,
\frac{(6\|c\|_\infty+\eps)^{2/(p-1)}}{\eps^{2/(p-1)}\,n}, & p > 1 \\[0.8em]
8\bigl(5\|c\|_\infty+\eps\bigr)^2\,\frac{\exp(12\|c\|_\infty/\eps)}{n} & p = 1.
\end{cases}
\end{equation}

In particular, \Cref{coro:Cost-bounded} already implies bounds with dimension-free and polynomial
dependence on $1/\eps$ for a broad class of divergences, including Tsallis entropies of any order $p \in (1, \infty)$. For EOT, however, the corresponding bound exhibits an exponential dependence on $1/\eps$. We will see in \cref{section:Lipschitz-cost} how this bound
can be improved for Lipschitz transport costs.

Next, we turn to the variance of $\widehat{\rm OT}_{\phi,\eps}$ and show that it is of order
$1/n$ under our general assumptions, with no dependence on the regularization parameter~$\eps$.

\begin{proposition}[Variance bound]\label{pr:variance} We have
    $$ {\rm Var}\bigl[\widehat{\rm OT}_{\phi,\eps}\bigr]\leq \frac{4\|c\|_\infty^2}{n}.  $$
\end{proposition}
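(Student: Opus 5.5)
We will apply the Efron--Stein inequality to $\widehat{\rm OT}_{\phi,\eps}$, viewed as a function of the $2n$ independent variables $X_1,\dots,X_n,Y_1,\dots,Y_n$. This follows the strategy of \cite{delBarrio.Loubes.2019.AoP}. Let $\widehat{\rm OT}^{(i)}$ denote the empirical cost after $X_i$ is replaced by an independent copy $X_i'$ (and symmetrically for the $Y_j$). Efron--Stein gives
$$
{\rm Var}\bigl[\widehat{\rm OT}_{\phi,\eps}\bigr]\le \sum_{i=1}^n \EE\bigl[(\widehat{\rm OT}_{\phi,\eps}-\widehat{\rm OT}^{(i)})_+^2\bigr]+\sum_{j=1}^n \EE\bigl[(\widehat{\rm OT}_{\phi,\eps}-\widehat{\rm OT}^{(j)})_+^2\bigr].
$$
It therefore suffices to show the deterministic bound $(\widehat{\rm OT}_{\phi,\eps}-\widehat{\rm OT}^{(i)})_+\le 2\|c\|_\infty/n$, and likewise for the $Y$ swaps. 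Squaring gives $4\|c\|_\infty^2/n^2$, and summing over all $2n$ indices gives $8\|c\|_\infty^2/n$.

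To get the sharper constant $4\|c\|_\infty^2/n$, we will instead bound the one-sided increment by $\sqrt2\|c\|_\infty/n$. An alternative is the version $\frac12\sum\EE[(Z-Z^{(i)})^2]$ together with a two-sided bound of $2\|c\|_\infty/n$ per swap, summed over $2n$ swaps: this gives $\frac12\cdot 2n\cdot 4\|c\|_\infty^2/n^2=4\|c\|_\infty^2/n$. We will take this second route, so the target is $|\widehat{\rm OT}-\widehat{\rm OT}^{(i)}|\le 2\|c\|_\infty/n$.

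To bound the increment, we use duality. Let $(\widehat f,\widehat g)$ be optimal for the original sample, taken in the extended version of \cref{pr:ROTprelims}(vi), so that they are defined everywhere on $\Omega\times\Omega'$ and satisfy the pointwise first-order condition \eqref{eq:FOC.pw} with respect to $(\widehat P,\widehat Q)$. Since $\widehat{\rm OT}^{(i)}$ is a supremum of its own dual objective, plugging $(\widehat f,\widehat g)$ into it gives
$$
\widehat{\rm OT}-\widehat{\rm OT}^{(i)}\le \Phi_{\widehat P\otimes\widehat Q}(\widehat f,\widehat g)-\Phi_{\widehat P^{(i)}\otimes \widehat Q}(\widehat f,\widehat g)=\frac1n\bigl[h(X_i)-h(X_i')\bigr],
$$
where
$$
h(x):=\widehat f(x)+\int \Bigl(\widehat g-\eps\psi\Bigl(\tfrac{\widehat f(x)+\widehat g-c(x,\cdot)}{\eps}\Bigr)\Bigr)d\widehat Q .
$$
The problem thus reduces to showing that the oscillation of $h$ over $\Omega$ is at most $2\|c\|_\infty$. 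The symmetric argument, with the roles of the two samples exchanged, handles the other sign.

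The main step, and the expected obstacle, is this oscillation bound. The crude bound on $\|f\oplus g\|_\infty$ from \cref{pr:ROTprelims} would give $\eps$-dependent constants, so we will argue through convexity instead. For fixed $x$, the map $a\mapsto a-\eps\int\psi\bigl((a+\widehat g-c(x,\cdot))/\eps\bigr)d\widehat Q$ is concave. Its derivative is $1-\int\psi'(\cdot)\,d\widehat Q$, which vanishes at $a=\widehat f(x)$ by \eqref{eq:FOC.pw}. Hence
$$
h(x)=\sup_{a\in\R}\Bigl\{a-\eps\int\psi\Bigl(\tfrac{a+\widehat g-c(x,\cdot)}{\eps}\Bigr)d\widehat Q\Bigr\}+\int\widehat g\,d\widehat Q .
$$
This is a soft $c$-transform. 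Since $\psi$ is nondecreasing, the expression inside the supremum is monotone in $c(x,\cdot)$. Replacing $c(x,\cdot)$ by $c(x',\cdot)$ therefore changes the integrand by at most a shift $|a$-argument$|\le \sup_y|c(x,y)-c(x',y)|\le 2\|c\|_\infty$. Indeed, $\sup_a F(a-\delta)+\delta=\sup_a F(a)+\delta$ up to monotonicity, so $|h(x)-h(x')|\le\sup_y|c(x,y)-c(x',y)|\le 2\|c\|_\infty$. If $c$ is shifted to be nonnegative, this bound improves to $\|c\|_\infty$.

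Combining the oscillation bound with Efron--Stein then yields ${\rm Var}[\widehat{\rm OT}_{\phi,\eps}]\le 4\|c\|_\infty^2/n$, with no dependence on $\eps$ or on the dimension.
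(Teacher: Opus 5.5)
Your proposal is correct and follows essentially the same route as the paper: Efron--Stein in the symmetric form $\frac12\sum_k\EE[(Z-Z^{(k)})^2]$, plugging the unperturbed potentials (in the pointwise version of \cref{pr:ROTprelims}(vi)) into the perturbed dual, and a two-sided per-swap bound of $2\|c\|_\infty/n$ obtained by symmetry. The only difference is local: the paper bounds $h(X_1)-h(X_1')$ via the tangent-line inequality $\psi(b)-\psi(a)\le\psi'(b)(b-a)$ and the first-order condition at $X_1'$, which gives a $\psi'$-weighted average of $c(X_1,Y_j)-c(X_1',Y_j)$, whereas you use the soft $c$-transform representation of $h$ and the monotonicity of $\psi$ to get $\sup_y|c(x,y)-c(x',y)|$, and this yields the same constant.
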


Independence of the regularization parameter~$\eps$ is expected, as the variance bound also holds for unregularized optimal transport \cite{delBarrio.el.al.2024.AIHP}. By contrast, the bias bound in \cref{pr:general} depends on $\eps$ through the variance $\mathrm{Var}_{P \otimes Q}[\rho_\eps]$. 

Before proceeding to more refined sample complexity results, we state a central limit theorem for the regularized cost.

\begin{theorem}[Central limit theorem]\label{Theorem:CLT}
Suppose that $\Omega$ is connected and $c\in \mathcal{C}(\Omega\times\Omega')$. Consider the estimator 
$$ \widehat{W}= \widehat{\rm OT}_{\phi,\eps} - \int h_\eps(x,y) d(\widehat{P} \otimes \widehat{Q})(x,y),$$
where 
$$ h_\eps(x,y)= f_\eps(x) + g_\eps(y) - \eps \psi\biggl(\frac{f_\eps(x)+g_\eps(y)-c(x,y)}{\eps} \biggr).$$
Then $ n {\rm Var}[\widehat{W}] \to 0 $, and as a consequence, 
    $\sqrt{n}(\widehat{\rm OT}_{\phi,\eps}-\EE[\widehat{\rm OT}_{\phi,\eps}])  $ converges in distribution to a Gaussian random variable with mean zero and variance given by
\[
{\rm Var}\Biggl[f_\eps(X)+g_\eps(Y)
-\eps\Bigl(
\int \psi(\xi_\eps(X,y))\,dQ(y)
+\int \psi(\xi_\eps(x,Y))\,dP(x)
\Bigr)\Biggr],
\]
where $\xi_\eps(x,y):=(f_\eps(x)+g_\eps(y)-c(x,y))/\eps$ and $(X,Y)\sim P\otimes Q$.
\end{theorem}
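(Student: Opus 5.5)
The plan is to split $\widehat{\rm OT}_{\phi,\eps}=\widehat W+V_n$ with $V_n:=\int h_\eps\, d(\widehat P\otimes\widehat Q)=\frac1{n^2}\sum_{i,j}h_\eps(X_i,Y_j)$. First I will show $n{\rm Var}[\widehat W]\to0$ using the Efron--Stein strategy behind \cref{pr:variance}. Then I will get the CLT from the V-statistic $V_n$ by its Hoeffding decomposition and Slutsky's lemma.

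\emph{Step 1 (Efron--Stein for $\widehat W$).} Let $X_i'$ be an independent copy of $X_i$, and denote by a superscript $(i)$ all quantities built from the sample with $X_i$ replaced by $X_i'$ (similarly for the $Y_j$).
\begin{itemize}
\item Take the optimal potentials $(\widehat f,\widehat g)$ of the original sample, extended to all of $\Omega\times\Omega'$ by \cref{pr:ROTprelims}(vi). They are feasible for the modified empirical dual, so strong duality gives $\widehat{\rm OT}^{(i)}\ge \widehat\Phi^{(i)}(\widehat f,\widehat g)$.
\item The two empirical dual objectives differ only in the $X_i$ row, so
\[
\widehat{\rm OT}-\widehat{\rm OT}^{(i)}\le \tfrac1n\bigl(\widehat H(X_i)-\widehat H(X_i')\bigr),\qquad \widehat H(x):=\int\Bigl\{\widehat f(x)+\widehat g(y)-\eps\psi\Bigl(\tfrac{\widehat f(x)+\widehat g(y)-c(x,y)}{\eps}\Bigr)\Bigr\}d\widehat Q(y).
\]
Exchanging the roles of the two samples gives the reverse bound with $\widehat H^{(i)}$.
\item Exactly, $V_n-V_n^{(i)}=\frac1n\bigl(H(X_i)-H(X_i')\bigr)$, where $H$ is defined like $\widehat H$ but with $(f_\eps,g_\eps)$ in place of $(\widehat f,\widehat g)$.
\end{itemize}
Hence $|\widehat W-\widehat W^{(i)}|\le \frac2n\max\bigl(\|\widehat H-H\|_\infty,\|\widehat H^{(i)}-H\|_\infty\bigr)$. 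Note that $\widehat H-H$ depends only on $\widehat f\oplus\widehat g - f_\eps\oplus g_\eps$, so the normalization of the potentials is irrelevant. Since $\psi$ is $\mathcal C^1$ and all potentials are uniformly bounded by \cref{pr:ROTprelims}(vi), $\psi$ is Lipschitz on the relevant range. This yields
\[
\|\widehat H-H\|_\infty\le C\,\|\widehat f\oplus\widehat g-f_\eps\oplus g_\eps\|_{\infty,\Omega\times\Omega'} .
\]

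\emph{Main obstacle: showing this is $o(1)$ in $L^2$.} I will apply \cref{lemma:stability}, which needs $\Omega$ connected and $c$ continuous. Almost surely $\widehat P\to P$ and $\widehat Q\to Q$ weakly, and so do the perturbed empirical measures, since one replaced atom changes them by at most $1/n$ in total variation. Therefore $\|\widehat H-H\|_\infty\to0$ and $\|\widehat H^{(i)}-H\|_\infty\to0$ a.s. By exchangeability their laws do not depend on $i$, and they are uniformly bounded, so dominated convergence gives $\EE$ of their squares $\to0$. Efron--Stein then yields $n{\rm Var}[\widehat W]\le n\cdot 2n\cdot\frac{4}{n^2}\,o(1)\to0$. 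One point needs checking: $\widehat f,\widehat g$ are extended to $\Omega$ and not merely defined on the sample, and the uniform bound in (vi) holds for these extensions.

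\emph{Step 2 (CLT for $V_n$).} Write the Hoeffding decomposition $h_\eps=\theta+h_1(x)+h_2(y)+r(x,y)$ with respect to $P\otimes Q$, where $\theta=\EE[h_\eps(X,Y)]$ and $r$ is degenerate.
\begin{itemize}
\item Because $X$ and $Y$ are independent samples, the remainder $\frac1{n^2}\sum_{i,j}r(X_i,Y_j)$ has variance $O(\|r\|_\infty^2/n^2)$.
\item So $\sqrt n(V_n-\EE V_n)=\frac1{\sqrt n}\sum_i h_1(X_i)+\frac1{\sqrt n}\sum_j h_2(Y_j)+o_{L^2}(1)$, which converges to a centered Gaussian with variance ${\rm Var}[h_1(X)]+{\rm Var}[h_2(Y)]={\rm Var}[h_1(X)+h_2(Y)]$.
\item Up to additive constants, $h_1(x)=f_\eps(x)-\eps\int\psi(\xi_\eps(x,y))\,dQ(y)$, and symmetrically for $h_2$. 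This gives exactly the stated variance.
\end{itemize}

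\emph{Conclusion.} Since $\sqrt n(\widehat W-\EE\widehat W)\to0$ in $L^2$ by Step 1, Slutsky's lemma yields the claimed convergence of $\sqrt n(\widehat{\rm OT}_{\phi,\eps}-\EE[\widehat{\rm OT}_{\phi,\eps}])$.
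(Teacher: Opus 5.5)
Your proposal is correct and follows essentially the same route as the paper. You apply Efron--Stein to $\widehat W$ using the one-row perturbation bound and \cref{lemma:stability} (through a.s.\ weak convergence of the empirical measures and dominated convergence), and then pass to the CLT for the two-sample V-statistic $\int h_\eps\, d(\widehat P\otimes\widehat Q)$. The differences are minor: you use the two-sided Efron--Stein bound, which also requires controlling $\widehat H^{(i)}$, where the paper uses the positive-part version. You also make the Hoeffding decomposition explicit, which shows that the bias estimate from \cref{pr:general} that the paper invokes is not needed when centering at $\EE[\widehat{\rm OT}_{\phi,\eps}]$.
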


\subsection{Bounds on the variance of the density for Lipschitz costs}\label{section:Lipschitz-cost}

In \Cref{coro:Cost-bounded}, $\mathrm{Var}_{P \otimes Q}[\rho_\eps]$
was bounded crudely in terms of $\|c\|_\infty$.
We now strengthen this bound when the cost
is Lipschitz, obtaining refined results for the sample complexity. We write $\mathbb{B}_R(x_0)$ for the open ball of radius $R > 0$ centered at $x_0 \in \mathbb{R}^d$, and $\overline{\mathbb{B}}_R(x_0)$ for its closure.

\begin{assumption}[Cost is Lipschitz in first
argument]\label{assumption-cost-lip-one-side}
   There exists $L>0$ such that
    $$ |c(x,y)-c(z,y)|\leq L\|x-z\| \quad\text{for all } x,z\in \Omega \text{ and }y\in \Omega'.$$
\end{assumption}

\begin{lemma}\label{lemma:Bound-lipschitz} Let \cref{assumption-cost-lip-one-side} hold. Then 
   $$ \int  \rho_\eps(x,y)^2 dP(x)dQ(y)\leq  \int  \inf_{r>0}\psi'\biggl( \phi'\biggl( \frac{1}{P(\overline{\mathbb{B}}_{r}(x))} \biggr)+  \frac{2L}{\eps} r \biggr) dP(x). $$
\end{lemma}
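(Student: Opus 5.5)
Fix $x\in\Omega$ and $r>0$. Since $\rho_\eps(x,\cdot)$ is a $Q$-probability density by \eqref{eq:FOC.pw} and $\rho_\eps\le\|\rho_\eps(x,\cdot)\|_\infty$, we have $\int\rho_\eps(x,y)^2\,dQ(y)\le \sup_y\rho_\eps(x,y)$. It therefore suffices to show, pointwise in $(x,y)$,
\[
\rho_\eps(x,y)\le \psi'\Bigl(\phi'\bigl(1/P(\overline{\mathbb B}_r(x))\bigr)+\tfrac{2L}{\eps}r\Bigr).
\]
Integrating against $P$ and taking the infimum over $r$ inside then gives the lemma. Measurability of the infimum is harmless, since one can restrict to rational $r$ by monotonicity/continuity arguments.

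For the pointwise bound, write $\xi(x,y)=(f(x)+g(y)-c(x,y))/\eps$, so that $\rho_\eps=\psi'(\xi)$. By \cref{pr:ROTprelims}(vi), $f$ inherits the modulus of $c(\cdot,y)$, so $f$ is $L$-Lipschitz under \cref{assumption-cost-lip-one-side}. For $z\in\overline{\mathbb B}_r(x)$ this gives
\[
\xi(z,y)\ge \xi(x,y)-\tfrac{2L}{\eps}r,
\]
one factor $L$ coming from $f$ and one from $c$. Using the second equation in \eqref{eq:FOC.pw} at $y$, and that $\psi'$ is nonnegative and nondecreasing,
\[
1=\int\psi'(\xi(z,y))\,dP(z)\ge P(\overline{\mathbb B}_r(x))\,\psi'\bigl(\xi(x,y)-\tfrac{2L}{\eps}r\bigr).
\]
Hence $\psi'(\xi(x,y)-2Lr/\eps)\le 1/P(\overline{\mathbb B}_r(x))=:m$.

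The remaining step is to invert $\psi'$, and this is the main obstacle, because $\psi$ need not be strictly convex everywhere. We have $m\ge1$. By \cref{rem:phi-differentiability}, $\phi$ is differentiable at $m$, and by conjugacy $\psi'(\phi'(m))=m$. On $[t_\psi-\delta_\psi,\infty)$ the derivative $\psi'$ is strictly increasing, and $\phi'(m)\ge\phi'(1)=t_\psi$. Suppose $s:=\xi(x,y)-2Lr/\eps$ satisfied $s>\phi'(m)$. Then strict monotonicity of $\psi'$ there would give $\psi'(s)>m$, contradicting the previous display. So $s\le\phi'(m)$. Finally, monotonicity of $\psi'$ yields $\rho_\eps(x,y)=\psi'(\xi(x,y))\le\psi'\bigl(\phi'(m)+2Lr/\eps\bigr)$, which is the required bound; the case $P(\overline{\mathbb B}_r(x))=0$ cannot occur for $x\in\spt P$.
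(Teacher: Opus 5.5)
Your proposal is correct and follows the paper's proof essentially step by step: the $2L$-Lipschitz lower bound on $\xi(\cdot,y)$ near $x$, integration against $P$ using the second first-order condition to get $\psi'\bigl(\xi(x,y)-\tfrac{2L}{\eps}r\bigr)\le 1/P(\overline{\mathbb B}_r(x))$, inversion, and the final step $\int\rho_\eps(x,y)^2\,dQ(y)\le\sup_y\rho_\eps(x,y)$, which is equivalent to the paper's step of multiplying the pointwise bound by $\rho_\eps$ and using that $\pi_\eps$ has first marginal $P$. Your inversion via strict monotonicity of $\psi'$ on $[t_\psi-\delta_\psi,\infty)$ together with $\psi'(\phi'(m))=m$ is a more explicit version of the paper's appeal to injectivity of $\phi'$ on $[1,\infty)$, and your measurability remark (right-continuity in $r$ lets you restrict to rational $r$) covers a point the paper leaves implicit.
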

\begin{remark}
    \Cref{lemma:Bound-lipschitz} involves only one of the two marginals (the one referenced in \Cref{assumption-cost-lip-one-side}). If the cost function is $L$-Lipschitz in both variables, then by symmetry, the lower complexity adaptation principle holds:
 $$ \int  \bigl(\rho_\eps(x,y)\bigr)^2 dP(x)dQ(y)\leq \min_{\mu\in \{P,Q\}}\int  \inf_{r>0}\psi'\biggl( \phi'\biggl( \frac{1}{\mu(\overline{\mathbb{B}}_{r}(x))} \biggr)+  \frac{2L}{\eps} r \biggr) d\mu(x). $$
\end{remark}

Combining \cref{lemma:Bound-lipschitz} with \cref{pr:general,pr:variance} gives the following bound.

\begin{corollary}\label{corollary-Bias-variance-adaptation}
 Let \cref{assumption-cost-lip-one-side} hold. Then
 \begin{multline*}
      \EE\bigl[\bigl(\widehat{\rm OT}_{\phi,\eps}- {\rm OT}_{\phi,\eps} \bigr)^2\bigr]\\ \leq  \frac{8\bigl(5\|c\|_\infty+\eps\phi'(1)\bigr)^2}{n}   \int \inf_{r>0}\psi'\biggl( \phi'\biggl( \frac{1}{P(\overline{\mathbb{B}}_{r}(x))} \biggr)+ \frac{2L}{\eps} r \biggr)  dP(x) + \frac{4\|c\|_\infty^2}{n} .
 \end{multline*}

\end{corollary}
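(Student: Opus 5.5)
This is a direct combination of the bias--variance decomposition~\eqref{eq:bias-variance-decomp} with the preceding bounds. The plan is to bound the two terms separately and then add them.

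\emph{Variance.} \Cref{pr:variance} gives ${\rm Var}[\widehat{\rm OT}_{\phi,\eps}]\le 4\|c\|_\infty^2/n$ directly. This is exactly the last summand of the claimed bound.

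\emph{Bias.} We use the second display of \cref{pr:general}:
\[
\bigl(\EE[\widehat{\rm OT}_{\phi,\eps}]- {\rm OT}_{\phi,\eps}\bigr)^2 \leq 8\bigl(5\|c\|_\infty+\eps\phi'(1)\bigr)^2\, \frac{{\rm Var}_{P\otimes Q}[\rho_\eps]}{n}.
\]
It then remains to replace the variance of the density by its second moment. Since $\rho_\eps\ge0$ and $\int\rho_\eps\,d(P\otimes Q)=1$, we have ${\rm Var}_{P\otimes Q}[\rho_\eps]=\int\rho_\eps^2\,d(P\otimes Q)-1\le\int\rho_\eps^2\,d(P\otimes Q)$. \Cref{lemma:Bound-lipschitz}, which is applicable under \cref{assumption-cost-lip-one-side}, bounds this second moment by $\int\inf_{r>0}\psi'\bigl(\phi'(1/P(\overline{\mathbb{B}}_r(x)))+\tfrac{2L}{\eps}r\bigr)\,dP(x)$.

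Substituting both bounds into~\eqref{eq:bias-variance-decomp} yields the stated inequality.

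There is essentially no obstacle. The only point to check is the step from variance to second moment, which just drops the $-1$ and is therefore harmless.
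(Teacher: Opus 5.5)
Your proposal is correct and follows the paper's argument: insert \cref{pr:variance} and the second display of \cref{pr:general} into the decomposition~\eqref{eq:bias-variance-decomp}, then bound ${\rm Var}_{P\otimes Q}[\rho_\eps]\le\int\rho_\eps^2\,d(P\otimes Q)$ via \cref{lemma:Bound-lipschitz}. The passage from the variance to the second moment, which only drops the term $-1$, is left implicit in the paper, and you handle it correctly.
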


This abstract right-hand side will be made more concrete in the next two subsections, which will make clear that 
\Cref{corollary-Bias-variance-adaptation} is indeed a significant strengthening of \Cref{coro:Cost-bounded}.

\subsection{Semi-discrete regularized optimal transport}\label{section:Complexity-semidiscrete}
Semi-discrete transport is the setting where $P=\sum_{i=1}^N p_i \delta_{x_i}$ is supported on a set of distinct atoms $x_1, \dots, x_N \in \R^d$ (whereas $Q$ is arbitrary). Then, the unregularized OT problem does not suffer from the curse of dimensionality \cite{delBarrio.et.al.2024.BJ,Sadhu.et.al.AoAP.2024,Hundrieser.et.al.AHIP.2024}. In EOT, \cite{Stromme.24} and \cite{Groppe-Shayan-LCA-JMLR} discovered independently that
 $\EE[| \widehat{\rm EOT} - {\rm EOT}|] \lesssim n^{-\frac{1}{2}} $, where the hidden constant is independent of~$\eps$. More specifically, \cite{Groppe-Shayan-LCA-JMLR} showed this for bounded costs, and~\cite{Stromme.24} for Lipschitz costs, but the proof of 
\cite{Stromme.24} only uses Lipschitz continuity in one variable. In fact, in the semi-discrete case with $\Omega=\{x_1, \dots, x_N\}$, any bounded cost satisfies \Cref{assumption-cost-lip-one-side}: if   $N\geq 2$, for $x_i\neq x_j$ and $y\in \Omega'$,
\[
\bigl|c(x_i,y)-c(x_j,y)\bigr|\leq 2\|c\|_\infty \leq  L  \|x_i-x_j\|, \quad L:= \frac{2\|c\|_\infty}{\min_{k\neq\ell}\|x_k-x_\ell\|}.
\]
The case \(N=1\) is trivial and any \(L>0\) may be chosen.
The right-hand side of \cref{lemma:Bound-lipschitz} can then be bounded via
\[
\int \inf_{r>0}\psi'\biggl( \phi'\biggl( \frac{1}{P(\overline{\mathbb{B}}_{r}(x))} \biggr)+ \frac{2L}{\eps} r \biggr)  dP(x) \leq  \lim_{r\to 0}\sum_{i=1}^N  \psi'\biggl(    \phi'\bigl(p_i^{-1}\bigr)   +\frac{2L}{\eps} r\biggr) p_i = N.
\]
Therefore, \cref{corollary-Bias-variance-adaptation} yields the following bound, generalizing the aforementioned results for EOT to general regularizations.

\begin{corollary}[Semi-discrete]
 Assume that the support of~$P$ has cardinality $N<\infty$. Then
  $$ \EE\bigl[\bigl(\widehat{\rm OT}_{\phi,\eps}- {\rm OT}_{\phi,\eps} \bigr)^2\bigr] \leq  \frac{8\bigl(5\|c\|_\infty+\eps\phi'(1)\bigr)^2 N}{n}    + \frac{4\|c\|_\infty^2}{n} . $$
\end{corollary}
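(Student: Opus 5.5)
The plan is to apply \cref{corollary-Bias-variance-adaptation} directly. The only work is to verify its hypothesis and to evaluate its abstract integral in the semi-discrete case; the paragraph preceding the statement already sketches both.

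\emph{Step 1: the Lipschitz hypothesis.} We check \cref{assumption-cost-lip-one-side} with $\Omega=\{x_1,\dots,x_N\}$. If $N\ge 2$, set $L:=2\|c\|_\infty/\min_{k\neq \ell}\|x_k-x_\ell\|$, which is finite because the atoms are distinct. For $i\neq j$ we then have $|c(x_i,y)-c(x_j,y)|\le 2\|c\|_\infty\le L\|x_i-x_j\|$, and the case $i=j$ is trivial. If $N=1$, the Lipschitz condition is vacuous and any $L>0$ works. In both cases \cref{corollary-Bias-variance-adaptation} applies with this $L$. The constant $L$ does not appear in the final bound, so its size is irrelevant.

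\emph{Step 2: the integral.} Since $P$ is discrete, the integral equals $\sum_i p_i\inf_{r>0}\psi'\bigl(\phi'(1/P(\overline{\mathbb B}_r(x_i)))+2Lr/\eps\bigr)$.

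Fix $i$ and take $0<r<\min_{k\neq\ell}\|x_k-x_\ell\|$ (any $r$ if $N=1$). Then $P(\overline{\mathbb B}_r(x_i))=p_i$, so the infimum is at most
\[
\psi'\bigl(\phi'(p_i^{-1})+2Lr/\eps\bigr).
\]
Here $p_i^{-1}\ge 1$, so $\phi'$ is well defined at $p_i^{-1}$ by \cref{rem:phi-differentiability}.

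Next let $r\downarrow 0$. Continuity of $\psi'$, which holds since $\psi\in\mathcal C^1$, makes this term tend to $\psi'(\phi'(p_i^{-1}))$.

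\emph{Step 3: conjugate duality.} We need $\psi'(\phi'(t))=t$ for $t\ge 1$. The subgradient inequality gives $\phi(s)\ge\phi(t)+\phi'(t)(s-t)$, so the supremum defining $\psi(\phi'(t))$ is attained at $s=t$, with $\psi(\phi'(t))=t\phi'(t)-\phi(t)$. Therefore $t\in\partial\psi(\phi'(t))$. Because $\psi$ is differentiable, this subdifferential is the singleton $\{\psi'(\phi'(t))\}$, and so $\psi'(\phi'(t))=t$. This identification is the only mildly delicate step.

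Applying it with $t=p_i^{-1}$, the integral is at most $\sum_i p_i\,p_i^{-1}=N$.

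\emph{Step 4: conclusion.} Substituting this bound into \cref{corollary-Bias-variance-adaptation} yields exactly the claimed estimate.
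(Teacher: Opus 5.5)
Your proposal is correct and follows the paper's argument: verify \cref{assumption-cost-lip-one-side} with $L=2\|c\|_\infty/\min_{k\neq\ell}\|x_k-x_\ell\|$, let $r\to 0$ so that $P(\overline{\mathbb B}_r(x_i))=p_i$ and the integral in \cref{corollary-Bias-variance-adaptation} is at most $N$, then substitute. The only addition is your explicit conjugate-duality proof that $\psi'(\phi'(t))=t$ for $t\geq 1$, which the paper uses without comment.
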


\subsection{Bounds on the variance of the density for Tsallis entropies} \label{sect:Lp-Entropies-cost-adaptation}
In this section we develop the bound
of~\Cref{lemma:Bound-lipschitz} in the case of the $L^p$ (equivalently, up to additive constants, Tsallis) regularizers, namely
$$ \phi_p(t) = \begin{cases}
    \frac{t^p - 1}{p} & \text{if } p\in (1, \infty),\\
t\log t & \text{if } p=1. 
\end{cases}$$
\begin{definition}[Covering numbers]
A \emph{$\delta$-cover} of $\Omega$ is a set of points $\{x_1, x_2, \dots, x_N\} \subset \Omega$ such that $\Omega \subset \bigcup_{i=1}^N \mathbb{B}_\delta(x_i)$. The \emph{$\delta$-covering number} $\mathcal{N}(\Omega,\delta)$ of $\Omega$ is the smallest cardinality~$N$ of a $\delta$-cover of $\Omega$.
\end{definition}
\begin{lemma}\label{lemma:Bound-energy-Lp}
Fix $\phi=\phi_p$ with $p\in[1,\infty)$ and let \cref{assumption-cost-lip-one-side} hold.
\begin{enumerate}
    \item If $p\geq 2$, 
    $$  \int  \inf_{r>0}\psi'_p\biggl( \phi'_p\biggl( \frac{1}{P(\overline{\mathbb{B}}_{r}(x))} \biggr)+  \frac{2L}{\eps} r \biggr) dP(x) \leq \inf_{r>0}
\biggl\{\mathcal{N}\Bigl(\Omega,\frac{r}{4}\Bigr)+  \Bigl( \frac{2L}{\eps} r \Bigr)^{\frac{1}{p-1}} \biggr\}. $$
    \item If $1<p< 2$,
$$  \int  \inf_{r>0}\psi'_p\biggl( \phi'_p\biggl( \frac{1}{P(\overline{\mathbb{B}}_{r}(x))} \biggr)+  \frac{2L}{\eps} r \biggr) dP(x) \leq 2^{\frac{2-p}{p-1}}  \inf_{r>0}  \biggl\{  \mathcal{N}\Bigl(\Omega,\frac{r}{4}\Bigr)+\Bigl( \frac{2L}{\eps} r \Bigr)^{\frac{1}{p-1}}\biggr\}. $$
    \item If $p=1$, 
    $$  \int  \inf_{r>0}\psi'_p\biggl( \phi'_p\biggl( \frac{1}{P(\overline{\mathbb{B}}_{r}(x))} \biggr)+  \frac{2L}{\eps} r \biggr) dP(x) \leq \inf_{r>0}    \mathcal{N}\Bigl(\Omega,\frac{r}{4}\Bigr)e^{ \frac{2L}{\eps} r } .$$
\end{enumerate}
\end{lemma}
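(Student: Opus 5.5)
The plan is to swap the $\inf_{r>0}$ inside the integral for a single fixed $r>0$, which only enlarges the left-hand side, and then to take the infimum over $r$ at the very end. For fixed $r$ we need explicit formulas for $\phi_p'$ and $\psi_p'$. When $p>1$ we have $\phi_p'(t)=t^{p-1}$ and $\psi_p'(s)=(s)_+^{1/(p-1)}$. When $p=1$ we have $\phi_1'(t)=\log t+1$ and $\psi_1'(s)=e^{s-1}$, so that $\psi_1'(\phi_1'(t)+a)=t\,e^{a}$. Abbreviate $a:=2Lr/\eps$ and $m(x):=P(\overline{\mathbb{B}}_r(x))$. The quantity to control is then $\int \psi_p'\bigl(m(x)^{-(p-1)}+a\bigr)\,dP(x)$.

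First we split the integrand into two pieces. For $p\ge 2$ the exponent $1/(p-1)\le 1$, so $s\mapsto s^{1/(p-1)}$ is subadditive on $\R_+$. This gives $(m^{-(p-1)}+a)^{1/(p-1)}\le m^{-1}+a^{1/(p-1)}$. For $1<p<2$ the exponent $1/(p-1)$ is greater than $1$, and convexity gives $(u+v)^{q}\le 2^{q-1}(u^q+v^q)$ with $q=1/(p-1)$, hence $2^{q-1}=2^{(2-p)/(p-1)}$. This is exactly the constant in (ii). For $p=1$ the integrand is already the product $m(x)^{-1}e^{a}$. In every case the problem therefore reduces to bounding the single integral $\int P(\overline{\mathbb{B}}_r(x))^{-1}\,dP(x)$, plus the constant $a^{1/(p-1)}$, which contributes directly because $P$ is a probability measure.

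The key step, and the only nontrivial one, is the covering bound
\[
\int \frac{dP(x)}{P(\overline{\mathbb{B}}_r(x))}\le \mathcal{N}\Bigl(\Omega,\frac r4\Bigr).
\]
Take a minimal $r/4$-cover $\{x_1,\dots,x_N\}$ of $\Omega$ and let $B_i:=\mathbb{B}_{r/4}(x_i)$. Disjointify these into $A_i:=B_i\setminus\bigcup_{j<i}B_j$, which still cover $\Omega$ and have diameter at most $r/2\le r$. For $x\in A_i$ we have $A_i\subset\overline{\mathbb{B}}_r(x)$, hence $P(\overline{\mathbb{B}}_r(x))\ge P(A_i)$. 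It follows that
\[
\int \frac{dP}{P(\overline{\mathbb{B}}_r(\cdot))}\le \sum_{i:\,P(A_i)>0}\frac{P(A_i)}{P(A_i)}\le N.
\]
The points $x$ with $m(x)=0$ form a $P$-null set, since any such $x$ lies in some $A_i$ of positive mass or in a null $A_i$, so they cause no trouble. The radius $r/4$ is more generous than needed, and this slack matches the constant in the statement.

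Combining the pieces gives $\mathcal{N}(\Omega,r/4)+(2Lr/\eps)^{1/(p-1)}$ for $p\ge2$, the same expression multiplied by $2^{(2-p)/(p-1)}$ for $1<p<2$, and $\mathcal{N}(\Omega,r/4)\,e^{2Lr/\eps}$ for $p=1$. Taking the infimum over $r>0$ then gives all three claims. The only care points are getting the subadditivity and convexity constants right and making sure the positive part $(s)_+$ in $\psi_p'$ causes no issue. It does not, because all arguments here are nonnegative.
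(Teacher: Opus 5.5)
Your proposal is correct and follows the paper's proof. You fix $r$ and split the integrand: via subadditivity of $s\mapsto s^{1/(p-1)}$ for $p\ge 2$, via the convexity bound with constant $2^{(2-p)/(p-1)}$ for $1<p<2$, and via the identity $\psi_1'(\phi_1'(t)+a)=t\,e^{a}$ for $p=1$. You then bound $\int P(\overline{\mathbb{B}}_r(x))^{-1}\,dP(x)\le \mathcal{N}(\Omega,r/4)$. The only difference is that the paper imports this last covering estimate from \cite[Proposition~18]{Stromme.24}, whereas you prove it directly with a disjointified cover. That self-contained argument is valid, and it would even work with an $r/2$-cover.
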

Next, we recall the definition of Minkowski dimension. 
\begin{definition}
    The (upper) \emph{Minkowski dimension} of $\Omega$ is  
\[
{\rm dim}_M(\Omega) = \limsup_{\delta \to 0} \frac{\log \mathcal{N}(\Omega, \delta)}{-\log \delta}.
\]
(The lower Minkowski dimension is defined with $\liminf$, but since we will only use the upper dimension, for brevity, we simply call that the Minkowski dimension.)
\end{definition}

The following observation is a direct consequence of \cref{lemma:Bound-lipschitz,lemma:Bound-energy-Lp}.

\begin{lemma}\label{lemma:LCAP-for-Tsallis-bound-variance}
 Fix $\phi=\phi_p$ with $p\in[1,\infty)$ and let \cref{assumption-cost-lip-one-side} hold. For any real number $d_P> {\rm dim}_M(\Omega)$, there exist $C>0$ and $\eps_0>0$ such that for every $\eps\leq \eps_0$,
    $$ {\rm Var}_{P\otimes Q}[\rho_\eps] \leq   C 
       \eps^{-\frac{d_P}{1+d_P(p-1)}} . $$   
\end{lemma}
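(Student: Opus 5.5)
The plan is to combine \cref{lemma:Bound-lipschitz} with \cref{lemma:Bound-energy-Lp} and then optimize over the radius $r$. First, since $\int\rho_\eps\,d(P\otimes Q)=1$, we have
\[
{\rm Var}_{P\otimes Q}[\rho_\eps]=\int\rho_\eps^2\,d(P\otimes Q)-1\le\int\rho_\eps^2\,d(P\otimes Q).
\]
\Cref{lemma:Bound-lipschitz} bounds the right-hand side by the integral appearing in \cref{lemma:Bound-energy-Lp}. That lemma in turn gives, for $p>1$, a bound of the form
\[
C_p\inf_{r>0}\Bigl\{\mathcal{N}(\Omega,r/4)+(2Lr/\eps)^{1/(p-1)}\Bigr\},
\]
with $C_p=1$ for $p\ge2$ and $C_p=2^{(2-p)/(p-1)}$ for $1<p<2$. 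For $p=1$ the bound is $\inf_{r>0}\mathcal{N}(\Omega,r/4)e^{2Lr/\eps}$.

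Second, I use the Minkowski dimension. Because $d_P>{\rm dim}_M(\Omega)$, the definition of the upper limit gives $\delta_0>0$ such that $\mathcal{N}(\Omega,\delta)\le\delta^{-d_P}$ for all $\delta\le\delta_0$. Hence $\mathcal{N}(\Omega,r/4)\le 4^{d_P}r^{-d_P}$ whenever $r\le4\delta_0$.

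Third, I choose $r$ as a function of $\eps$.
\begin{itemize}
\item For $p>1$, balance $r^{-d_P}$ against $(r/\eps)^{1/(p-1)}$. Setting $r=\eps^{\theta}$, equality of exponents requires $-d_P\theta=(\theta-1)/(p-1)$. This gives
\[
\theta=\frac{1}{1+d_P(p-1)}\in(0,1].
\]
Both terms are then of order $\eps^{-d_P\theta}=\eps^{-\frac{d_P}{1+d_P(p-1)}}$.
\item For $p=1$, take $r=\eps$. Then $e^{2Lr/\eps}=e^{2L}$ is a constant and $\mathcal{N}(\Omega,\eps/4)\le 4^{d_P}\eps^{-d_P}$. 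This matches the claimed exponent, since $d_P/(1+d_P\cdot0)=d_P$.
\end{itemize}

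Finally, $\eps_0$ is chosen so that $r=\eps^\theta\le4\delta_0$ for all $\eps\le\eps_0$; for $p=1$ we need $\eps\le 4\delta_0$. The constant $C$ collects $C_p$, $4^{d_P}$, $(2L)^{1/(p-1)}$ (or $e^{2L}$ when $p=1$), and possibly the absorbed term $-1$.

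There is no genuine obstacle here. The only point needing care is that the covering bound holds only for small radii, which forces the restriction $\eps\le\eps_0$. One also needs $\theta>0$, which holds, so that $r\to0$ as $\eps\to0$.
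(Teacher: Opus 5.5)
Your proposal is correct and follows the route the paper intends. The paper states this lemma as a direct consequence of \cref{lemma:Bound-lipschitz,lemma:Bound-energy-Lp} without writing out details, and your steps supply exactly those omitted details: the bound ${\rm Var}_{P\otimes Q}[\rho_\eps]\le\int\rho_\eps^2\,d(P\otimes Q)$, the covering estimate $\mathcal{N}(\Omega,\delta)\le\delta^{-d_P}$ for small $\delta$ (as in \cref{rk:equality-for-M-dim}), and the choice $r=\eps^{1/(1+d_P(p-1))}$, or $r=\eps$ when $p=1$.
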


\begin{remark}\label{rk:equality-for-M-dim}
The assumption $d_P> {\rm dim}_M(\Omega)$ ensures that $\mathcal{N}(\Omega,\delta)\leq C'\delta^{- d_P}$, for $\delta$ small enough and some $C'>0$. In fact, the latter inequality is a sharper sufficient condition for the result. In most cases of interest, for instance when $\Omega$ is a Lipschitz submanifold, this sharper condition is also satisfied for $d_P={\rm dim}_M(\Omega)$.
\end{remark}

 The following example shows that our bound on ${\rm Var}_{P\otimes Q}[\rho_\eps]$ in \cref{lemma:LCAP-for-Tsallis-bound-variance} is sharp. 
 
\begin{example}\label{example:sharp}
    Let $P=Q$ be the uniform measure on $[0,1]^d$ and $$c(x,y)=d_{\mathbb{T}^d}(x,y)= \inf_{z\in \Z^d}\|x-y-z\|.$$ Then there exist $C>0$ and $\eps_0>0$ such that for every $\eps\leq \eps_0$, 
    $$  {\rm Var}_{P\otimes Q}[\rho_\eps] \geq  \frac{1}{C} \cdot \begin{cases}
      \eps^{-\frac{d}{  1+(p-1)d}} & \text{if $p\in (1, \infty)$},\\
         \eps^{-d} &\text{if } p=1.
    \end{cases}  $$
\end{example} 
Finally, we combine \cref{pr:general,pr:variance,lemma:LCAP-for-Tsallis-bound-variance} to obtain the main result for the general $p$-Tsallis family.

\begin{corollary}\label{corollary-bias-minkowski}
 Fix $\phi=\phi_p$ with $p\in[1,\infty)$ and let \cref{assumption-cost-lip-one-side} hold. For any $d_P> {\rm dim}_M(\Omega)$, there exist $C>0$ and $\eps_0>0$ such that  for all $\eps\leq \eps_0$,
        $$ \EE\bigl[\bigl(\widehat{\rm OT}_{\phi_p,\eps}- {\rm OT}_{\phi_p,\eps} \bigr)^2\bigr] \leq   C \frac{\eps^{-\frac{d_P}{1+d_P(p-1)}} }{n} . $$
 \end{corollary}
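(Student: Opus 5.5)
The plan is to combine the bias--variance decomposition~\eqref{eq:bias-variance-decomp} with the three ingredients already at hand: \cref{pr:general} for the bias, \cref{pr:variance} for the variance, and \cref{lemma:LCAP-for-Tsallis-bound-variance} for the density variance ${\rm Var}_{P\otimes Q}[\rho_\eps]$. Concretely, I will first apply the second display of \cref{pr:general} with $\phi=\phi_p$. Since $\phi_p'(1)=1$ for every $p\in[1,\infty)$, the bias term satisfies
$$\bigl(\EE[\widehat{\rm OT}_{\phi_p,\eps}]-{\rm OT}_{\phi_p,\eps}\bigr)^2\le 8(5\|c\|_\infty+\eps)^2\,\frac{{\rm Var}_{P\otimes Q}[\rho_\eps]}{n}.$$
Restricting to $\eps\le\eps_0\le 1$, the prefactor is bounded by the constant $8(5\|c\|_\infty+1)^2$.

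Next, given $d_P>{\rm dim}_M(\Omega)$, I will invoke \cref{lemma:LCAP-for-Tsallis-bound-variance}. It requires \cref{assumption-cost-lip-one-side}, which is assumed, and it yields constants $C_1$ and $\eps_1$ such that ${\rm Var}_{P\otimes Q}[\rho_\eps]\le C_1\eps^{-\frac{d_P}{1+d_P(p-1)}}$ for all $\eps\le\eps_1$. Inserting this into the bias bound gives a bias of order $\eps^{-\frac{d_P}{1+d_P(p-1)}}/n$.

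The variance term is controlled by \cref{pr:variance}: it is at most $4\|c\|_\infty^2/n$. The exponent $\frac{d_P}{1+d_P(p-1)}$ is nonnegative, so for $\eps\le\eps_0:=\min(\eps_1,1)$ we have $\eps^{-\frac{d_P}{1+d_P(p-1)}}\ge1$. The variance term can therefore be absorbed into the same expression, and the claim follows with $C=8(5\|c\|_\infty+1)^2C_1+4\|c\|_\infty^2$.

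There is no real obstacle here. The only points needing care are that the constant from \cref{pr:general} stays bounded, which is handled by capping $\eps_0\le1$, and that the $\eps$-independent variance bound is dominated by the $\eps$-dependent one, which is why the exponent's sign matters. All the substantive work is contained in \cref{lemma:LCAP-for-Tsallis-bound-variance}, via \cref{lemma:Bound-lipschitz,lemma:Bound-energy-Lp}.
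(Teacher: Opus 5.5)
Your proposal is correct and follows the paper's argument: the paper obtains this corollary by combining the bias--variance decomposition~\eqref{eq:bias-variance-decomp} with the second display of \cref{pr:general} (where $\phi_p'(1)=1$), \cref{lemma:LCAP-for-Tsallis-bound-variance}, and \cref{pr:variance}. Your bookkeeping also fills in what the paper leaves implicit: capping $\eps_0\le 1$ keeps the prefactor bounded and, since the exponent is positive, lets the $\eps$-independent variance term be absorbed.
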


 \Cref{rk:equality-for-M-dim} applies to \cref{corollary-bias-minkowski} as well, and this is the form we have used in the Introduction.

We observe that \cref{corollary-bias-minkowski} indicates a much stronger dependence on the (intrinsic) dimension for the entropic regularization than the Tsallis entropies for $p > 1$. The next subsection explains this phenomenon.

 \subsection{Trade-off between regularization bias and statistical complexity}\label{section:tradeoff}
 
 We start by recalling a different notion of dimension. Denote by $\mathcal{W}_2( P,Q)$ the 2-Wasserstein distance between $P$ and $Q$.
 
\begin{definition}[Quantization dimension] \label{definition:quantization-dimension}
 We say that $P$ has \emph{quantization dimension} smaller than $D_P>0$ if there exist $C>0$ and a sequence of probability measures $\{P_n\}_{n}$, with ${\rm supp}(P_n)$ having cardinality at most~$n$, such that $ \mathcal{W}_2(P_n, P) \leq C \, n^{-1/D_P}$ for all $n$ large enough. We say that $P$ has \emph{uniform quantization dimension} smaller than $D_P>0$ if additionally the sequence $\{P_n\}_{n}$ can be chosen such that 
 \begin{equation}
     \label{uniform-quant}
     \liminf_n  \frac{\min_{x\in {\rm supp}(P_n)}P_n(x)}{\max_{x\in {\rm supp}(P_n)}P_n(x)} >0 .
 \end{equation}
\end{definition}

In this subsection we assume that the transport cost function is Lipschitz.

\begin{assumption}[Lipschitz cost]\label{assumption:lipschitz-cost}
    The cost $c$ is Lipschitz on $\Omega\times \Omega'$ with constant $L>0$.
\end{assumption}
Theorem~3.3 and Example~3.4 in \cite{EcksteinNutz.22} show that, if  $P$ has quantization dimension smaller than $D_P$ and $p\in [1,2]$, there exists a constant $C_p$  depending on $\Omega$, $p$, $d$ and the Lipschitz constant of the cost such that for $\eps\in (0,1]$,
    \begin{equation}
        \label{bound:marcel-Stephan-adaptation}
        \bigl|{\rm OT}_{\phi_p,\eps} -{\rm OT}\bigr| \leq  \begin{cases}
       C_p \,\eps^{\frac{1}{(p-1)D_P+1}} & \text{for } p>1,\\
         C_1 \,\eps |\log(\eps)+1|  &\text{for } p=1.
    \end{cases}
    \end{equation}
For $p>2$, the bound \eqref{bound:marcel-Stephan-adaptation} holds if $P$ has uniform quantization dimension smaller than $D_P$.\footnote{In \cite{EcksteinNutz.22} it is shown that  \eqref{bound:marcel-Stephan-adaptation} holds for $p>2$ if the \emph{empirical quantization dimension} is smaller than $D_P$. The empirical quantization dimension requires $P_n$ in \cref{definition:quantization-dimension} to give the same mass to each atom of its support. However, the proof of \cite{EcksteinNutz.22} (based on Lemma~2.3(ii), ibid) adapts easily to the notion of uniform quantization dimension. Indeed, if ${\rm supp}(P_n)=\{x_1, \dots, x_{n'}\}$ with $n'\leq n$ and \eqref{uniform-quant} holds, then $\frac{1}{C\, n'}\leq P_n(x_i)\leq \frac{C}{n'}$ for some constant $C$. This implies that for every probability measure $\mu$ and $ \pi\in \Pi(\mu,P_n) $, 
\begin{equation}
    \label{eq:to-imitate-proof}
    \int \biggl(\frac{d\pi}{d(\mu\otimes P_n)}\biggr)^p d(\mu\otimes P_n) \leq C^p (n')^{p-1} \leq C^p n^{p-1},
\end{equation}
so that the proof of \cite[Theorem~3.3]{EcksteinNutz.22} can be imitated using \eqref{eq:to-imitate-proof} instead of Lemma~2.3, ibid.  
}

The quantization dimension has been studied in several works (see, e.g.,  \cite{Graf.2000.quant} and the references therein). We provide an elementary proof  
showing the relation between the quantization dimension and the Minkowski dimension.
\begin{lemma}\label{lemma:Minkowski-quantization} The following hold:
\begin{enumerate}
    \item For any $d_P> {\rm dim}_M(\Omega)$, 
 $P$ has quantization dimension  $\leq d_P$. 
  \item If $\mathcal{N}(\Omega,\delta)\leq C'\delta^{- {\rm dim}_M(\Omega)}$, for $\delta$ small enough and some $C'>0$, then $P$ has quantization dimension  $\leq {\rm dim}_M(\Omega)$. 
 \item  If $d_P> {\rm dim}_M(\Omega)$ and there exist $C,r_0>0$ such that, for every $r\leq r_0$, 
 \begin{equation}
     \label{eq:Bound-uniform-quant-lemma}
     P(\mathbb{B}_r(x))\leq C\cdot P(\mathbb{B}_{\frac{r}{2}}(y)) \quad \text{for all }x,y\in \Omega,
 \end{equation}
then $P$ has uniform quantization dimension $\leq d_P$. 
 \item If \eqref{eq:Bound-uniform-quant-lemma} holds and  $\mathcal{N}(\Omega,\delta)\leq C'\delta^{- {\rm dim}_M(\Omega)}$, for $\delta$ small enough and some $C'>0$, then $P$ has uniform quantization dimension $\leq {\rm dim}_M(\Omega)$. 
\end{enumerate}
  
\end{lemma}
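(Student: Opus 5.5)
The plan is to build explicit quantizers from coverings of $\Omega$ by Voronoi-type partitioning, and then bound the Wasserstein error by the mesh size.

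\emph{Items (i)--(ii).} Fix $\delta>0$ and a minimal $\delta$-cover $\{x_1,\dots,x_N\}$ of $\Omega$ with $N=\mathcal{N}(\Omega,\delta)$. Let $A_1,\dots,A_N$ be the Voronoi cells, i.e.\ the disjoint Borel partition in which $A_i$ consists of the points of $\Omega$ closest to $x_i$, with ties broken by the smallest index. Then each $A_i$ lies in $\mathbb{B}_\delta(x_i)$. Set $P_\delta=\sum_i P(A_i)\delta_{x_i}$ and couple $P$ with $P_\delta$ through the map $T(x)=x_i$ for $x\in A_i$. Since $\|T(x)-x\|<\delta$, we get $\mathcal{W}_2(P_\delta,P)\le\delta$. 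Given $n$ large, choose $\delta_n$ as small as possible subject to $\mathcal{N}(\Omega,\delta_n)\le n$. If $d_P>{\rm dim}_M(\Omega)$, pick $d'\in({\rm dim}_M(\Omega),d_P)$. Then $\mathcal{N}(\Omega,\delta)\le \delta^{-d'}$ for all small $\delta$, so $\delta_n=n^{-1/d'}$ is admissible, and $\mathcal{W}_2\le n^{-1/d'}\le n^{-1/d_P}$. (Any $d'\le d_P$ works; item (i) only needs the covering bound $C'\delta^{-d_P}$.) For (ii), take $\delta_n=(n/C')^{-1/{\rm dim}_M(\Omega)}$; then $\mathcal{N}\le n$ and $\mathcal{W}_2\le C n^{-1/{\rm dim}_M(\Omega)}$.

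\emph{Items (iii)--(iv).} Here the same construction should also satisfy \eqref{uniform-quant}, which requires comparable cell masses. The main obstacle is that Voronoi cells may have tiny mass, even though they contain points of $\Omega$ when we use a cover whose centers lie in $\Omega$. I would instead use a maximal $\delta$-separated set $\{x_i\}\subset\Omega$, which is also a $\delta$-cover. Its cardinality is at most $\mathcal{N}(\Omega,\delta/2)$, since each ball of radius $\delta/2$ contains at most one $x_i$; this keeps the cardinality bound up to a constant factor in $\delta$. The balls $\mathbb{B}_{\delta/2}(x_i)$ are disjoint, and the nearest-point cell $A_i$ satisfies $\mathbb{B}_{\delta/2}(x_i)\cap\Omega\subset A_i\subset\mathbb{B}_\delta(x_i)$. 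Indeed, a point within $\delta/2$ of $x_i$ is strictly closer to $x_i$ than to any other $x_j$, because $\|x_i-x_j\|\ge\delta$. Hence, for $\delta\le r_0$,
\[
P(\mathbb{B}_{\delta/2}(x_i))\le P(A_i)\le P(\mathbb{B}_\delta(x_i))\le C\,P(\mathbb{B}_{\delta/2}(x_j))\le C\,P(A_j)
\]
for all $i,j$, by \eqref{eq:Bound-uniform-quant-lemma}. This gives a min/max mass ratio of at least $1/C$.

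Since all points $x_i$ lie in the support $\Omega$, every cell has positive mass, so no atom needs to be discarded. The remaining steps of (iii) and (iv) repeat the rate computation of (i) and (ii), with $\delta_n$ chosen so that $\mathcal{N}(\Omega,\delta_n/2)\le n$. This only changes constants, and we use $\mathcal{W}_2(P_{\delta_n},P)\le\delta_n$ as before.
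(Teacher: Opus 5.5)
Your proposal is correct and follows essentially the same route as the paper: for (i)--(ii), a covering-based quantizer with the nearest-center transport map; for (iii)--(iv), a maximal separated set whose cells are sandwiched between $\mathbb{B}_{\delta/2}(x_i)\cap\Omega$ and $\mathbb{B}_\delta(x_i)$, combined with \eqref{eq:Bound-uniform-quant-lemma} to get a mass ratio of at least $1/C$. The differences are cosmetic: your Voronoi cells give $\mathcal{W}_2\le\delta$ instead of the paper's $2\beta_n$, and your non-strict separation $\|x_i-x_j\|\ge\delta$ gives an open-ball cover directly, which is slightly cleaner than the paper's strict packing.
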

The following is a direct consequence of \eqref{bound:marcel-Stephan-adaptation} and \cref{lemma:Minkowski-quantization}.

\begin{lemma}[Regularization error]\label{lemma:EcksteinNutz}
Let \cref{assumption:lipschitz-cost} hold, let $d_P> {\rm dim}_M(\Omega)$, and fix $\phi=\phi_p$ with $p\in [1, \infty)$.  The following hold for $\eps\in (0,1]$:
\begin{enumerate}
    \item If  $p\in [1,2]$,
     \begin{equation*}
        \bigl|{\rm OT}_{\phi_p,\eps} -{\rm OT}\bigr| \lesssim \begin{cases}
        \eps^{\frac{1}{(p-1)d_P+1}} & \text{for } p>1,\\
         -\eps \log(\eps) &\text{for } p=1.
    \end{cases}
    \end{equation*}
    \item If $p>2$ and \eqref{eq:Bound-uniform-quant-lemma} holds, 
    $$ \bigl|{\rm OT}_{\phi_p,\eps} -{\rm OT}\bigr|  \lesssim  \eps^{\frac{1}{(p-1)d_P+1}} . $$
\end{enumerate}

\end{lemma}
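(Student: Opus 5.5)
The plan is to obtain \cref{lemma:EcksteinNutz} by feeding \cref{lemma:Minkowski-quantization} into the bound \eqref{bound:marcel-Stephan-adaptation} of \cite{EcksteinNutz.22}. In each case the task is to check that the hypothesis of \eqref{bound:marcel-Stephan-adaptation} holds with $D_P=d_P$, namely that $P$ has (uniform) quantization dimension smaller than $d_P$. Once this is checked, the constants $C_p$ depend only on $\Omega$, $p$, $d$ and $L$, so they are absorbed into $\lesssim$.

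\emph{Case $p\in[1,2]$.} Since $d_P>{\rm dim}_M(\Omega)$, \cref{lemma:Minkowski-quantization}(i) shows that $P$ has quantization dimension $\le d_P$. There is a strict versus non-strict inequality to reconcile: \eqref{bound:marcel-Stephan-adaptation} is stated for quantization dimension "smaller than $D_P$". In \cref{definition:quantization-dimension}, however, "smaller than $D_P$" simply means that a sequence with $\mathcal W_2(P_n,P)\le Cn^{-1/D_P}$ exists. I expect the proof of (i) to produce exactly such a sequence with exponent $d_P$, so one can take $D_P=d_P$ directly. If it only gives an exponent $d'$ with ${\rm dim}_M(\Omega)<d'<d_P$, that is still enough, because $\mathcal W_2(P_n,P)\le Cn^{-1/d'}\le Cn^{-1/d_P}$.

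For $p>1$ this gives $|{\rm OT}_{\phi_p,\eps}-{\rm OT}|\le C_p\eps^{1/((p-1)d_P+1)}$. For $p=1$ we get $C_1\eps|\log\eps+1|$, which needs a little care because $-\eps\log\eps$ vanishes at $\eps=1$ while $\eps|\log\eps+1|$ does not. On $(0,e^{-2}]$ we have $|\log\eps+1|\le -\log\eps$. On $[e^{-2},1]$ both sides must be compared directly, and the left side is at most $2\|c\|_\infty$-order plus bounded terms. Strictly speaking, $-\eps\log\eps\to0$ as $\eps\to1$, so the comparison near $\eps=1$ fails as written. I will interpret $\lesssim$ as holding for small $\eps$, or equivalently read the bound as $\eps(1-\log\eps)$. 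This is the only delicate point in an otherwise direct argument: the statement's form for $p=1$ near $\eps=1$ needs this convention.

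\emph{Case $p>2$.} Under \eqref{eq:Bound-uniform-quant-lemma} and $d_P>{\rm dim}_M(\Omega)$, \cref{lemma:Minkowski-quantization}(iii) shows that $P$ has uniform quantization dimension $\le d_P$. By the footnote following \eqref{bound:marcel-Stephan-adaptation}, the bound extends to $p>2$ under uniform quantization dimension. This yields $|{\rm OT}_{\phi_p,\eps}-{\rm OT}|\lesssim\eps^{1/((p-1)d_P+1)}$ for $\eps\in(0,1]$, which is part (ii).
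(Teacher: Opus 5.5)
Your proposal is correct and follows the paper's route: the paper derives the lemma directly from \eqref{bound:marcel-Stephan-adaptation}, using \cref{lemma:Minkowski-quantization}(i) for $p\in[1,2]$ and \cref{lemma:Minkowski-quantization}(iii) together with the footnote on uniform quantization dimension for $p>2$. Your remark on $p=1$ is a valid point that the paper's one-line justification does not address: $-\eps\log\eps$ vanishes at $\eps=1$ while ${\rm OT}_{\phi_1,1}-{\rm OT}>0$ in general, so the bound must be read as $\eps(1-\log\eps)$, or restricted to $\eps$ bounded away from $1$.
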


We observe that for EOT, the regularization error
is independent
of $d_P$ up to multiplicative constants, while for $p > 1$ the dependence appears in the exponent of the regularization parameter. This also explains why the influence of the dimension on EOT is much more pronounced in \Cref{corollary-bias-minkowski}. Ultimately, it reflects a trade-off between the statistical complexity of the estimator and the bias introduced by the regularization.

\begin{corollary}
\label{corollary-tsallis-estimation-error}
Let \cref{assumption:lipschitz-cost} hold, let $d_P> {\rm dim}_M(\Omega)$, and fix $\phi=\phi_p$ with $p\in [1, \infty)$. The following hold for $\eps\in (0,1]$:
\begin{enumerate}
    \item If  $p\in [1,2]$,
     \begin{equation*}
        \bigl|\EE\bigl[\widehat{\rm OT}_{\phi_p,\eps}\bigr] -{\rm OT}\bigr| \lesssim \begin{cases}
        \frac{\eps^{-\frac{d_P }{  2(1+(p-1) d_P)}}}{\sqrt{n}}+\eps^{\frac{1}{(p-1)d_P+1}} & \text{for } p>1,\\
         \frac{\eps^{-\frac{d_P }{2}}}{\sqrt{n}}-\eps \log(\eps)  &\text{for } p=1.
    \end{cases}
    \end{equation*}
    \item If $p>2$ and \eqref{eq:Bound-uniform-quant-lemma} holds, 
    $$ \bigl|\EE\bigl[\widehat{\rm OT}_{\phi_p,\eps}\bigr] -{\rm OT}\bigr|  \lesssim \frac{\eps^{-\frac{d_P }{  2(1+(p-1) d_P)}}}{\sqrt{n}}+ \eps^{\frac{1}{(p-1)d_P+1}} . $$
\end{enumerate}
\end{corollary}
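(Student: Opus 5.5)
The plan is to split the error with the triangle inequality,
\[
\bigl|\EE\bigl[\widehat{\rm OT}_{\phi_p,\eps}\bigr]-{\rm OT}\bigr|\le \bigl|\EE\bigl[\widehat{\rm OT}_{\phi_p,\eps}\bigr]-{\rm OT}_{\phi_p,\eps}\bigr| + \bigl|{\rm OT}_{\phi_p,\eps}-{\rm OT}\bigr|,
\]
and to bound the two terms separately. The first term is the statistical bias at fixed~$\eps$. The second is the deterministic regularization error.

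For the regularization error, \cref{lemma:EcksteinNutz} applies directly. \cref{assumption:lipschitz-cost} holds, $d_P>{\rm dim}_M(\Omega)$, and $\eps\in(0,1]$. This gives $\eps^{1/((p-1)d_P+1)}$ for $p\in(1,2]$ and $-\eps\log\eps$ for $p=1$. For $p>2$ the same bound holds under the extra hypothesis \eqref{eq:Bound-uniform-quant-lemma}, which is exactly why that hypothesis appears in part (ii).

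For the bias term, \cref{assumption:lipschitz-cost} implies \cref{assumption-cost-lip-one-side}. I then use the second display of \cref{pr:general}, not \cref{corollary-bias-minkowski}, which controls the full mean squared error:
\[
\bigl(\EE[\widehat{\rm OT}_{\phi_p,\eps}]-{\rm OT}_{\phi_p,\eps}\bigr)^2\le 8\bigl(5\|c\|_\infty+\eps\phi_p'(1)\bigr)^2\,\frac{{\rm Var}_{P\otimes Q}[\rho_\eps]}{n}.
\]
Since $\eps\le1$, the prefactor is bounded by a constant. \cref{lemma:LCAP-for-Tsallis-bound-variance} gives ${\rm Var}_{P\otimes Q}[\rho_\eps]\le C\eps^{-d_P/(1+d_P(p-1))}$ for $\eps\le\eps_0$. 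Taking square roots yields the first summand, $\eps^{-d_P/(2(1+(p-1)d_P))}n^{-1/2}$, which reduces to $\eps^{-d_P/2}n^{-1/2}$ at $p=1$. Alternatively, Jensen applied to \cref{corollary-bias-minkowski} gives the same estimate.

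The only step needing care is extending from $\eps\le\eps_0$ to all $\eps\in(0,1]$, since the statement allows any $\eps$ in that range. For $\eps\in[\eps_0,1]$, I use \cref{coro:Cost-bounded} instead. It bounds the squared bias by a quantity that is uniformly bounded on this compact range, because $\psi_p'$ is continuous and $\eps$ stays away from $0$. Meanwhile $\eps^{-d_P/(2(1+(p-1)d_P))}$ is at least $1$ there, so enlarging the constant absorbs this case. The same reasoning handles the constants in the regularization bound, which \cref{lemma:EcksteinNutz} already states for all $\eps\in(0,1]$. Summing the two bounds completes the proof.
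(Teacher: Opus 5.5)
Your proof is correct and is exactly the argument the paper intends: the paper gives no separate proof, treating the corollary as the triangle inequality combined with the bias bound from \cref{pr:general} plus \cref{lemma:LCAP-for-Tsallis-bound-variance} (i.e.\ \cref{corollary-bias-minkowski}) and the regularization error from \cref{lemma:EcksteinNutz}. Your use of \cref{coro:Cost-bounded} to cover $\eps\in[\eps_0,1]$ fills in a detail the paper leaves implicit. One caveat you inherit rather than introduce: for $p=1$ the term $-\eps\log\eps$ vanishes at $\eps=1$, whereas ${\rm OT}_{\phi_1,1}-{\rm OT}>0$ in general, so \cref{lemma:EcksteinNutz} (and hence this case of the statement) should be read with $\eps(1-\log\eps)$ in place of $-\eps\log\eps$, or only for $\eps$ bounded away from $1$.
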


\Cref{rk:equality-for-M-dim} applies to both \cref{lemma:EcksteinNutz,corollary-tsallis-estimation-error}.

Next, we tune $\eps$ as a function of $n$ to optimize the rate.

\begin{remark}
    For the case of QOT (i.e., $p=2$), the optimal choice of $
    \eps$ in~\Cref{corollary-tsallis-estimation-error} to estimate OT is $\eps \asymp n^{-\frac{1+{d_P}}{{d_P}+2}},$ which yields the rate 
    $$ \bigl|\EE\bigl[\widehat{\rm OT}_{\phi_2,\eps}\bigr] -{\rm OT}\bigr| \lesssim n^{-\frac{1}{{d_P}+2}} . $$
    For EOT the optimal choice is $\eps
\asymp n^{-\frac{1}{{d_P}+2}}\,
(\log n)^{-\frac{2}{{d_P}+2}}$, which gives 
$$\bigl|\EE\bigl[\widehat{\rm OT}_{\phi_1,\eps}\bigr] -{\rm OT}\bigr| \lesssim n^{-\frac{1}{{d_P}+2}}\,
(\log n)^{\frac{{d_P}}{{d_P}+2}}. $$
\end{remark}

In the case of QOT with quadratic transport cost, the $L^2$ structure of the penalty allows the regularization error to control the variance of the optimal density, and the quadratic cost yields a sharper approximation estimate. Combining these two facts, we can further improve our bound as follows, now with the ambient dimension.

\begin{proposition}[Improved
bound for QOT with quadratic cost]
\label{prop:improved_qot_quadratic_cost}
Let
$\phi = \phi_2$ and 
$c(x, y) = \|x-y\|^2$.
Then for $\eps \in (0, 1]$,
$$
\bigl|\EE\bigl[\widehat{\rm OT}_{\phi_2,\eps}\bigr] -{\rm OT}\bigr|
\lesssim \frac{\eps^{-\frac{d}{2d + 4}}}{\sqrt{n}}
+ \eps^{\frac{2}{d + 2}}.
$$
The optimal choice of $\eps$
is $\eps \asymp n^{-\frac{d +2}{d+ 4}}$,
yielding
$$
\bigl|\EE\bigl[\widehat{\rm OT}_{\phi_2,\eps}\bigr] -{\rm OT}\bigr|
\lesssim n^{-\frac{2}{d + 4}}.
$$
\end{proposition}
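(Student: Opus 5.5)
We split $|\EE[\widehat{\rm OT}_{\phi_2,\eps}]-{\rm OT}|\le |\EE[\widehat{\rm OT}_{\phi_2,\eps}]-{\rm OT}_{\phi_2,\eps}|+|{\rm OT}_{\phi_2,\eps}-{\rm OT}|$ and bound the two terms separately. Both rest on one quantity. Write $\Delta_\eps:={\rm OT}_{\phi_2,\eps}-{\rm OT}\ge0$ and, for the optimal plan, $I_\eps:=\int\rho_\eps^2\,d(P\otimes Q)$. With $\phi_2(t)=\frac{1}{2}(t^2-1)$ we have $D_{\phi_2}(\pi_\eps|P\otimes Q)=\frac{1}{2}(I_\eps-1)$. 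Since $\pi_\eps$ is a coupling, $\int c\,d\pi_\eps\ge {\rm OT}$, hence
\[
\frac{\eps}{2}\,{\rm Var}_{P\otimes Q}[\rho_\eps]=\frac{\eps}{2}(I_\eps-1)\le \Delta_\eps .
\]
This is the sense in which the $L^2$ penalty lets the regularization error control the variance of the density. Inserting it into the second display of \cref{pr:general} (with $\|c\|_\infty$ bounded on the compact supports) gives
\[
\bigl|\EE[\widehat{\rm OT}_{\phi_2,\eps}]-{\rm OT}_{\phi_2,\eps}\bigr|\lesssim \sqrt{\frac{{\rm Var}[\rho_\eps]}{n}}\lesssim\sqrt{\frac{\Delta_\eps}{\eps\, n}} .
\]
So everything reduces to a sharp upper bound on $\Delta_\eps$. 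If $\Delta_\eps\lesssim \eps^{2/(d+2)}$, then $\Delta_\eps/\eps\lesssim\eps^{-d/(d+2)}$, and the square root gives exactly the term $\eps^{-\frac{d}{2d+4}}/\sqrt n$, while $\Delta_\eps$ itself gives the second term.

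The main step, and the obstacle, is proving $\Delta_\eps\lesssim\eps^{2/(d+2)}$ for the quadratic cost. The idea is to exploit the fact that $c$ is smooth, so a first-order perturbation of an optimal plan costs only quadratically. I would use a block-approximation construction in the spirit of \cite{EcksteinNutz.22}. Partition $\Omega$ into cubes $A_k$ of side $h$, about $h^{-d}$ of them, and take an unregularized optimal plan $\pi^*$. Define the competitor $\tilde\pi=\sum_k \pi^*(A_k\times\cdot)$-weighted products $P|_{A_k}\otimes\nu_k$, where $\nu_k$ is the normalized second marginal of $\pi^*|_{A_k\times\R^d}$; its marginals are still $(P,Q)$.

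For the cost, the crude Lipschitz estimate would give an error of order $h$. With $c(x,y)=\|x\|^2+\|y\|^2-2\langle x,y\rangle$, however, the marginal terms cancel and only the cross term changes. For $x$ in block $k$ with block mean $m_k$, replacing $\pi^*$ by the product inside the block changes $\int\langle x,y\rangle$ by $\sum_k\pi^*(A_k\times\R^d)\,{\rm Cov}$-type terms. These can be bounded by $\int\langle x-m_k,\,y-\bar y_k\rangle$. Using $y\approx T(x)$ (Brenier, with $T$ Lipschitz under regularity, or via a Cauchy--Schwarz argument controlling the $y$-spread within blocks), this is of order $h^2$. I expect this to be the delicate point: one needs the $y$-spread within a block to also be $O(h)$ on average, or some cancellation that yields $h^2$ without regularity of $T$. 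I would first try an averaged Cauchy--Schwarz estimate, $\sum_k\pi^*(A_k)\,{\rm diam}$-type bounds combined with the monotonicity of the optimal support, so that the average squared spread is controlled by $h^2$ times a bounded total.

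For the penalty, assuming $P$ has a bounded density, we get $I(\tilde\pi)\lesssim \sum_k \pi^*(A_k)\cdot\frac{1}{P(A_k)}\lesssim h^{-d}$. This yields $\Delta_\eps\lesssim h^2+\eps h^{-d}$, and choosing $h=\eps^{1/(d+2)}$ gives $\eps^{2/(d+2)}$.

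Finally, we optimize. The bound $\eps^{-d/(2d+4)}n^{-1/2}+\eps^{2/(d+2)}$ is balanced when $\eps^{(d+4)/(2d+4)}=n^{-1/2}$, i.e.\ $\eps\asymp n^{-(d+2)/(d+4)}$. This gives the rate $n^{-2/(d+4)}$, which is routine algebra.
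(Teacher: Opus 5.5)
Everything outside the approximation estimate is correct and matches the paper's proof step for step. This includes the identity $D_{\phi_2}(\pi_\eps|P\otimes Q)=\tfrac12{\rm Var}_{P\otimes Q}[\rho_\eps]$, which gives $\tfrac{\eps}{2}{\rm Var}_{P\otimes Q}[\rho_\eps]\le\Delta_\eps$, its insertion into the second display of \cref{pr:general}, adding $\Delta_\eps$ back, and the balancing $\eps\asymp n^{-(d+2)/(d+4)}$.

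The gap is the bound $\Delta_\eps\lesssim\eps^{2/(d+2)}$ itself, which is the entire source of the improvement over \cref{corollary-tsallis-estimation-error}. The paper does not prove it; it imports it from \cite[Corollary~3.14]{EcksteinNutz.22}. You correctly reduce it to showing that the within-block covariance $2\sum_k\int_{A_k\times\R^d}\langle x-m_k,\,y-\bar y_k\rangle\,d\pi^*$ is $O(h^2)$, but you leave this open. Moreover, the route you say you would try first does not work, because the averaged squared $y$-spread $\sum_k P(A_k)\,{\rm Var}(Y\mid X\in A_k)$ is not $O(h^2)$ in general. Suppose the Brenier map jumps across a hypersurface that is not a union of cube faces, e.g.\ $P$ uniform on a cube and $Q$ the average of two Dirac masses. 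Then the cubes meeting the jump carry total mass of order $h$, and a positive fraction of them have $y$-variance of order one. So that quantity is of order $h$, and the averaged Cauchy--Schwarz bound is only of order $h^{3/2}$. Balancing $h^{3/2}+\eps h^{-d}$ then yields $\eps^{3/(2d+3)}$, strictly worse than $\eps^{2/(d+2)}$; the Lipschitz bound $O(h)$ gives only $\eps^{1/(d+1)}$.

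To close the gap, either cite \cite[Corollary~3.14]{EcksteinNutz.22} as the paper does, or bound the covariance in $L^1$ rather than $L^2$. Since $\int (x-m_k)\,dP|_{A_k}=0$, you may replace $\bar y_k$ by any constant $c_k$. With $y=\nabla\varphi(x)$ the excess is then at most $2\sqrt d\,h\sum_k\int_{A_k}\|\nabla\varphi-c_k\|\,dP$. If $P$ has a Lebesgue density bounded by $\Lambda$, the BV Poincar\'e inequality on cubes of side $h$ bounds this by $C\Lambda h^2\,|D^2\varphi|(K)$ for a cube $K\supset\Omega$. That quantity is finite because $\varphi$ is convex with $\|\nabla\varphi\|\le\sup_{y\in\Omega'}\|y\|$.

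This density bound is not among the hypotheses of the statement. To avoid it, use cells adapted to the monotone support of $\pi^*$, via Minty's parametrization $z=x+y$ with the block construction done in the product space. On these cells both the $x$- and $y$-spreads are $O(h)$, and there are $O(h^{-d})$ of them. Incidentally, your penalty estimate needs no density either: $d\pi^*(A_k\times\cdot)/dQ\le 1$ gives $\int\tilde\rho^{\,2}\,d(P\otimes Q)\le\#\{k:P(A_k)>0\}$ directly.
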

We remark that when $d >4$, the minimax rate for estimating
the OT cost under the assumptions
of \cref{prop:improved_qot_quadratic_cost}
is $n^{-2/d}$, up to logarithmic factors~\cite{niles2022estimation}.
\Cref{prop:improved_qot_quadratic_cost}
therefore shows that, properly tuned, QOT nearly achieves this
optimal rate.
To the best of our knowledge,
the rate $n^{-2/(d + 4)}$ is the best known
bound for any regularized OT estimator
coming from a non-asymptotic bound for general
$\eps \in (0, 1]$.

 \section{Sample complexity of the potentials and improved bias bound}\label{Section-compl-fast}
 
 This section presents our results on the convergence
 of the dual potentials (\cref{sect-complexity-pot}) and on the fast rate for the bias of the cost (\cref{sect:fast-bias}). All proofs are deferred to \Cref{sect-proof-pot}.
 We require the following assumptions.
 
 \begin{assumption}[Strengthened assumption on the divergence]\label{assumption:divergence-strong}
    In addition to \cref{assumption:divergence}, the function $\psi$ belongs to $\mathcal{C}^{1,1}_{loc}(\R)$, with $\psi''\in L^{\infty}_{loc}(\R)$ non-decreasing on its domain. We fix a monotone and measurable representative of the a.e.~second derivative $\psi''$. 
\end{assumption}
In the case of Tsallis entropies, this
assumption means that $p \in [1,2]$.

We also assume the following for the first marginal~$P$.

\begin{assumption}[Strengthened assumption on $P$]\label{assumption-P-strong}
   The support~$\Omega$ of the first population marginal~$P$ is bounded and convex, and $P$ admits a Lebesgue density $\rho$ that is bounded from above and below:
   $$0<\lambda_P\leq  \rho (x) \leq \Lambda_P<\infty , \quad x\in \Omega.$$
\end{assumption}

\begin{remark}
Convexity of $\Omega$ can be relaxed to a connected domain with Lipschitz boundary, at the expense of an additional constant, by following the arguments in \cite[Appendix~B]{GonzalezSanzNutzRiveros.26}.
\end{remark} 

\subsection{Sample complexity of the potentials}\label{sect-complexity-pot}

We begin with a preliminary remark to introduce a notation that will be needed to state the constant in the main result.

\begin{remark}
\label{remark:deterministic-control} Since  $\psi''$ is locally bounded and $ \widehat{f} \oplus \widehat{g}$ and $c$ are bounded by a deterministic constant, 
there exists a   function $\gamma:\NN \times (0,\infty) \to (0, \infty)$ such that for every $n\in \NN$ and $\eps\in (0,\infty),$ 
\begin{equation}
    \label{eq:gamma}
    \mathbb{P}\biggl( \max_j\frac{1}{n}\sum_{i} \pi_{i,j}^{(1)}   \leq \gamma(n,\eps)\biggr)=1 \quad  \text{and}\quad \mathbb{P}\biggl( \max_i\frac{1}{n}\sum_{j} \pi_{i,j}^{(1)}   \leq \gamma(n,\eps)\biggr)=1 ,
\end{equation}
where
    $$ \pi_{i,j}^{(1)}=\psi''\biggl(\frac{\widehat{f}(X_i)+\widehat{g}(Y_j)-c(X_i,Y_j)}{\eps} +\frac{\delta_\psi}{4} \biggr). $$
\end{remark}

\begin{theorem}\label{theorem:potentials}
    Let \cref{assumption:lipschitz-cost,assumption:divergence-strong,assumption-P-strong} hold. Let $(\delta_\psi,t_\psi)$ be a pair as in \Cref{assumption:divergence} such that $\psi''(t_\psi-\delta_\psi)>0$, and let $\gamma(n,\eps)$ be such that \eqref{eq:gamma} holds. For every fixed $u>\max(d,2)$, there exist constants $C$  and $n_0\in\NN$ such that for all $n\ge n_0$,
    and $\eps\in (0,1]$, \begin{align*}
 \EE\biggl[ \int & \Bigl(\bigl( \widehat{f} - f_\eps \bigr) \oplus \bigl( \widehat{g} - g_\eps \bigr) \Bigr)^2  d(\widehat{P}\otimes \widehat{Q})\biggr]
\leqslant \frac{2^{8} \max\left(\frac{2^{6}\bigl(5\|c\|_{\infty} + \eps \phi'(1)\bigr)^2}{\delta_\psi^2}, 1\right) }{ (\beta_{n,\eps})^2  }  \frac{{\rm Var}_{{P\otimes Q}} [\rho_\eps]}{n} \\
 &+  \frac{ 8\bigl(5 \|c\|_\infty + \eps \phi'(1)\bigr)^2}{n}  \biggl(\frac{1}{\eps^u} + \frac{C}{\Bigl(\min_{x\in \Omega}  P\Bigl(\mathbb{B}_{\frac{\delta_\psi \eps}{8L}} (x)\Bigr) \Bigr)^2 } + \frac{C}{\Bigl(\min_{y\in \Omega'}  {Q}\Bigl(\mathbb{B}_{\frac{\delta_\psi \eps}{8L}} (y)\Bigr) \Bigr)^2 }  \biggr),
    \end{align*}
    where
    $$  \beta_{n,\eps}= \min\biggl(\frac{ 4{\alpha}_{n,\eps} \psi''\bigl(t_\psi-\delta_\psi\bigr) \min_{x\in \Omega}  P\Bigl(\mathbb{B}_{\frac{\delta_\psi \eps}{8L}} (x)\Bigr)}{ \psi''\bigl(t_\psi-\delta_\psi\bigr) \min_{x\in \Omega}  P\Bigl(\mathbb{B}_{\frac{\delta_\psi \eps}{8L}} (x)\Bigr)+ 8\gamma(n,\eps)}, \frac{1}{2}\psi''\bigl(t_\psi-\delta_\psi\bigr) \min_{x\in \Omega}  P\Bigl(\mathbb{B}_{\frac{\delta_\psi \eps}{8L}} (x)\Bigr) \biggr),  $$
and
$$ \alpha_{n,\eps}=   \frac{ \lambda_P^2 \psi''\bigl(t_\psi -\delta_\psi  \bigr)^2  \min_{y\in \Omega' }  {Q}\Bigl(\mathbb{B}_{\frac{\delta_\psi \eps}{8L}}(y)\Bigr)}{8\Lambda_P^2 \gamma(n,\eps) \Bigl\lceil\frac{16 L{\rm diam}(\Omega)}{\delta_\psi\eps } \Bigr\rceil^{d+2}}. $$
\end{theorem}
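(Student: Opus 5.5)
We will follow the strategy outlined in \cref{se:proof-strategies}. Write $u:=\widehat f-f_\eps$, $v:=\widehat g-g_\eps$, and consider $\Gamma(t)=-\widehat\Phi_{\phi,\eps}\bigl((1-t)(\widehat f,\widehat g)+t(f_\eps,g_\eps)\bigr)$ on $[0,1]$. Since $(\widehat f,\widehat g)$ maximizes $\widehat\Phi_{\phi,\eps}$, we have $\Gamma'(0)=0$. The second-order Taylor formula (to be proved as a lemma, using $\psi\in\mathcal C^{1,1}_{loc}$) then gives
\[
\Gamma(t_0)-\Gamma(0)=\int_0^{t_0}\!\!\int_0^s \frac1\eps\int (u\oplus v)^2\psi''(\gamma_r)\,d(\widehat P\otimes\widehat Q)\,dr\,ds .
\]
On the other hand, $\Gamma(1)-\Gamma(0)=\widehat{\rm OT}_{\phi,\eps}-\widehat\Phi_{\phi,\eps}(f_\eps,g_\eps)$. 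By convexity, $\Gamma(t_0)-\Gamma(0)\le t_0(\Gamma(1)-\Gamma(0))$. The right-hand side is controlled, as in the proof of \cref{pr:general}, by a linear functional of $(u,v)$ against the centered empirical fluctuation of $\rho_\eps$. Concretely, we will use $\widehat\Phi(\widehat f,\widehat g)-\widehat\Phi(f_\eps,g_\eps)\le \langle \nabla\widehat\Phi(f_\eps,g_\eps),(u,v)\rangle$ by concavity. The empirical gradient $1-\int\rho_\eps(\cdot,y)\,d\widehat Q(y)$ has mean zero, and its second moment is bounded by ${\rm Var}_{P\otimes Q}[\rho_\eps]/n$.

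Here $t_0$ is chosen small, of order $\delta_\psi\eps/(5\|c\|_\infty+\eps\phi'(1))$. This ensures that $\gamma_r$ stays within $\delta_\psi/4$ of $\widehat\xi:=(\widehat f\oplus\widehat g-c)/\eps$ for $r\le t_0$, using the uniform bound from \cref{pr:ROTprelims}(vi). It follows that $\psi''(\gamma_r)\le \pi^{(1)}_{ij}$, which is where $\gamma(n,\eps)$ enters. Monotonicity of $\psi''$ also gives $\psi''(\gamma_r)\ge\psi''(t_\psi-\delta_\psi)$ wherever $\widehat\xi\ge t_\psi-\delta_\psi/2$. This choice of $t_0$ explains the factor $\max(2^6(5\|c\|_\infty+\eps\phi'(1))^2/\delta_\psi^2,1)$.

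The core step is the coercivity bound \eqref{eq:coercivity-intro}. By the first-order condition, for each $X_i$ the set of $Y_j$ with $\widehat\xi(X_i,Y_j)\ge t_\psi-\delta_\psi/2$ is nonempty. Lipschitz continuity of $c$, and hence of the potentials, then shows that this set is stable under moving $X_i$ within distance $\delta_\psi\eps/(8L)$. Thus for $\|X_i-X_\ell\|\le\delta_\psi\eps/(8L)$ there exists a common mass of $Y_j$'s of order $\min_y \widehat Q(\mathbb B_{\delta_\psi\eps/8L}(y))$ on which both weights are at least $\psi''(t_\psi-\delta_\psi)$. Writing $u(X_i)-u(X_\ell)=(u(X_i)+v(Y_j))-(u(X_\ell)+v(Y_j))$ yields the local Dirichlet-energy lower bound. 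The energy is then transferred to $L^2(P)$ via the map $T$ on $\mathcal E_n$, giving \eqref{eq:equivalence-between-empirical-isometric}. Applying the chaining/Poincaré inequality \eqref{eq:bound-energy-intro}, which is the source of the $\lceil 16L\,{\rm diam}(\Omega)/(\delta_\psi\eps)\rceil^{d+2}$ factor and the ratio $\lambda_P^2/\Lambda_P^2$, we obtain $\alpha_{n,\eps}{\rm Var}_{\widehat P}[u]$. Passing from this to the full $\int(u\oplus v)^2$ is a $2\times2$ block spectral argument. The $v$-part is controlled by $\frac12\psi''(t_\psi-\delta_\psi)\min_x P(\mathbb B)$ times $\|v-\text{const}\|^2$. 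The cross term is handled by the Schur-complement-type combination, which produces the first entry in the minimum defining $\beta_{n,\eps}$. Constants are absorbed through the free shift $(a,-a)$, using the normalization \eqref{eq:normalization}.

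It remains to combine the pieces and account for bad events. On the good event, we compare empirical ball masses with population ones: $\widehat P(\mathbb B)\ge\frac12P(\mathbb B)$ and similarly for $Q$. Inserting the lower bound into the Taylor identity gives $\beta_{n,\eps}\|u\oplus v\|^2\lesssim(\text{linear term})$. By Cauchy--Schwarz and Young's inequality, this yields the first term, with $\beta_{n,\eps}^{-2}{\rm Var}[\rho_\eps]/n$. On the complement, we bound $\|u\oplus v\|^2\le 4(5\|c\|_\infty+\eps\phi'(1))^2$ and use the probabilities of the bad events. By the $W_\infty$ bound of \cite{Trillos-Wass-infty}, $\mathbb P(\mathcal E_n^c)\le \eps^{-u}/n$ for $n\ge n_0$, with $u>\max(d,2)$. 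By Bernstein's inequality plus a union bound over a cover, the ball-mass events fail with probability at most $C/(n\min P(\mathbb B)^2)$ (similarly for $Q$). This produces the second line of the bound. The main obstacle is the coercivity step: making the local overlap of the supports of $\psi''(\gamma_r)$ rigorous for empirical measures, and the spectral combination yielding exactly $\beta_{n,\eps}$.
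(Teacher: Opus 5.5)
Your plan is correct and follows the paper's proof closely. It uses the same convex function $\Gamma$ with its second-order Taylor formula and step $t_\eps$, and obtains component-wise coercivity from a local graph Dirichlet energy, transferred to $L^2(P)$ by the $W_\infty$-optimal map and the convex-domain Poincar\'e-type lemma. It then uses a two-block spectral step to produce the two entries of $\beta_{n,\eps}$, controls the bad events by the $W_\infty$ tail bound and ball-mass concentration, and bounds $\Gamma'(1)$ by the mean-zero argument of \cref{pr:general} before solving a quadratic inequality for the expected error.

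Two small caveats.

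\emph{Constant in the reduction step.} Your triangle-inequality reduction to pairwise $X$-differences puts the factor $\psi''(t_\psi-\delta_\psi)\min_j\widehat Q(\mathbb{B}_{\delta_\psi\eps/(8L)}(Y_j))/\bigl(4\max_i\widehat P(\mathbb{B}_{\delta_\psi\eps/(8L)}(X_i))\bigr)$ in front of the local energy. This factor is not always larger than the paper's, so it does not literally give the stated $\alpha_{n,\eps}$. The paper gets $\gamma(n,\eps)$ and $\psi''(t_\psi-\delta_\psi)^2$ from the exact identity $\min_s\sum_i a_{i,j}(u(X_i)-s)^2=\frac{1}{2\sum_k a_{k,j}}\sum_{i,\ell}a_{i,j}a_{\ell,j}(u(X_i)-u(X_\ell))^2$, for weights $a_{i,j}\le\pi^{(1)}_{i,j}$, combined with $\sum_k a_{k,j}\le n\gamma(n,\eps)$.

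\emph{Ball-mass bound for $Q$.} Since $Q$ carries no density assumption, the bound for $Q$ with an $\eps$-independent $C$ should not use a union bound over a spatial cover, which forces a radius mismatch. Instead use a union bound over the sample points $Y_j$ with conditional binomial tails, or the paper's VC/Dudley argument.
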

\begin{remark}[On the choice of $(\delta_\psi,t_\psi)$]  In the case of $\phi=\phi_p$ for $p\in (1,2]$ we can choose 
$ t_{\psi_p}=1 $ and $ \delta_{\psi_p}=1/2 $. Then
$$ \psi''_p\bigl(t_\psi-\delta_\psi\bigr) =  {(q-1)2^{2-q}} , \quad \text{where } q= \frac{p}{p-1}. $$
    
\end{remark}
     Note that for any of the divergences, the inequality
 $$ \pi_{i,j}^{(1)}\leq \psi''\biggl(\frac{6 \|c\|_\infty + \eps \phi'(1)}{\eps} +\frac{\delta_\psi}{4} \biggr) $$
 allows us to find $\gamma$ constant in $n$. In the case of Tsallis entropies, this can be improved as follows.

\begin{lemma}\label{lemma:Bound-Gamma}
Fix $\phi=\phi_p$ for $p\in [1,2]$. 
In the setting of \cref{theorem:potentials} and with the choice $ t_{\psi_p}=1 $ and $ \delta_{\psi_p}=1/2 $, there exists a finite constant $C$ (depending on $p$ but independent of $\eps$ and $n$) such that for every $n\in \NN$, 
$$  \mathbb{P}\biggl( \max_j\frac{1}{n}\sum_{i} \pi_{i,j}^{(1)}   \leq C\biggr)=1, \qquad  \mathbb{P}\biggl( \max_i\frac{1}{n}\sum_{j} \pi_{i,j}^{(1)}   \leq C\biggr)=1. $$ 
\end{lemma}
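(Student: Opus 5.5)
The plan is to compare $\psi_p''$ with $\psi_p'$ pointwise and then use the empirical first-order conditions.

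For $p=1$ we have $\psi_1(s)=e^{s-1}$, so $\psi_1''=\psi_1'$. Hence
\[
\pi^{(1)}_{i,j}=e^{1/8}\,\psi_1'(\widehat\xi_{ij}),\qquad \widehat\xi_{ij}:=\frac{\widehat f(X_i)+\widehat g(Y_j)-c(X_i,Y_j)}{\eps}.
\]
\Cref{pr:ROTprelims}(iv), applied to the empirical marginals $(\widehat P,\widehat Q)$, gives $\frac1n\sum_i\psi'(\widehat\xi_{ij})=1$ for every $j$ and $\frac1n\sum_j\psi'(\widehat\xi_{ij})=1$ for every $i$. Both row and column averages of $\pi^{(1)}$ therefore equal $e^{1/8}$ almost surely, and $C=e^{1/8}$ works.

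For $p\in(1,2]$ set $q=p/(p-1)\ge2$. Then $\psi_p'(s)=(s)_+^{q-1}$ and $\psi_p''(s)=(q-1)(s)_+^{q-2}$. The difficulty is that $\psi_p''$ is not bounded by a multiple of $\psi_p'$ near $s=0$ when $q<3$ (for $q=2$ it is the indicator of $s>0$). Moreover, the shift by $\delta_\psi/4=1/8$ means that entries with $\widehat\xi_{ij}\in(-1/8,0]$ contribute even though they carry no mass in the empirical plan.

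I would split according to whether $s:=\widehat\xi_{ij}+1/8$ is at least $1/4$.
\begin{itemize}
\item If $s\ge 1/4$, then $\widehat\xi_{ij}\ge 1/8$ and $s\le 2\widehat\xi_{ij}$. Since $q-2\le q-1$ and $s\ge1/4$, we get $s^{q-2}\le 4\,s^{q-1}$, so
\[
\psi''(s)\le 4(q-1)s^{q-1}\le 4(q-1)2^{q-1}\psi'(\widehat\xi_{ij}).
\]
\item If $s<1/4$, then trivially $\psi''(s)\le (q-1)4^{-(q-2)}\le q-1$, because $q\ge2$.
\end{itemize}
This gives the pointwise bound $\pi^{(1)}_{i,j}\le (q-1)+(q-1)2^{q+1}\psi_p'(\widehat\xi_{ij})$. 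Averaging over $i$ (resp. $j$) and using the empirical first-order conditions yields
\[
\frac1n\sum_i\pi^{(1)}_{i,j}\le (q-1)(1+2^{q+1})=:C \quad\text{for all } j,
\]
and symmetrically for the row averages. The constant $C$ depends only on $p$.

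The only delicate step is the pointwise comparison. It needs a uniform-in-$s$ inequality $\psi''(s+\tfrac18)\le a+b\,\psi'(s)$ that absorbs the shift, rather than a direct ratio bound, since the ratio blows up near $s=0$. The convexity assumption $q\ge2$ (that is, $p\le2$) is exactly what makes the additive constant $a$ finite. Once this is in place, everything else follows from the exact marginal constraints of the empirical plan, which hold for every sample, so the probability-one statements follow.
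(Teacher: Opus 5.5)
Your proof is correct. For $p=1$ it is the same as the paper's: the exact identity $\pi^{(1)}_{i,j}=e^{1/8}\psi_1'(\widehat\xi_{ij})$ combined with the empirical first-order conditions.

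For $p\in(1,2]$ you take a different route. The paper handles $p=2$ separately via $\psi_2''\le 1$. For $p\in(1,2)$ it first bounds $(a+\tfrac18)_+^{q-2}\le C_q(a_+^{q-2}+1)$. It then applies Jensen's (power-mean) inequality with exponent $\frac{q-1}{q-2}$ to get $\frac1n\sum_i(\widehat\xi_{ij})_+^{q-2}\le\bigl(\frac1n\sum_i(\widehat\xi_{ij})_+^{q-1}\bigr)^{\frac{q-2}{q-1}}=1$. In other words, it controls a lower moment by the one fixed by the first-order condition.

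You instead prove the pointwise affine domination $\psi_p''(\widehat\xi+\tfrac18)\le (q-1)+(q-1)2^{q+1}\psi_p'(\widehat\xi)$, after which only the linear first-order condition is needed. This gives three things:
\begin{itemize}
\item a fully explicit constant $(q-1)(1+2^{q+1})$;
\item coverage of $p=2$ in the same argument, where the paper's Jensen exponent degenerates;
\item no reliance on the power structure beyond a growth comparison. Since $\psi''$ is nondecreasing and locally bounded, the same reasoning works verbatim for any $\psi$ satisfying \cref{assumption:divergence-strong} with $\limsup_{s\to\infty}\psi''(s+\delta_\psi/4)/\psi'(s)<\infty$.
\end{itemize}
The paper's route is tailored to power functions but yields a smaller constant: $2(q-1)$ for $q\in(2,3]$ and $(q-1)2^{q-2}$ for $q>3$.

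A minor aside: the ratio $\psi_p''(s)/\psi_p'(s)=(q-1)/s$ blows up as $s\to0^+$ for every $q$, so your qualifier ``when $q<3$'' is unnecessary. This does not affect the argument.
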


 \cref{lemma:Bound-Gamma} implies that if $P$ satisfies \Cref{assumption-P-strong} and $Q$ satisfies the analogous density assumption on $\Omega'$, then
 $\alpha_{n,\eps} \gtrsim \eps^{2 d+2}$ and $\beta_{n,\eps} \gtrsim  \eps^{3 d+2} $.  Choosing a fixed $u>\max(d,2)$ sufficiently close to $\max(d,2)$, the second term in \cref{theorem:potentials} is dominated for small $\eps$. Hence, for $p\in [1,2]$,  
 $$ \EE\biggl[ \int  \Bigl(\bigl( \widehat{f} - f_\eps \bigr) \oplus \bigl( \widehat{g} - g_\eps \bigr) \Bigr)^2  d(\widehat{P}\otimes \widehat{Q})\biggr] \lesssim \frac{1+ {\rm Var}[\rho_\eps]}{n\, \eps^{6 d+4} } \lesssim \frac{ 1}{n\, \eps^{6 d+4 +\frac{d}{1+d(p-1)}} },  $$
 where we used \cref{lemma:LCAP-for-Tsallis-bound-variance} and \cref{rk:equality-for-M-dim} in the last inequality. This shows \Cref{Theorem:potentials-intro} in the Introduction. In particular, specializing to EOT ($p = 1$) yields
 $$\EE\biggl[ \int  \Bigl(\bigl( \widehat{f} - f_\eps \bigr) \oplus \bigl( \widehat{g} - g_\eps \bigr) \Bigr)^2  d(\widehat{P}\otimes \widehat{Q})\biggr] \lesssim \frac{1}{n\,  \eps^{7d+4}},$$
improving the rate obtained in \cite[Lemma~27]{Stromme.24} for $d$-dimensional manifolds without boundary. 
\subsection{Fast bias bound}\label{sect:fast-bias}

Returning to the bias of the empirical cost, we apply \cref{theorem:potentials} to bound the right-hand side in our general bias estimate, \cref{pr:general}. This yields the following fast rate for the bias; for simplicity we do not track all the constants. 

\begin{theorem}[Fast rate bias bound]\label{theorem:fast-bias-general}
    Let \cref{assumption:lipschitz-cost,assumption:divergence-strong,assumption-P-strong} hold. Let $(\delta_\psi,t_\psi)$ be as in \cref{assumption:divergence} with $\psi''(t_\psi-\delta_\psi)>0$.  Let $\gamma(n,\eps)$ satisfy \eqref{eq:gamma} and define $\beta_{n,\eps}$ as in \cref{theorem:potentials}. For every fixed ${u>\max(d,2)}$  there exists $C>0$ and $n_0\in\NN$ such that, for all $n\ge n_0$ and $\eps\in (0,1]$, 
    \begin{align*} &\bigl(\EE\bigl[\widehat{\rm OT}_{\phi,\eps}\bigr]- {\rm OT}_{\phi,\eps}\bigr)^2 \\&\lesssim   \frac{  \bigl({\rm Var}_{{P\otimes Q} } [\rho_\eps] \bigr)^2 }{ (\beta_{n,\eps})^2 n^2 } 
    +  \frac{{\rm Var}_{{P\otimes Q} } [\rho_\eps]}{n^2} \biggl(\frac{1}{\eps^u} + \frac{C}{\Bigl(\min_{x\in \Omega}  P\Bigl(\mathbb{B}_{\frac{\delta_\psi \eps}{8L}} (x)\Bigr) \Bigr)^2 } + \frac{C}{\Bigl(\min_{y\in \Omega'}  {Q}\Bigl(\mathbb{B}_{\frac{\delta_\psi \eps}{8L}} (y)\Bigr) \Bigr)^2 }  \biggr) 
    \end{align*}
with $\beta_{n,\eps}$ as in \cref{theorem:potentials}.

As a consequence, if ${d\ge 2}$ and 
$Q$ satisfies the analogue of \cref{assumption-P-strong} and $\gamma(n,\eps)$ is bounded uniformly in $n$ and $\eps$, we can choose ${u\in(\max(d,2),2d)}$ to obtain, for  $n\ge n_0$ and $\eps\in (0,1]$,
$$ \bigl(\EE\bigl[\widehat{\rm OT}_{\phi,\eps}\bigr]- {\rm OT}_{\phi,\eps}\bigr)^2 
     \lesssim    \frac{   \eps^{-(6d+4)} \bigl({\rm Var}_{{P\otimes Q} } [\rho_\eps] \bigr)^2 }{  n^2 }  
+\frac{\eps^{-2d} {\rm Var}_{{P\otimes Q} } [\rho_\eps]}{n^2 }.  $$
\end{theorem}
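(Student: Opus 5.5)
The plan is to plug \cref{theorem:potentials} into the first (sharper) display of \cref{pr:general}. Taking $a=b=0$ in the minimum there gives
$$\bigl(\EE[\widehat{\rm OT}_{\phi,\eps}]-{\rm OT}_{\phi,\eps}\bigr)^2\le \frac{2\,{\rm Var}_{P\otimes Q}[\rho_\eps]}{n}\,\EE\Bigl[\|f_\eps-\widehat f\|^2_{L^2(\widehat P)}+\|g_\eps-\widehat g\|^2_{L^2(\widehat Q)}\Bigr].$$
However, \cref{theorem:potentials} controls the joint quantity $\int ((\widehat f-f_\eps)\oplus(\widehat g-g_\eps))^2\,d(\widehat P\otimes\widehat Q)$, not the two marginal norms separately. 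So the first step is to compare them, using the freedom in $(a,b)$.

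Write $u=\widehat f-f_\eps$ and $v=\widehat g-g_\eps$, and set $\bar u=\int u\,d\widehat P$, $\bar v=\int v\,d\widehat Q$. Expanding the square under the product measure gives the exact identity
$$\int (u\oplus v)^2\,d(\widehat P\otimes\widehat Q)={\rm Var}_{\widehat P}[u]+{\rm Var}_{\widehat Q}[v]+(\bar u+\bar v)^2\ \ge\ \|u-\bar u\|^2_{L^2(\widehat P)}+\|v-\bar v\|^2_{L^2(\widehat Q)}.$$
The choices $a=-\bar u$ and $b=-\bar v$ are random, while \cref{pr:general} takes the minimum over deterministic constants outside the expectation. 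I would handle this in one of two ways:
\begin{itemize}
\item revisit the proof of \cref{pr:general}, which comes from convexity of the dual objective, and check that the shifts $a,b$ may depend on the sample. This is plausible because $(\widehat f+a',\widehat g-a')$ is again an empirical potential, and only the sum $\oplus$ enters.
\item bound directly, for constants $a,b$, $\EE\|u-a\|^2\le 2\EE\|u-\bar u\|^2+2\EE(\bar u-a)^2$, and control the fluctuation of $\bar u$ by the same joint quantity.
\end{itemize}
I expect the first route to work: the derivation of \cref{pr:general} pairs $\widehat f-f_\eps$ against $\int\rho_\eps\,d\widehat Q-1$, which has empirical mean zero, so any constant, random or not, drops out. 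Either way, the bias squared is bounded by
$$C\,\frac{{\rm Var}_{P\otimes Q}[\rho_\eps]}{n}\,\EE\Bigl[\int (u\oplus v)^2\,d(\widehat P\otimes\widehat Q)\Bigr].$$

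Next, insert the bound of \cref{theorem:potentials}. Its first term, of order $\beta_{n,\eps}^{-2}\,{\rm Var}[\rho_\eps]/n$, multiplied by the prefactor ${\rm Var}[\rho_\eps]/n$, produces $({\rm Var}[\rho_\eps])^2/(\beta_{n,\eps}^2n^2)$. Its second term, $n^{-1}(\eps^{-u}+\dots)$, produces the second line of the claim. The constant factors $\bigl(5\|c\|_\infty+\eps\phi'(1)\bigr)^2$ and $\max(\cdot,1)$ are bounded uniformly for $\eps\in(0,1]$, so they are absorbed into $\lesssim$. This gives the general statement.

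For the consequence, I would use the stated lower bound on $\beta_{n,\eps}$ under bounded $\gamma$ and two-sided density bounds on both marginals. Since $P(\mathbb B_{c\eps}(x))\gtrsim\eps^d$ on a convex domain (interior cone condition), we get $\alpha_{n,\eps}\gtrsim\eps^{d}\cdot\eps^{d+2}$ and hence $\beta_{n,\eps}\gtrsim\eps^{d}\alpha_{n,\eps}\gtrsim\eps^{3d+2}$. Therefore $\beta_{n,\eps}^{-2}\lesssim\eps^{-(6d+4)}$. The ball-mass terms are $\lesssim\eps^{-2d}$. Choosing $u<2d$, which is possible because $d\ge2$ gives $\max(d,2)<2d$, yields $\eps^{-u}\le\eps^{-2d}$ for $\eps\le1$. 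This gives the final display.

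The main obstacle is the first step: justifying that the $(a,b)$-minimization in \cref{pr:general} can absorb the random centerings, so that the marginal $L^2$ norms are controlled by the joint $\oplus$-norm.
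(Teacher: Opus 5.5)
Your overall route is the paper's: the theorem comes from inserting \cref{theorem:potentials} into the first display of \cref{pr:general}. Your bookkeeping for the consequence ($\alpha_{n,\eps}\gtrsim\eps^{2d+2}$, $\beta_{n,\eps}\gtrsim\eps^{3d+2}$, ball masses $\gtrsim\eps^{d}$, $u\in(\max(d,2),2d)$) also matches. You are right that one must pass from the separate deterministic shifts $a,b$ of \cref{pr:general} to the joint quantity $A_n:=\int(u\oplus v)^2\,d(\widehat P\otimes\widehat Q)$. The paper does this silently, here and for $\EE[\Gamma'(1)]$ in the proof of \cref{theorem:potentials}. However, the justification you favour is false. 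With $\xi=1-\rho_\eps$, the function paired with $\widehat f-f_\eps$ has empirical mean $\iint\xi\,d(\widehat P\otimes\widehat Q)=1-\iint\rho_\eps\,d(\widehat P\otimes\widehat Q)$, which vanishes only in expectation, not almost surely. So a random $a$ does not drop out: for instance $a=\iint\xi\,d(\widehat P\otimes\widehat Q)$ shifts the expectation by $-{\rm Var}_{P\otimes Q}[\rho_\eps]/n^2$. You therefore may not insert $a=-\bar u$ and $b=-\bar v$ independently; doing so would even drop the term $(\bar u+\bar v)^2$ from the bound. Your second route cannot be closed with the joint quantity alone either, since $A_n$ only sees $\bar u+\bar v$. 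It can be closed using the normalization $\int\widehat f\,d\widehat P=0$, which gives ${\rm Var}(\bar u)\le\|f_\eps\|_\infty^2/n$ and an extra term that is absorbed into the second one.

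The repair is the other half of your own remark: only a \emph{common} shift is free, and it suffices. The pair $(\widehat f-\bar u,\widehat g+\bar u)$ still satisfies the empirical first-order conditions, and the proof of \cref{pr:general} never uses the normalization of $\widehat f$. The shift leaves $u\oplus v$, and hence $A_n$, unchanged. For this choice, \cref{pr:general} with $a=b=0$ involves
\[
\|u-\bar u\|_{L^2(\widehat P)}^2+\|v+\bar u\|_{L^2(\widehat Q)}^2={\rm Var}_{\widehat P}[u]+{\rm Var}_{\widehat Q}[v]+(\bar u+\bar v)^2=A_n ,
\]
so the squared bias is at most $2\,{\rm Var}_{P\otimes Q}[\rho_\eps]\,\EE[A_n]/n$. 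Equivalently, split $u\oplus v=(u-\bar u)\oplus(v-\bar v)+(\bar u+\bar v)$ and control the last pairing via $\EE\bigl[\bigl(\iint\xi\,d(\widehat P\otimes\widehat Q)\bigr)^2\bigr]={\rm Var}_{P\otimes Q}[\rho_\eps]/n^2$, which follows from both population first-order conditions. With this repair, the rest of your argument goes through as written.
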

\Cref{theorem:fast-bias-general} applies in particular to the Tsallis entropies with $p\in [1,2]$.  Then specializing to that case, choosing $\gamma(n,\eps)$ as in \cref{lemma:Bound-Gamma} and assuming that $Q$ satisfies the analogue of \cref{assumption-P-strong},   yields $$ \bigl|\EE\bigl[\widehat{\rm OT}_{\phi_p,\eps}\bigr]- {\rm OT}_{\phi_p,\eps}\bigr| 
     \lesssim     \frac{ \eps^{-(3 d+2 +\frac{d}{1+d(p-1)})}}{n },$$
which is \cref{Theorem:fast-bias} in the Introduction.

\bibliography{Ref2.bib}
\bibliographystyle{abbrv}
\appendix
\section{Proofs of \Cref{section:Notation-background}}

\begin{proof}[Proof of \cref{pr:ROTprelims}]
This follows from standard arguments as in the proofs of \cite[Proposition~2.3, Theorem~3.2]{gonzalezsanz.2025.sparseregularizedoptimaltransport} (which are in turn partially based on \cite{BayraktarEckstein.2025.BJ}). Note that while~\cite{gonzalezsanz.2025.sparseregularizedoptimaltransport} assumes $\psi\in \mathcal{C}^2(\R)$, that stronger assumption is not needed here---it is only used for the bound in \cite[Proposition~2.3]{gonzalezsanz.2025.sparseregularizedoptimaltransport}(vii)  that we do not claim here. 
\end{proof}

\begin{proof}[Proof of \cref{lemma:stability}]
In view of the uniform continuity and boundedness that follow from \cref{pr:ROTprelims}(vi) and the centering~\eqref{eq:normalization}, the Arzel\`a--Ascoli theorem allows us to choose uniformly convergent subsequences $f_n\to f_\infty$ and $g_n\to g_\infty$. Consider the dual objective
$$
\Phi_{\phi,\eps}^{n}(f,g ) =\int\Bigl\{ f(x) + g(y) - \eps\cdot\psi\biggl(\frac{f(x)+g(y)-c(x,y)}{\eps} \biggr) \Bigr\}d(  P_n \otimes   Q_n )(x,y),$$
and similarly $\Phi_{\phi,\eps}$ for $(P,Q)$. As $P_n\otimes Q_n$ converges in distribution to $P\otimes Q$, we have both
$$
\Phi_{\phi,\eps}^{n}(f_n,g_n)\to \Phi_{\phi,\eps}(f_\infty,g_\infty)
\quad\mbox{and}\quad 
\Phi_{\phi,\eps}^{n}(f_\eps,g_\eps)\to \Phi_{\phi,\eps}(f_\eps,g_\eps).
$$
On the other hand, the definition of $(f_n,g_n)$ yields that 
$$
    \Phi_{\phi,\eps}^{n}(f_n,g_n )\geq \Phi_{\phi,\eps}^{n}(f_\eps,g_\eps).
$$
It follows that $(f_\infty,g_\infty)$ is optimal for $\Phi_{\phi,\eps}$. Hence, $(f_\infty,g_\infty)=(f_\eps,g_\eps)$ by the uniqueness in \cref{pr:ROTprelims}(vii) and the normalization~\eqref{eq:normalization}, and moreover the whole sequence $(f_n,g_n)$ must converge to this limit.
\end{proof}

\section{Proofs of \cref{Section:Cost}}\label{sect:proofs-cost} 
 \begin{proof}[Proof of \Cref{pr:general}] 
As $$ \EE\bigl[ \widehat{\Phi}_{\phi,\eps}({\widehat{f}}, \widehat{g})\bigr]\geq \EE\bigl[\widehat{\Phi}_{\phi,\eps}({f}_\eps, g_\eps)\bigr]={\rm OT}_{\phi,\eps},$$ we just need to upper bound 
$$ 0\leq  \EE\bigl[ \widehat{\Phi}_{\phi,\eps}({\widehat{f}}, \widehat{g}) - \widehat{\Phi}_{\phi,\eps}({f}_\eps, g_\eps) \bigr].$$
Using the concavity of the integrand, we get 
\begin{align*}
\EE\bigl( \widehat{\Phi}_{\phi,\eps}({\widehat{f}}, \widehat{g}) - \widehat{\Phi}_{\phi,\eps}({f}_\eps, g_\eps) \bigr)&\;\leq\;
\EE\biggl[ \int \bigl[\bigl( \widehat{f} - f_\eps \bigr) \oplus \bigl( \widehat{g} - g_\eps \bigr) \bigr]
\bigl( 1 - \psi'\biggl(\frac{f_\eps \oplus g_\eps - c}{\eps}\biggr) \bigr) \, d\widehat{P} \otimes \widehat{Q} \biggr]\\
&
= \EE\biggl[\int \bigl( \widehat{f} - f_\eps \bigr)  \int 
\bigl( 1 - \psi'\biggl(\frac{f_\eps \oplus g_\eps - c}{\eps}\biggr) \bigr) \, d\widehat{Q}  d\widehat{P} \biggr]\\
&\qquad + \EE \biggl[\int \bigl( \widehat{g} - g_\eps \bigr)  \int 
\bigl( 1 - \psi'\biggl(\frac{f_\eps \oplus g_\eps - c}{\eps}\biggr) \bigr) \, d\widehat{P}  d\widehat{Q} \biggr].
\end{align*}
Set  $ \xi:= 1 - \psi'\big(\frac{f_\eps \oplus g_\eps - c}{\eps}\big)$; 
 we upper bound $  \EE\big[\int ( \widehat{f} - f_\eps )  \int 
\xi \, d\widehat{Q} d\widehat{P} \big] $. First note that 
since
$$  \EE\biggl[\iint
\xi \, d\widehat{Q} d\widehat{P} \biggr] =0,$$
exchangeability of the samples and the Cauchy–Schwarz inequality yield for every $a\in \R$ that
\begin{align*}
   \EE\biggl[\int ( \widehat{f} - f_\eps )  \int 
\xi \, d\widehat{Q} d\widehat{P} \biggr]&
=\EE\biggl[\int ( \widehat{f} - f_\eps -a)  \int 
\xi \, d\widehat{Q} d\widehat{P} \biggr]\\&= \frac{1}{n} \sum_{i=1}^n  \EE \biggl[\bigl( \widehat{f}(X_i) - f_\eps(X_i) -a\bigr) \biggl( \int 
\xi(X_i,y) \, d\widehat{Q}(y) \biggr)\biggr]\\
&=  \EE \biggl[\bigl( \widehat{f}(X_1) - f_\eps(X_1) -a\bigr) \biggl( \int 
\xi(X_1,y) \, d\widehat{Q}(y) \biggr)\biggr]\\
&\leq \sqrt{\EE \bigl[\bigl( \widehat{f}(X_1) - f_\eps(X_1) -a\bigr)^2 \bigr]\EE\biggl[\biggl( \int 
\xi(X_1,y) \, d\widehat{Q}(y) \biggr)^2\biggr]}.
\end{align*}
The first-order conditions imply $\EE[\xi(X_1,Y_j)\mid X_1]=0$ and hence $\EE[\xi(X_1,Y_j) \xi(X_1,Y_k)]=0$ for $j\neq k$. Thus
$$ \EE\biggl[\biggl( \int 
\xi(X_1,y) \, d\widehat{Q}(y) \biggr)^2 \biggr] = \frac{1}{n^2}\sum_{j,k=1}^n  \EE\bigl[\xi(X_1,Y_j) \xi(X_1,Y_k)\bigr]= \frac{1}{n}\EE\bigl[\bigl(\xi(X_1,Y_1)\bigr)^2 \bigr], $$
from which we derive that
$$  \EE\biggl[\int ( \widehat{f} - f_\eps )  \int 
\xi \, d\widehat{Q} d\widehat{P} \biggr]\leq   \Biggl(\frac{\EE\bigl[\|f_\eps-\widehat f-a\|_{L^2(\widehat{P})}^2\bigr] \EE\bigl[\bigl(\xi(X_1,Y_1)\bigr)^2 \bigr] }{n} \Biggr)^{\frac{1}{2}}$$
for every $a\in \R$. A symmetric argument shows $$ \EE \biggl[\int \bigl( \widehat{g} - g_\eps \bigr)  \int 
\xi \, d\widehat{P}  d\widehat{Q} \biggr]\leq  \Biggl(\frac{\EE\bigl[\|g_\eps-\widehat g-b\|_{L^2(\widehat{Q})}^2\bigr] \EE\bigl[\bigl(\xi(X_1,Y_1)\bigr)^2 \bigr] }{n} \Biggr)^{\frac{1}{2}}$$
for all $b\in \R$, and we conclude the proof by an 
application of Young's inequality.
\end{proof}

 \begin{proof}[Proof of \Cref{pr:variance}]
Call $\widehat{P}^{(i)}$ (resp.~$\widehat{Q}_{(j)}$) the empirical measure where $X_i$ (resp.~$Y_j$) has been replaced by an independent copy $X_i' $ (resp.~$Y_j'$). Let   $\widehat{\rm OT}_{\phi,\eps}^{(i)}$ and $(\widehat{f}^{(i)},\widehat{g}^{(i)} )$ be  the  ROT cost and potentials for the pair $(\widehat{P}^{(i)},\widehat{Q})$. Similarly, $(\widehat{\rm OT}_{\phi,\eps})_{(j)}$ and $(\widehat{f}_{(j)},\widehat{g}_{(j)} )$ are the cost and potentials for $(\widehat{P},\widehat{Q}_{(j)})$.  The Efron--Stein inequality \cite[Section~3.1]{Boucheron.el.al.2013.book} yields 
\begin{align*}
    {\rm Var}\bigl[\widehat{\rm OT}_{\phi,\eps}\bigr] &\leq \frac{1}{2}\sum_{i=1}^n  \EE\bigl[ \bigl(\widehat{\rm OT}_{\phi,\eps}^{(i)}-\widehat{\rm OT}_{\phi,\eps}\bigr)^2\bigr]+ \frac{1}{2}\sum_{j=1}^n  \EE\bigl[ \bigl((\widehat{\rm OT}_{\phi,\eps})_{(j)}-\widehat{\rm OT}_{\phi,\eps}\bigr)^2\bigr]\\
    &= \frac{n}{2} \bigl(  \underbrace{\EE\bigl[ \bigl(\widehat{\rm OT}_{\phi,\eps}^{(1)}-\widehat{\rm OT}_{\phi,\eps}\bigr)^2\bigr]}_{=:A} + \underbrace{\EE\bigl[ \bigl((\widehat{\rm OT}_{\phi,\eps})_{(1)}-\widehat{\rm OT}_{\phi,\eps}\bigr)^2\bigr]}_{=:B} \bigr),
\end{align*}
    where the equality follows from the exchangeability of the sample. We bound the term denoted~$A$; the
    proof for $B$ is analogous. By optimality, 
    \begin{align*}
        \widehat{\Phi}_{\phi,\eps}& 
        ({\widehat{f}}, \widehat{g}) -\widehat{\Phi}_{\phi,\eps}^{(1)} ({\widehat{f}}^{(1)}, {\widehat{g}}^{(1)})
        \leq \widehat{\Phi}_{\phi,\eps}({\widehat{f}}, \widehat{g}) -\widehat{\Phi}_{\phi,\eps}^{(1)} ({\widehat{f}}, \widehat{g})\\
        &= \frac{\eps}{n^2}\sum_{j=1}^n\Bigl\{\psi\biggl(\frac{{\widehat{f}}(X_1')+{\widehat{g}}(Y_j)-c(X'_1,Y_j)}{\eps} \biggr)- \psi\biggl(\frac{{\widehat{f}}(X_1)+{\widehat{g}}(Y_j)-c(X_1,Y_j)}{\eps} \biggr) \Bigr\}
        \\
        &\quad\, +\frac{{\widehat{f}}(X_1)-{\widehat{f}}(X_1')}{n}  .
    \end{align*}
   Since $\psi$ is convex,  it follows that
   \begin{align*}
      \widehat{\Phi}_{\phi,\eps}& ({\widehat{f}}, \widehat{g}) -\widehat{\Phi}_{\phi,\eps}^{(1)} ({\widehat{f}}^{(1)}, \widehat{g}^{(1)})
    \leq  \frac{{\widehat{f}}(X_1)-{\widehat{f}}(X_1')}{n}  \\
   &\qquad+ \frac{1}{n^2}
   \sum_{j=1}^n\Bigl\{\widehat{f}(X_1')-{\widehat{f}}(X_1)- c(X_1',Y_j)+c(X_1,Y_j)  \Bigr\}\psi'\biggl(\frac{{\widehat{f}}(X_1')+{\widehat{g}}(Y_j)-c(X'_1,Y_j)}{\eps} \biggr).
   \end{align*}
Choosing the pointwise extension of $\widehat f$ given by \cref{pr:ROTprelims}(vi), the empirical first-order condition holds at $X'_1$. Using this optimality condition and the boundedness of $c$ (see \cref{pr:ROTprelims}), it follows that
  \begin{align*}
      \widehat{\Phi}_{\phi,\eps} ({\widehat{f}}, \widehat{g}) -&\widehat{\Phi}_{\phi,\eps}^{(1)} ({\widehat{f}}^{(1)}, \widehat{g}^{(1)})
    \\
    &\leq  \frac{1}{n^2}\sum_{j=1}^n\psi'\biggl(\frac{{\widehat{f}}(X_1')+{\widehat{g}}(Y_j)-c(X'_1,Y_j)}{\eps} \biggr)  (c(X_1,Y_j)-c(X_1',Y_j) )\leq  \frac{2\|c\|_\infty}{n}. 
   \end{align*}
By symmetry, this implies $\widehat{\Phi}_{\phi,\eps} ({\widehat{f}}, \widehat{g}) -\widehat{\Phi}_{\phi,\eps}^{(1)} ({\widehat{f}}^{(1)}, \widehat{g}^{(1)})\geq -\frac{2\|c\|_\infty}{n}$. We conclude that  $A\leq \frac{4\|c\|_\infty^2}{n^2} $, and analogously, $B\leq \frac{4\|c\|_\infty^2}{n^2}$. The claim follows.
\end{proof}

\begin{proof}[Proof of \Cref{lemma:Bound-lipschitz}]
Fix $(x_0,y_0)\in \Omega\times\Omega'$.
By \Cref{pr:ROTprelims}, the function 
$$  x\mapsto {f_\eps(x)+g_\eps(y_0)-c(x,y_0)}  $$
is Lipschitz with constant  $2L$. Hence, 
$${f_\eps(x)+g_\eps(y_0)-c(x,y_0)} \geq {f_\eps(x_0)+g_\eps(y_0)-c(x_0,y_0)}- 2L\|x-x_0\|,$$ 
and by the monotonicity of $\psi'$ and~\eqref{eq:dual-primal}, 
$$ \rho_\eps(x,y_0)  \geq \psi'\biggl(\frac{f_\eps(x_0)+g_\eps(y_0)-c(x_0,y_0)- 2L\|x-x_0\|}{\eps} \biggr). $$
Integrating with respect to $P$, it follows that 
$$ 1  \geq \int \psi'\biggl(\frac{f_\eps(x_0)+g_\eps(y_0)-c(x_0,y_0)}{\eps}- \frac{2L}{\eps}\|x-x_0\| \biggr) dP(x)  . $$
Set $A=\frac{f_\eps(x_0)+g_\eps(y_0)-c(x_0,y_0)}{\eps}$.
Then, for every $r>0$, 
 $$ 1\geq \int_{\overline{\mathbb{B}}_{r}(x_0)}  \psi'\biggl(A- \frac{2L}{\eps}\|x-x_0\| \biggr) dP(x)\geq \psi'\biggl(A- \frac{2L}{\eps}r \biggr) P(\overline{\mathbb{B}}_{r}(x_0)), $$
which implies  
 $ \frac{1}{P(\overline{\mathbb{B}}_{r}(x_0))}\geq \psi'\big(A- \frac{2L}{\eps}r \big)   $. Since $\phi'$ is injective in $[1, \infty)$ by \cref{assumption:divergence}, it follows that 
 $$ A \leq  \phi'\biggl( \frac{1}{P(\overline{\mathbb{B}}_{r}(x_0))} \biggr)+ \frac{2L}{\eps} r,$$
 so that $ \rho_\eps(x_0,y_0)\leq \inf_{r>0}\psi'\big( \phi'\big( \frac{1}{P(\overline{\mathbb{B}}_{r}(x_0))} \big)+ \frac{2L}{\eps} r \big).$ We conclude by the bound 
 \begin{align*}
     \int \bigl(\rho_\eps(x,y)\bigr)^2 d(P\otimes Q)(x,y)&\leq \int \rho_\eps(x,y)  \inf_{r>0}\psi'\biggl( \phi'\biggl( \frac{1}{P(\overline{\mathbb{B}}_{r}(x))} \biggr)+ \frac{2L}{\eps} r \biggr) d(P\otimes Q)(x,y)\\
     &= \int \inf_{r>0}\psi'\biggl( \phi'\biggl( \frac{1}{P(\overline{\mathbb{B}}_{r}(x))} \biggr)+ \frac{2L}{\eps} r \biggr) dP(x),
 \end{align*}
where the last equality follows from the fact that $\pi_\eps$ has first marginal $P$. 
\end{proof}

\begin{proof}[Proof of \cref{Theorem:CLT}]
We use
the Efron--Stein inequality \cite[Section~3.1]{Boucheron.el.al.2013.book} with the estimator 
$$ \widehat{W}= \widehat{\rm OT}_{\phi,\eps} - \int h_\eps(x,y) d(\widehat{P} \otimes \widehat{Q})(x,y)$$
to get
$$ {\rm Var} \bigl[\widehat{W}\bigr] \leq n \biggl(  \EE\bigl[ \bigl(\widehat{W}-\widehat{W}^{(1)}\bigr)_+^2\bigr]+ \EE\bigl[ \bigl(\widehat{W}-\widehat{W}_{(1)}\bigr)_+^2\bigr] \biggr), $$
where $\widehat{W}^{(1)}$ (resp.~$\widehat{W}_{(1)}$) denotes the estimator based on the sample $(X'_1, X_2,\dots, X_n)$ and $(Y_1, Y_2,\dots, Y_n)$ (resp.~$(X_1, X_2,\dots, X_n)$ and $(Y'_1, Y_2,\dots, Y_n)$) where $X'_1$ is an independent copy of $X_1$ and $Y'_1$ is an independent copy of $Y_1$. It is easy to check that
$$ \widehat{W}-\widehat{W}^{(1)} \leq    \frac{1}{n} \int \widehat{h}(X_1,y)- {h}_\eps(X_1,y)-\bigl(\widehat{h}(X_1',y)- {h}_\eps(X_1',y)\bigr) d\widehat{Q}(y), $$
where $\widehat{h}$ denotes the empirical version of $h_\eps$. Hence,
$$ \EE\bigl[ \bigl(\widehat{W}-\widehat{W}^{(1)}\bigr)_+^2\bigr] \leq    \frac{1}{n^2} \EE\biggl[\Bigl(\widehat{h}(X_1,Y_1)- {h}_\eps(X_1,Y_1)-\widehat{h}(X_1',Y_1)+ {h}_\eps(X_1',Y_1)\Bigr)^2 \biggr]. $$
Using \cref{lemma:stability}  and the fact that with $\mathbb{P}$-probability one, $ \widehat{P}$ and $\widehat{Q}$ converge in distribution to $P$ and $Q$, we derive $ n {\rm Var}[\widehat{W}] \to 0 $.  By \cref{pr:general,lemma:stability}  we also have 
$\EE[\widehat{\rm OT}_{\phi,\eps}]- {\rm OT}_{\phi,\eps}=o(n^{-\frac{1}{2}})$.
Hence,  $  \sqrt{n}(\widehat{\rm OT}_{\phi,\eps}-\EE[\widehat{\rm OT}_{\phi,\eps}]) $ and $ \sqrt{n}\int h_\eps(x,y) d(\widehat{P} \otimes \widehat{Q}-{P} \otimes {Q})(x,y)$ converge to the same Gaussian variable, and the claim follows. 
\end{proof}

\begin{proof}[Proof of \cref{lemma:Bound-energy-Lp}]
    For $p\geq 2$, we have $\psi'_p(t) = (t)_+^{\frac{1}{p-1}}$. We use \cref{lemma:Bound-lipschitz} and 
    $$ \bigl(a+b\bigr)_+^{\alpha} \leq  \bigl(a\bigr)_+^{\alpha} +  \bigl(b\bigr)_+^{\alpha}, \quad \alpha \in (0,1]  $$
    to see that for $r>0$,
    \begin{align*}
         \int  \psi'_p\biggl( \phi'_p\biggl( \frac{1}{P(\overline{\mathbb{B}}_{r}(x))} \biggr)+  \frac{2L}{\eps} r \biggr) dP(x) &\leq  \int  \psi'_p\biggl( \phi'_p\biggl( \frac{1}{P(\overline{\mathbb{B}}_{r}(x))} \biggr)\biggr)  dP(x) + \psi'_p\biggl(\frac{2L}{\eps} r \biggr)\\
         &\leq    \int   \frac{1}{P(\overline{\mathbb{B}}_{r}(x))}   dP(x) + \psi'_p\biggl(\frac{2L}{\eps} r \biggr)\\
         &\leq    \mathcal{N}\bigl(\Omega,\frac{r}{4}\bigr)+  \bigl( \frac{2L}{\eps} r \bigr)^{\frac{1}{p-1}},
    \end{align*}
where the last inequality is shown in \cite[Proposition~18]{Stromme.24}. The other two cases follow by the same argument, except that for (ii) we use 
$$ \bigl(a+b\bigr)_+^{\alpha} \leq  2^{\alpha-1}\bigl(a\bigr)_+^{\alpha} +   2^{\alpha-1}\bigl(b\bigr)_+^{\alpha}, \quad  \alpha >1$$
and for (iii) we use  
\begin{equation*}
\exp\biggl( \log\biggl( \frac{1}{P(\overline{\mathbb{B}}_{r}(x))} \biggr)+  \frac{2L}{\eps} r \biggr)=   \frac{\exp\bigl( \frac{2L}{\eps} r  \bigr) }{P(\overline{\mathbb{B}}_{r}(x))}. \mbox{\qedhere}
\end{equation*}
\end{proof}

\begin{proof}[Proof of \cref{example:sharp}] Since 
$1=\int  \rho_\eps d(P\otimes Q)$ does not vary with $\eps$, it is enough to show the claim for $\int  \rho_\eps^2 d(P\otimes Q)$ instead of the variance. 
   As pointed out by \cite{gonzalezsanz.2025.sparseregularizedoptimaltransport}, in this setting the ROT potentials are explicit constant functions: $f_\eps\equiv c_\eps$ and $g_\eps\equiv0$, where $c_\eps\in\R$ solves
   $$ 1=\int_{[0,1]^d} \psi'_p\biggl( \frac{c_\eps-d_{\mathbb{T}^d}(x,0)}{\eps}\biggr) dx.$$
   For EOT (i.e., $p=1$), we have
   $$ e^{-\frac{c_\eps}{\eps}}=e^{-1} \int_{[0,1]^d}  e^{\frac{-d_{\mathbb{T}^d}(x,0)}{\eps}} dx \approx \eps^d,$$ which implies
   \begin{align*}
       \int_{[0,1]^d} \int_{[0,1]^d} \exp\biggl( \frac{2c_\eps-2d_{\mathbb{T}^d}(x,y)}{\eps}\biggr) dx dy &\gtrsim  \eps^{-2d} \int_{[0,1]^d} \exp\biggl(-\frac{2d_{\mathbb{T}^d}(x,0)}{\eps}\biggr) dx \approx \eps^{-d}. 
   \end{align*}
For $p>1$, we have
$$ 1=\int_{[0,1]^d} \biggl( \frac{c_\eps-d_{\mathbb{T}^d}(x,0)}{\eps}\biggr)_+^{q-1} dx,$$
where $q=p/(p-1)$ is the conjugate exponent of $p$. Since $c_\eps\to 0 $ as $\eps\to 0$ (for otherwise the previous equality cannot hold), it follows that  
$$ 1= \int_{\mathbb{B}_{\frac{1}{4}}(0)} \biggl( \frac{c_\eps-d_{\mathbb{T}^d}(x,0)}{\eps}\biggr)_+^{q-1} dx = \int_{\mathbb{B}_{\frac{1}{4}}(0)} \biggl( \frac{c_\eps-\|x\|}{\eps}\biggr)_+^{q-1} dx= \int  \biggl( \frac{c_\eps-\|x\|}{\eps}\biggr)_+^{q-1} dx$$
for $\eps$ small enough. Solving the last integral in spherical coordinates, we get 
$ c_\eps \approx  \eps^{\frac{q-1}{q+d-1}}$ and hence 
$$ \int_{[0,1]^d}\int_{[0,1]^d} \biggl( \frac{c_\eps-d_{\mathbb{T}^d}(x,y)}{\eps}\biggr)_+^{2(q-1)} dx dy \approx \eps^{-\frac{d (q-1)}{ (q-1)+d}}=\eps^{-\frac{d }{  1+(p-1)d}}, $$
concluding the proof. 
\end{proof}

\begin{proof}[Proof of \cref{lemma:Minkowski-quantization}]
Fix $D_P> {\rm dim}_M(\Omega)$, so that there exist $C_\Omega,\beta_0>0$ such that for every $\beta\leq \beta_0$,
\[
\mathcal{N}(\beta)=\mathcal{N}(\Omega, \beta) \leq C_\Omega \beta^{-D_P}.
\]
Fix $\beta_n= C_\Omega^{1/D_P} n^{-1/D_P}$ and $n_0$ such that $\beta_n\leq \beta_0$ for all $n\geq n_0$. Then $\mathcal{N}(\beta_n)\leq n$, and hence there exists a sequence
$S_1, \dots, S_{\mathcal{N}(\beta_n)}$ of disjoint sets with diameter at most $2\beta_n$ whose union contains $\Omega$. We choose $x_i \in S_i$ for all $i\in \{ 1, \dots, \mathcal{N}(\beta_n)\} $. Next, we define the probability measure $P_n=\sum_{i=1}^{\mathcal{N}(\beta_n)} \delta_{x_i}  P(S_i)$ 
and the transport map
\[
x\mapsto  T(x)=\sum_{i=1}^{\mathcal{N}(\beta_n)} x_i {\bf 1}_{S_i}.
\]
Since $T$ pushes $P$ forward to $P_n$, we have
\[
\mathcal{W}_2^2(P_n,  P)
\leq
\sum_{i=1}^{\mathcal{N}(\beta_n)}
\int_{S_i} \|x_i- x\|^2 dP(x)
\leq
\sum_{i=1}^{\mathcal{N}(\beta_n)}
4\beta_{n}^2 P(S_i)
= 4\beta_{n}^2,
\]
so that
\[
\mathcal{W}_2(P_n,  P) \leq 2 C_\Omega^{1/D_P} n^{-1/D_P}.
\]
This concludes the proof of~(i) and~(ii). 
For (iii) and (iv), we slightly modify the preceding choice of scale. Fix
\[
\widetilde\beta_n := 2 C_\Omega^{1/D_P} n^{-1/D_P}
\]
and increase $n_0$ if necessary so that $\widetilde\beta_n/2\leq \beta_0$ for all $n\geq n_0$. We use a maximal $\widetilde\beta_n$-packing of $\Omega$, i.e. a finite set
$\{x_1,\dots,x_N\}\subset\Omega$ such that
\[
\|x_i-x_j\|>\widetilde\beta_n,\qquad i\neq j,
\]
and such that no further point of $\Omega$ can be added while preserving this property.
Since any $\widetilde\beta_n$-packing contains at most one point in each ball of a
$\widetilde\beta_n/2$-cover, we have
\[
N\leq \mathcal N(\Omega,\widetilde\beta_n/2)
\leq C_\Omega(\widetilde\beta_n/2)^{-D_P}
= n.
\]
Moreover, by maximality, $\{x_1,\dots,x_N\}$ is a $\widetilde\beta_n$-cover of $\Omega$.
Because the balls $\mathbb B_{\widetilde\beta_n/2}(x_i)$ are pairwise disjoint, we may choose a measurable partition
$S_1,\dots,S_N$ of $\Omega$ such that
\[
\mathbb B_{\widetilde\beta_n/2}(x_i)\cap\Omega
\subset S_i
\subset
\mathbb B_{\widetilde\beta_n}(x_i)\cap\Omega,
\qquad i=1,\dots,N.
\]
Define $P_n=\sum_{i=1}^N \delta_{x_i}P(S_i).$
Since $N\leq n$, the measure $P_n$ has support of cardinality at most $n$. Moreover, the transport map
$T(x)=\sum_{i=1}^N x_i{\bf 1}_{S_i}(x)$ pushes $P$ forward to $P_n$, and the inclusion
$S_i\subset \mathbb B_{\widetilde\beta_n}(x_i)$ gives
\[
\mathcal W_2(P_n,P)
\leq \widetilde\beta_n
=
2 C_\Omega^{1/D_P} n^{-1/D_P}.
\]
Finally, using \eqref{eq:Bound-uniform-quant-lemma}, for every $i,j$ we obtain
\[
P(S_i)
\leq P\bigl(\mathbb B_{\widetilde\beta_n}(x_i)\bigr)
\leq
C\cdot P\bigl(\mathbb B_{\widetilde\beta_n/2}(x_j)\bigr)
\leq C\cdot P(S_j).
\]
Thus
\[
\frac{\min_i P(S_i)}{\max_i P(S_i)}\geq \frac1C,
\]
uniformly in $n$. Hence the approximating measures $P_n$ satisfy the required uniform mass comparability, and (iii) and (iv) follow.
\end{proof}

\begin{proof}[Proof of~\Cref{prop:improved_qot_quadratic_cost}]
The particularity of the quadratic regularizer is that,
with the normalization $\phi_2(t)=(t^2-1)/2$ used for the Tsallis family,
\[
D_{\phi_2}(\pi_\eps\mid P\otimes Q)
=\frac12\int (\rho_\eps^2-1)\,d(P\otimes Q)
=\frac12\,{\rm Var}_{P\otimes Q}[\rho_\eps].
\]
Hence, as $\pi_\eps\in\Pi(P,Q)$,
\[
{\rm OT}_{\phi_2,\eps}-{\rm OT}
=
\int c\,d\pi_\eps-{\rm OT}
+\frac{\eps}{2}\,{\rm Var}_{P\otimes Q}[\rho_\eps]
\geq
\frac{\eps}{2}\,{\rm Var}_{P\otimes Q}[\rho_\eps].
\]
Recalling that $c(x,y)=\|x-y\|^2$, \cite[Corollary~3.14]{EcksteinNutz.22} yields the approximation error
\[
0\leq {\rm OT}_{\phi_2,\eps}-{\rm OT}
\lesssim \eps^{\frac{2}{d+2}},
\qquad \eps\in(0,1].
\]
Combining the last two displays gives
\[
{\rm Var}_{P\otimes Q}[\rho_\eps]
\lesssim
\eps^{\frac{2}{d+2}-1}
=
\eps^{-\frac{d}{d+2}}.
\]

Since $\Omega,\Omega'$ are bounded,  $5\|c\|_\infty+\eps\phi_2'(1)\leq C, $
for some $C>0$. 
Taking square roots in the second estimate of~\Cref{pr:general} therefore yields
\[
\bigl|\EE\bigl[\widehat{\rm OT}_{\phi_2,\eps}\bigr]
-{\rm OT}_{\phi_2,\eps}\bigr|
\lesssim
\frac{{\rm Var}_{P\otimes Q}[\rho_\eps]^{1/2}}{\sqrt n}
\lesssim
\frac{\eps^{-\frac{d}{2d+4}}}{\sqrt n},
\]
and then adding the approximation error once more gives
\[
\bigl|\EE\bigl[\widehat{\rm OT}_{\phi_2,\eps}\bigr]-{\rm OT}\bigr|
\leq
\bigl|\EE\bigl[\widehat{\rm OT}_{\phi_2,\eps}\bigr]
-{\rm OT}_{\phi_2,\eps}\bigr|
+{\rm OT}_{\phi_2,\eps}-{\rm OT}
\lesssim
\frac{\eps^{-\frac{d}{2d+4}}}{\sqrt n}
+\eps^{\frac{2}{d+2}}.
\]
Finally, balancing the two powers of $\eps$ gives
$\eps\asymp n^{-(d+2)/(d+4)}$, and now substitution yields the second claim.
\end{proof}

 \section{Proofs of \Cref{Section-compl-fast}}\label{sect-proof-pot}

Let \cref{assumption:lipschitz-cost,assumption:divergence-strong,assumption-P-strong} hold. We start by studying the (random) convex function $$\Gamma(t) = -\widehat{\Phi}_{\phi,\eps}\bigl( (1-t)({\widehat{f}},{\widehat{g}} )  + t\, ({f}_\eps,g_\eps)\bigr), \quad t\in [0,1]. $$ 

\begin{lemma}\label{lemma:taylor}
The function $\Gamma$ is convex, $\mathcal{C}^{1,1}$ and has Taylor expansion, for $t\in [0,1]$, 
$$  \Gamma(t) =\Gamma(0)+  \int_{0}^t \int_{0}^s   \frac{1}{\eps} \int  \Bigl(\bigl( \widehat{f} - f_\eps \bigr) \oplus \bigl( \widehat{g} - g_\eps \bigr) \Bigr)^2\psi''(\gamma_r)  d(\widehat{P}\otimes \widehat{Q}) dr ds, $$
where 
$$ \gamma_t =\frac{\bigl((1-t){\widehat{f}}+ t f_\eps\bigr)\oplus \bigl((1-t){\widehat{g}}+ t g_\eps\bigr) - c}{\eps} .$$  
\end{lemma}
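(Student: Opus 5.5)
The plan is to exploit that $\widehat P\otimes\widehat Q$ is a finite sum of Dirac masses, which reduces everything to one-dimensional calculus for finitely many functions of $t$. Write $\Delta:=(f_\eps-\widehat f)\oplus(g_\eps-\widehat g)$ and $\Delta_{ij}:=\Delta(X_i,Y_j)$. Then $\gamma_t(X_i,Y_j)=\gamma_0(X_i,Y_j)+t\Delta_{ij}/\eps$ is affine in $t$. Hence
$$\Gamma(t)=\frac1{n^2}\sum_{i,j}\Bigl\{-\bigl(\widehat f(X_i)+\widehat g(Y_j)+t\Delta_{ij}\bigr)+\eps\,\psi\bigl(\gamma_t(X_i,Y_j)\bigr)\Bigr\}.$$

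\emph{Step 1: convexity and regularity.} Each summand is the sum of an affine function of $t$ and $\eps\psi$ composed with an affine map, so it is convex because $\psi$ is convex. By \cref{assumption:divergence-strong}, $\psi\in\mathcal C^{1,1}_{loc}$. Since $t$ ranges over $[0,1]$ and all quantities are bounded, each $\gamma_t(X_i,Y_j)$ stays in a compact interval. On that interval $\psi'$ is Lipschitz, so $\Gamma\in\mathcal C^{1,1}([0,1])$ with
$$\Gamma'(t)=\frac1{n^2}\sum_{i,j}\Delta_{ij}\bigl(\psi'(\gamma_t(X_i,Y_j))-1\bigr).$$

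\emph{Step 2: vanishing first derivative at $0$.} At $t=0$, use the empirical first-order condition \eqref{eq:FOC.as} for $(\widehat f,\widehat g)$, which says $\frac1n\sum_j\psi'(\gamma_0(X_i,Y_j))=1$ for each $i$, and symmetrically in $j$. Splitting $\Delta_{ij}=(f_\eps-\widehat f)(X_i)+(g_\eps-\widehat g)(Y_j)$ and summing first over $j$ in the first part and over $i$ in the second gives $\Gamma'(0)=0$. Equivalently, $t=0$ is a minimizer of the convex function $\Gamma$ by optimality of $(\widehat f,\widehat g)$.

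\emph{Step 3: second-order formula.} Since $\Gamma'$ is Lipschitz, it is absolutely continuous, so $\Gamma'(s)=\Gamma'(0)+\int_0^s\Gamma''(r)\,dr$. Integrating once more gives $\Gamma(t)=\Gamma(0)+\int_0^t\int_0^s\Gamma''(r)\,dr\,ds$, so it remains to identify $\Gamma''(r)$ for a.e.\ $r$ term by term.

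For a pair with $\Delta_{ij}=0$, the term is constant in $r$ and contributes zero, matching the integrand $\Delta_{ij}^2\psi''$.

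For $\Delta_{ij}\neq0$, the map $r\mapsto\gamma_r(X_i,Y_j)$ is an affine bijection onto an interval. The locally Lipschitz function $\psi'$ is differentiable a.e., and its derivative agrees a.e.\ with the fixed monotone representative $\psi''$. Because affine bijections preserve Lebesgue-null sets, the chain rule gives
$$\frac{d}{dr}\psi'(\gamma_r(X_i,Y_j))=\frac{\Delta_{ij}}{\eps}\,\psi''(\gamma_r(X_i,Y_j))\quad\text{for a.e. } r.$$
Summing over $(i,j)$ yields
$$\Gamma''(r)=\frac1\eps\int\Delta^2\,\psi''(\gamma_r)\,d(\widehat P\otimes\widehat Q)\quad\text{a.e.}$$
Since $\Delta^2=\bigl((\widehat f-f_\eps)\oplus(\widehat g-g_\eps)\bigr)^2$, this is exactly the stated expansion.

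The only delicate point is Step 3: the representative $\psi''$ is merely a.e.-defined, so one must check that composing it with $\gamma_r$ still produces the a.e.\ derivative. This is handled by the nondegeneracy of the affine change of variables when $\Delta_{ij}\neq0$, and by the trivial case when $\Delta_{ij}=0$.
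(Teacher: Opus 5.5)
Your proposal is correct and follows essentially the same route as the paper. The paper also writes $\Gamma$ as an average of the one-dimensional $\mathcal{C}^{1,1}$ functions $h_{i,j}$, uses the empirical first-order condition to get $\Gamma'(0)=0$, and applies the fundamental theorem of calculus twice with the a.e.\ second derivative $\frac{1}{\eps}\psi''(\gamma_r)\Delta_{ij}^2$. Your handling of the a.e.-defined representative $\psi''$ is slightly more careful than the paper's bare appeal to Rademacher's theorem: you use the nondegenerate affine change of variables when $\Delta_{ij}\neq 0$ and treat $\Delta_{ij}=0$ as trivial.
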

\begin{proof} 
Put
\begin{multline*}
    h_{i,j}(t)= -(1-t)\bigl({\widehat{f}}(X_i)+ {\widehat{g}}(Y_j)\bigr)  - t \bigl({f}_\eps (X_i )+ {g}_\eps (Y_j )\bigr)  \\+\eps \cdot \psi\biggl(\frac{ (1-t)\bigl({\widehat{f}}(X_i)+ {\widehat{g}}(Y_j)\bigr)  + t\, \bigl({f}_\eps (X_i )+ {g}_\eps (Y_j )\bigr)-c(X_i,Y_j)}{\eps}\biggr),
\end{multline*}
which is a $\mathcal{C}^{1,1}$ function by~\Cref{assumption:divergence-strong}.
It has first derivative 
$$ h_{i,j}'(t) =\Bigl( {f}_\eps (X_i )+ {g}_\eps (Y_j )- \bigl({\widehat{f}}(X_i)+ {\widehat{g}}(Y_j)\bigr) \Bigr) \bigl( \psi'\bigl(\gamma_t(X_i,Y_j)\bigr) - 1 \bigr).$$
The second derivative exists almost everywhere
by Rademacher's theorem. Indeed,
$$  h_{i,j}''(t) =\frac{1}{\eps}\psi'' \bigl(\gamma_t(X_i,Y_j)\bigr) \Bigl( {f}_\eps (X_i )+ {g}_\eps (Y_j )- \bigl({\widehat{f}}(X_i)+ {\widehat{g}}(Y_j)\bigr) \Bigr)^2$$
for Lebesgue-a.e.~$t\in \R$.  Applying the fundamental theorem of calculus twice yields 
\begin{align}
 h_{i,j}(t) =h_{i,j}(0)+  \int_{0}^t h_{i,j}'(s) ds =h_{i,j}(0)+ h_{i,j}'(0) t   +\int_{0}^t  \int_{0}^s h_{i,j}''(r)\,  dr\,ds . 
\end{align}
Hence, almost surely,
$$ \frac{1}{n^2} \sum_{i,j} h_{i,j}(t) = \frac{1}{n^2} \sum_{i,j} h_{i,j}(0)+ \frac{t}{n^2} \sum_{i,j}h_{i,j}'(0)   +\int_{0}^t  \int_{0}^s \frac{1}{n^2} \sum_{i,j} h_{i,j}''(r)  \,dr\,ds.$$
Noting that $\frac{1}{n^2} \sum_{i,j}h_{i,j}'(0)= 0$,   and  $\frac{1}{n^2} \sum_{i,j} h_{i,j}(0)= \Gamma(0)$ the result follows.  
\end{proof}
In view of \Cref{lemma:taylor}, we want to find a lower bound for $\psi''(\gamma_r)$. 
This will be achieved by showing that the empirical bilinear form 
$$   \widehat{B}_r( u\oplus v,\tilde u\oplus \tilde v ) := \int \psi''(\gamma_r) \bigl(u\oplus v\bigr)  \bigl(\tilde u\oplus \tilde v\bigr)   d(\widehat{P}\otimes \widehat{Q}) $$
is coercive in the finite-dimensional random Hilbert space
$$\widehat{L}_\oplus^2:={\rm sp}\bigl\{ u\oplus v: (u,v)\in L^2(\widehat{P})\times L^2(\widehat{Q})\bigr\}
\subset L^2(\widehat{P}\otimes \widehat{Q}) $$
endowed with the induced inner product. First, we will show in \cref{se:component-wise-coercivity} the coercivity with respect to the first component, i.e., 
$$   \widehat{B}_r( u\oplus v,u\oplus v ) \geq \widehat{\alpha} {\rm Var}_{\widehat{P}}[u],  $$
for a sequence of random variables $\widehat{\alpha}$ (indexed by~$n$). Then, we will use this component-wise coercivity in \cref{se:coercivity-joint} to show that 
$$   \widehat{B}_r( u\oplus v,u\oplus v ) \geq \widehat{\beta} \|u\oplus v\|_{L^2(\widehat{P}\otimes \widehat{Q})}^2,  $$
for a different sequence of random variables $\widehat{\beta}$.
Finally, in \cref{se:deterministic-bounds}, we will obtain a suitable control for~$\widehat{\beta}$. In the remainder of the paper we abbreviate 
$$t_\eps:= \min\left(\frac{\eps \delta_\psi}{8\bigl(5\|c\|_\infty+\eps\phi'(1)\bigr) }, 1\right).$$
  
  \subsection{Component-wise coercivity}\label{se:component-wise-coercivity}

Define the event
     $$\mathcal{E}_n= \biggl\{  \text{there exists $\hat{T}$ s.t.~$\hat{T}_\# P =\widehat{P}$ and $  \|\hat{T}-{\rm I}\|_{L^\infty(P)} \leq \frac{\delta_\psi\eps}{32 L}$}\biggr\}\subset \boldsymbol{\Omega} . $$ 
The main result of this subsection is the following lower bound.     
\begin{proposition}\label{proposition:component-coercivity}
    For every $0\leq r\leq t_\eps$ and all $u\oplus v \in \widehat L_\oplus^2$,  
    $$  \widehat{B}_r( u\oplus v,u\oplus v )\geq  \widehat{\alpha} \, {\rm Var}_{\widehat{P}}[u] {\bf 1}_{\mathcal{E}_n}, $$
    where 
    $$ \widehat{\alpha}:=\frac{ \lambda_P^2\psi''\bigl(t_\psi -\delta_\psi  \bigr)^2 \min_{j}  \widehat{Q}\Bigl(\mathbb{B}_{\frac{\delta_\psi \eps}{8L}}(Y_j)\Bigr)}{2\gamma(n,\eps) \Lambda_P^2\Bigl(\Bigl\lceil\frac{16L{\rm diam}(\Omega)}{\delta_\psi\eps} \Bigr\rceil \Bigr)^{d+2}} . $$
\end{proposition}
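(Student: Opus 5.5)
Our plan has three stages. First, we show that $\psi''(\gamma_r)$ is bounded below on a neighbourhood of the empirical ``support pairs''. Second, we turn this into a lower bound on $\widehat B_r(u\oplus v,u\oplus v)$ by a local graph-Laplacian energy in $u$. Third, we bound $\mathrm{Var}_{\widehat P}[u]$ by that energy, using the transport map $\hat T$ on $\mathcal E_n$ together with the nonlocal Poincaré inequality~\eqref{eq:bound-energy-intro}.

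\textbf{Step 1 (lower bound on the weights).} For $r\le t_\eps$, the bound of \cref{pr:ROTprelims}(vi) gives $\|\gamma_r-\gamma_0\|_\infty\le r\cdot 2(5\|c\|_\infty+\eps\phi'(1))/\eps\le\delta_\psi/4$. By the empirical first-order condition, for each $j$ we have $\frac1n\sum_i\psi'(\gamma_0(X_i,Y_j))=1$, so some $i_j$ satisfies $\gamma_0(X_{i_j},Y_j)\ge t_\psi$ (because $\psi'(t_\psi)=1$ and $\psi'$ is monotone). Since $\widehat f$ is $L$-Lipschitz and $c$ is $L$-Lipschitz, $\gamma_0(\cdot,Y_j)$ is $2L/\eps$-Lipschitz. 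Hence $\gamma_0(x,Y_j)\ge t_\psi-\delta_\psi/2$ whenever $\|x-X_{i_j}\|\le\delta_\psi\eps/(4L)$, and the same reasoning in the $y$-variable gives the analogous bound near $Y_j$. Because $\psi''$ is non-decreasing, $\psi''(\gamma_r)\ge\psi''(t_\psi-\delta_\psi)=:\kappa$ on these neighbourhoods. Symmetrically, for each $X_i$ we get a $Y_{j_i}$ and a ball of $Y$'s around it.

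\textbf{Step 2 (from $u\oplus v$ to differences of $u$).} For $X_i,X_\ell$ both within $\delta_\psi\eps/(8L)$ of some $X_{i_j}$, write $(u(X_i)-u(X_\ell))^2\le 2(u(X_i)+v(Y_k))^2+2(u(X_\ell)+v(Y_k))^2$ and average over $Y_k$ in a ball $\mathbb B_{\delta_\psi\eps/(8L)}(Y_j)$. This ball carries $\widehat Q$-mass at least $\min_j\widehat Q(\mathbb B(Y_j))$, and on it both pairs have weight $\ge\kappa$. Summing over all such pairs and controlling the overcounting by the row and column sums of the weights, which are at most $\gamma(n,\eps)$ by \eqref{eq:gamma}, we aim for
\[
\frac{\kappa^2\min_j\widehat Q(\mathbb B(Y_j))}{\gamma(n,\eps)}\,\frac1{n^2}\sum_{\|X_i-X_\ell\|\le\delta}(u(X_i)-u(X_\ell))^2\lesssim\widehat B_r(u\oplus v,u\oplus v),\qquad \delta\asymp\frac{\delta_\psi\eps}{16L}.
\]
This is essentially the statement labelled \Cref{Lemma:From two-to-one-variance}. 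The $\kappa^2/\gamma$ factor comes from a Cauchy--Schwarz step that weights each intermediate $Y_k$.

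\textbf{Step 3 (Poincaré).} On $\mathcal E_n$, set $h=u\circ\hat T\in L^2(P)$. Then $\mathrm{Var}_{\widehat P}[u]=\mathrm{Var}_P[h]=\frac12\iint(h(x)-h(z))^2dPdP$. Since $\|\hat T-I\|_\infty\le\delta_\psi\eps/(32L)$, any pair with $\|x-z\|\le\delta/2$ satisfies $\|\hat T x-\hat T z\|\le\delta$, so $\iint_{\|x-z\|\le\delta/2}(h(x)-h(z))^2\,dP\,dP\le\frac1{n^2}\sum_{\|X_i-X_\ell\|\le\delta}(u(X_i)-u(X_\ell))^2$. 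Using the density bounds $\lambda_P,\Lambda_P$ and the convexity of $\Omega$, the Poincaré bound~\eqref{eq:bound-energy-intro} controls the variance by the local energy. We would make this explicit by a chaining argument along segments in $\Omega$ split into $\lceil 16L\,\mathrm{diam}(\Omega)/(\delta_\psi\eps)\rceil$ steps, which yields the factor $(\Lambda_P/\lambda_P)^2\lceil\cdot\rceil^{d+2}$ appearing in $\widehat\alpha$.

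The main obstacle is Step 2. Not all pairs $(X_i,X_\ell)$ at distance $\le\delta$ share a common ``active'' partner $Y_j$ with a large weight, so we must show that every $X_i$ is close to some $X_{i_j}$. We plan to do this with the dual statement from Step 1: each $X_i$ itself has an active partner $Y_{j_i}$. The row-sum bound $\gamma(n,\eps)$ then limits how often a given pair is counted. Off $\mathcal E_n$ the claim is trivial since $\widehat B_r\ge0$.
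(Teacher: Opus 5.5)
Your plan follows the paper's proof. Steps 1--2 are \cref{Lemma:From two-to-one-variance}, and your fix for the obstacle, anchoring each $X_i$ at its own maximizing partner $Y_{j(i)}$, is exactly what the paper does. Step 3 is \cref{lemma:Theta-sigma} followed by \cref{Lemma:Bound-energy} with $\delta=\frac{\delta_\psi\eps}{16L}$, transferred back via $u\mapsto u\circ\hat T$, with a factor $2$ to spare.

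One point to tighten in Step 2. The factor $\psi''(t_\psi-\delta_\psi)^2/\gamma(n,\eps)$ in $\widehat\alpha$ comes from the exact weighted-variance identity $\min_{s\in\R}\sum_i a_{i,j}(u(X_i)-s)^2=\frac{1}{2\sum_k a_{k,j}}\sum_{i,\ell}a_{i,j}a_{\ell,j}\bigl(u(X_i)-u(X_\ell)\bigr)^2$, where $a_{i,j}=\psi''\bigl(\gamma_0(X_i,Y_j)-\frac{\delta_\psi}{4}\bigr)\le\psi''(\gamma_r(X_i,Y_j))$ and $\sum_k a_{k,j}\le n\gamma(n,\eps)$. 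By contrast, the unweighted bound $(a-b)^2\le 2a^2+2b^2$ averaged over $Y_k$, as you describe it, gives the different, $\gamma$-free constant $\frac14\psi''(t_\psi-\delta_\psi)\min_j\widehat Q\bigl(\mathbb B_{\frac{\delta_\psi\eps}{8L}}(Y_j)\bigr)$, which does not by itself yield the stated $\widehat\alpha$.
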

The rest of this subsection is devoted to the proof of \Cref{proposition:component-coercivity}. 
We start with an auxiliary lemma; it reduces the problem to bounding below  the first  nontrivial eigenvalue of the Laplacian of the random graph with weights $w_{i,\ell}= {\bf 1}_{\|X_i-X_\ell\| \leq \frac{\delta_\psi \eps}{8L}}$.

\begin{lemma}\label{Lemma:From two-to-one-variance}
Let
$$  \widehat{\Theta}_\eps(u,\tilde u) = \frac{1}{n^2}\sum_{\|X_i-X_\ell\| \leq \frac{\delta_\psi \eps}{8L}}     \bigl(u(X_i)- u(X_\ell) \bigr)\bigl(\tilde u (X_i)- \tilde u (X_\ell) \bigr) . $$
Then for every $0\leq r\leq t_\eps$ and $u\oplus v \in \widehat L_\oplus^2$,  
$$
         \widehat{B}_r( u\oplus v,u\oplus v )
     \geq  {\frac12}\Biggl(  \frac{ \min_{j}  \widehat{Q}\Bigl(\mathbb{B}_{\frac{\delta_\psi \eps}{8L}}(Y_j)\Bigr)}{\max_j\frac{1}{n}\sum_{k} \pi_{k,j}^{(1)}} \Biggr)\psi''(t_\psi -\delta_\psi)^2 \widehat{\Theta}_\eps(u, u) . 
$$

\end{lemma}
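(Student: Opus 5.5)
The plan is to compare $\widehat\Theta_\eps(u,u)$ with $\widehat B_r(u\oplus v,u\oplus v)$ through the elementary inequality
\[
(u(X_i)-u(X_\ell))^2\le 2(u(X_i)+v(Y_k))^2+2(u(X_\ell)+v(Y_k))^2 .
\]
It is applied with a carefully chosen ``bridge'' index $k$ for which both weights $w_{ik}:=\psi''(\gamma_r(X_i,Y_k))$ and $w_{\ell k}$ are bounded below by $\kappa:=\psi''(t_\psi-\delta_\psi)$. The argument has three ingredients: an upper bound on the weights, a lower bound on enough of them, and a counting step.

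\emph{Step 1 (weights are dominated by $\pi^{(1)}$).} By \cref{pr:ROTprelims}(vi), applied to both $(P,Q)$ and $(\widehat P,\widehat Q)$, we have $\|(\widehat f-f_\eps)\oplus(\widehat g-g_\eps)\|_\infty\le 2(5\|c\|_\infty+\eps\phi'(1))$. Hence for $r\le t_\eps$,
\[
|\gamma_r-\gamma_0|\le \frac{r}{\eps}\,2\bigl(5\|c\|_\infty+\eps\phi'(1)\bigr)\le \frac{\delta_\psi}{4}.
\]
Since $\psi''$ is non-decreasing, this gives $w_{ij}\le\pi^{(1)}_{i,j}$.

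\emph{Step 2 (many large weights).} Fix $i$. The empirical first-order condition \eqref{eq:FOC.pw} states $\frac1n\sum_j\psi'(\gamma_0(X_i,Y_j))=1$. So some $j=j(i)$ has $\psi'(\gamma_0(X_i,Y_j))\ge1=\psi'(t_\psi)$, hence $\gamma_0(X_i,Y_j)\ge t_\psi$ by monotonicity, and then $\gamma_r(X_i,Y_j)\ge t_\psi-\delta_\psi/4$. By \cref{pr:ROTprelims}(vi) and \cref{assumption:lipschitz-cost}, the functions $\widehat f,f_\eps$ (and $\widehat g,g_\eps$) are $L$-Lipschitz. Therefore $\gamma_r$ is $2L/\eps$-Lipschitz in each variable. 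Now take $\|X_\ell-X_i\|\le\frac{\delta_\psi\eps}{8L}$ and $Y_k\in\mathbb B_{\frac{\delta_\psi\eps}{8L}}(Y_{j(i)})$. Then both $\gamma_r(X_i,Y_k)$ and $\gamma_r(X_\ell,Y_k)$ are at least $t_\psi-\frac34\delta_\psi>t_\psi-\delta_\psi$, so $w_{ik},w_{\ell k}\ge\kappa$. The set $K_i$ of such $k$ has cardinality at least $n m$, where $m:=\min_j\widehat Q(\mathbb B_{\frac{\delta_\psi\eps}{8L}}(Y_j))$.

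\emph{Step 3 (counting).} For each graph edge $(i,\ell)$, average the elementary inequality over $k\in K_i$. In the resulting sums, replace the indicators $\mathbf 1_{\|X_i-X_\ell\|\le\cdot}\mathbf 1_{k\in K_i}$ by $w_{\ell k}/\kappa$ (resp.\ $w_{ik}/\kappa$). This yields
\[
\widehat\Theta_\eps(u,u)\le \frac{2}{n^3 m\kappa^2}\sum_{i,k}w_{ik}(u(X_i)+v(Y_k))^2\sum_{\ell}w_{\ell k}+(\text{symmetric term}).
\]
The inner sum is controlled via Step 1: $\sum_\ell w_{\ell k}\le n\max_k\frac1n\sum_\ell\pi^{(1)}_{\ell,k}$. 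Altogether this gives $\widehat\Theta_\eps\lesssim \frac{\max_j\frac1n\sum_k\pi^{(1)}_{k,j}}{m\kappa^2}\,\widehat B_r$, which is the claimed inequality up to the numerical factor.

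The main obstacle is the bookkeeping in Step 3, in particular obtaining exactly the constant $\tfrac12$ rather than $\tfrac14$. A naive use of $(a-b)^2\le2a^2+2b^2$, followed by treating the $i$- and $\ell$-terms separately, loses a factor $2$. I would first exploit the symmetry of the edge set in $(i,\ell)$, so that both terms reduce to the single term in which the indicator is absorbed by $w_{\ell k}/\kappa$. I would also make sure that each $w_{ik}(u_i+v_k)^2$ is counted only once after swapping summation order, checking carefully which of $w_{ik},w_{\ell k}$ is used to absorb which indicator.
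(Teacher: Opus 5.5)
Your Steps~1--2 are correct and essentially coincide with the paper's localization: a bridge index $k$ with $Y_k$ near $Y_{j(i)}$, Lipschitz continuity in each variable, $w_{ik}\le\pi^{(1)}_{i,k}$, and $|K_i|\ge nm$. The gap is in Step~3. As designed, it only gives $\widehat B_r(u\oplus v,u\oplus v)\ge\frac14\,\frac{m}{M}\,\kappa^2\,\widehat\Theta_\eps(u,u)$, where $M:=\max_j\frac1n\sum_k\pi^{(1)}_{k,j}$. The symmetrization you propose cannot restore the missing factor~$2$, and that factor is not cosmetic: it is carried into $\widehat\alpha$ in \cref{proposition:component-coercivity} and hence into the constants of \cref{theorem:potentials}.

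Relabeling $i\leftrightarrow\ell$ in the second term replaces $\mathbf{1}_{K_i}(k)$ by $\mathbf{1}_{K_\ell}(k)$. It is therefore a genuinely different term that must be paid for separately, and each of the two terms costs a full $\frac{2M}{m\kappa^2}\widehat B_r$. The real source of the loss is that $(a-b)^2\le 2a^2+2b^2$ discards the cross term $-2\bigl(u(X_i)+v(Y_k)\bigr)\bigl(u(X_\ell)+v(Y_k)\bigr)$ pointwise. Summed against your indicator weights $\mathbf{1}_E(i,\ell)\mathbf{1}_{K_i}(k)$, which are not of product form in $(i,\ell)$, that cross term has no sign. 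Moreover, every pointwise bound $(a-b)^2\le\alpha a^2+\beta b^2$ requires $\alpha+\beta\ge4$, so absorbing the two pieces separately can never beat $\frac14$.

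The remedy is to reverse the order of your last two operations:
\begin{itemize}
\item keep $\bigl(u(X_i)-u(X_\ell)\bigr)^2$ intact and insert $1\le\frac{1}{nm}\sum_k\mathbf{1}_{K_i}(k)$;
\item dominate $\mathbf{1}_E(i,\ell)\mathbf{1}_{K_i}(k)\le w_{ik}w_{\ell k}/\kappa^2$;
\item make the sum over $k$ outermost, and for each fixed $k$ use the exact identity
\[
\sum_{i,\ell}w_{ik}w_{\ell k}\bigl(u(X_i)-u(X_\ell)\bigr)^2
=2W_k\sum_i w_{ik}\bigl(u(X_i)+v(Y_k)\bigr)^2-2\Bigl(\sum_i w_{ik}\bigl(u(X_i)+v(Y_k)\bigr)\Bigr)^2,
\qquad W_k:=\sum_i w_{ik}\le nM,
\]
in which the cross terms have assembled into a perfect square.
\end{itemize}
Dropping that square yields
\[
\widehat\Theta_\eps(u,u)\le\frac{1}{n^3m\kappa^2}\sum_k 2W_k\sum_i w_{ik}\bigl(u(X_i)+v(Y_k)\bigr)^2\le\frac{2M}{m\kappa^2}\,\widehat B_r(u\oplus v,u\oplus v),
\]
which is exactly the constant $\frac12$.

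This identity is the paper's first step, performed before localizing. For each $j$ with $\sum_k a_{k,j}>0$, it uses
\[
\min_{s}\sum_i a_{i,j}\bigl(u(X_i)-s\bigr)^2=\frac{1}{2\sum_k a_{k,j}}\sum_{i,\ell}a_{i,j}a_{\ell,j}\bigl(u(X_i)-u(X_\ell)\bigr)^2,
\]
applied with $s=-v(Y_j)$ and weights $a_{i,j}\le\psi''(\gamma_r(X_i,Y_j))$ in place of your $w_{ij}$.
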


\begin{proof}
\emph{Step~1: Reduction to pairwise $X$-differences.}
Fix $0\leq r\leq t_\eps$ and $u\oplus v \in \widehat L_\oplus^2$. \Cref{assumption:divergence-strong} 
states that $\psi''$ is monotone, so \Cref{pr:ROTprelims} implies that
\[
\psi''(\gamma_r)\geq \psi''\biggl(\frac{\widehat{f}\oplus \widehat{g}-c }{\eps}
-\frac{\delta_\psi}{4}\biggr).
\]
Integrating against $\widehat{P}\otimes \widehat{Q}$, we obtain
\[
\int \psi''(\gamma_r) \bigl(u\oplus v\bigr)^2  d(\widehat{P}\otimes \widehat{Q})
\geq
\int \psi''\biggl(\frac{\widehat{f}\oplus \widehat{g}-c}{\eps}
-\frac{\delta_\psi}{4} \biggr)
\bigl(u\oplus v\bigr)^2  d(\widehat{P}\otimes \widehat{Q}).
\]
Define 
\[
a_{i,j}:=\psi''\Bigl(\frac{\widehat f(X_i)+\widehat g(Y_j)-c(X_i,Y_j)}{\eps}
-\frac{\delta_\psi}{4}\Bigr)\leq \pi_{i,j}^{(1)}.
\]
Hence, with the convention that
$\frac{\sum_{k} a_{k,j} u(X_k)}{\sum_{k} a_{k,j}}=0$ when $\sum_k a_{k,j}=0$,
it follows that 
\begin{align*}
    \int \psi''(\gamma_r) \bigl(u\oplus v\bigr)^2  d(\widehat{P}\otimes \widehat{Q})
    &\geq \frac{1}{n^2} \sum_{j} \min_{s\in \R}\sum_i
    a_{i,j}\bigl(u(X_i)- s\bigr)^2  \\
    &= \frac{1}{ n^2} \sum_{j,i}
    a_{i,j} \biggl(u(X_i)-
    \frac{\sum_{k} a_{k,j} u(X_k)}{\sum_{k} a_{k,j} } \biggr)^2  \\
    &=
    \frac{1}{2 n^2}
    \sum_{\substack{j:\,\sum_k a_{k,j}>0}}\sum_{i,\ell}
    \frac{a_{i,j} a_{\ell, j}}{\sum_k a_{k, j}}
    \bigl(u(X_i) - u(X_\ell)\bigr)^2.
\end{align*}
As a consequence, since $a_{k,j}\leq \pi_{k,j}^{(1)}$,
\begin{align}\label{Bipartite}
    \int \psi''(\gamma_r) \bigl(u\oplus v\bigr)^2  d(\widehat{P}\otimes \widehat{Q})
    &\geq
    \frac{1}{2 n^2}\sum_{j,i,\ell}
    \frac{a_{i,j} a_{\ell,j}}{\sum_{k} \pi_{k,j}^{(1)}}
    \bigl(u(X_i)- u(X_\ell) \bigr)^2 .
\end{align}

\emph{Step~2: Localization to the weighted graph.}
For each $i\in \{1, \dots, n\}$, fix
\[
j(i)\in \argmax_{j}\widehat{f}(X_i)+ \widehat{g}(Y_j)-c(X_i,Y_j)
\]
and define 
\[
m_i= \max_{j}\widehat{f}(X_i)+ \widehat{g}(Y_j)-c(X_i,Y_j).
\]
We use the fact that $\psi'$ is monotone to get 
\[
1= \frac{1}{n} \sum_{j=1}^n
\psi'\biggl(\frac{\widehat{f}(X_i)+ \widehat{g}(Y_j)-c(X_i,Y_j) }{\eps}\biggr)
\leq \psi'\biggl(\frac{m_i }{\eps}\biggr),
\]
so that $\frac{m_i }{\eps} \geq t_\psi$. Next, since
$\widehat{f}\oplus \widehat{g}-c$ is $2L$-Lipschitz in each variable,
we obtain
\begin{align*}
a_{\ell, j}
&\geqslant 
\psi'' \biggl(\frac{\widehat f(X_i) + \widehat g(Y_j) - c(X_i, Y_j)
- 2L\|X_i - X_\ell\|}{\eps}- \frac{\delta_\psi}{4}  \biggr) \\
&\geqslant
\psi''\biggl(\frac{m_i - 2L\|X_i - X_\ell\|
- 2L\|Y_j - Y_{j(i)}\|}{\eps} - \frac{\delta_\psi}{4}  \biggr).
\end{align*}
Similarly, we obtain
\[
a_{i, j} \geqslant
\psi''\biggl(\frac{m_i - 2L\|X_i - X_\ell\|
- 2L\|Y_j - Y_{j(i)}\|}{\eps}- \frac{\delta_\psi}{4}  \biggr).
\]
Hence, using also $\frac{m_i}{\eps} \geq t_\psi$,
\begin{align*}
a_{i,j}  a_{\ell,j}
&\geq
\Bigl(\psi''\biggl(\frac{m_i-2L\bigl(\|X_i-X_\ell\|
+ \|Y_{j(i)}-Y_j\|\bigr)}{\eps} - \frac{\delta_\psi}{4} \biggr) \Bigr)^2  \\
&\geq
\psi''\bigl(t_\psi -\delta_\psi \bigr)^2
{\bf 1}_{ \|X_i-X_\ell\| \leq \frac{\delta_\psi\eps}{8L}}
{\bf 1}_{ \|Y_{j(i)}-Y_j\| \leq \frac{\delta_\psi\eps}{8L}}.
\end{align*}
Thus~\eqref{Bipartite} yields
\begin{align*}
\int &\psi''(\gamma_r) \bigl(u\oplus v\bigr)^2  d(\widehat{P}\otimes \widehat{Q})\\
&\geq
{\frac{1}{2}}\frac{\psi''(t_\psi -\delta_\psi )^2}{n^2}
\sum_{j,i,\ell}
\bigl(u(X_i)- u(X_\ell) \bigr)^2
\frac{
{\bf 1}_{ \|X_i-X_\ell\| \leq \frac{\delta_\psi \eps}{8L}}
{\bf 1}_{ \|Y_{j(i)}-Y_j\| \leq \frac{\delta_\psi \eps}{8L}}
}{\sum_{k} \pi_{k,j}^{(1)}}\\
&=
{\frac{1}{2}}\frac{\psi''(t_\psi -\delta_\psi)^2}{n^2}
\sum_{i,\ell}
\bigl(u(X_i)- u(X_\ell) \bigr)^2
{\bf 1}_{ \|X_i-X_\ell\| \leq \frac{\delta_\psi \eps}{8L}}
\sum_{j}
\frac{
{\bf 1}_{ \|Y_{j(i)}-Y_j\| \leq \frac{\delta_\psi \eps}{8L}}
}{\sum_{k} \pi_{k,j}^{(1)}}\\
&\geq
{\frac{1}{2}}
\Biggl(
\frac{
\min_{j} \widehat{Q}\Bigl(\mathbb{B}_{\frac{\delta_\psi \eps}{8L}}(Y_j)\Bigr)
}{
\max_j\frac{1}{n}\sum_{k} \pi_{k,j}^{(1)}
}
\Biggr)
\frac{\psi''(t_\psi -\delta_\psi)^2}{n^2}
\sum_{i,\ell}
\bigl(u(X_i)- u(X_\ell) \bigr)^2
{\bf 1}_{ \|X_i-X_\ell\| \leq \frac{\delta_\psi \eps}{8L}}\\
&=
{\frac{1}{2}}
\Biggl(
\frac{
\min_{j} \widehat{Q}\Bigl(\mathbb{B}_{\frac{\delta_\psi \eps}{8L}}(Y_j)\Bigr)
}{
\max_j\frac{1}{n}\sum_{k} \pi_{k,j}^{(1)}
}
\Biggr)
\psi''(t_\psi -\delta_\psi)^2
\widehat{\Theta}_\eps(u,u).
\end{align*}
This proves the claim.
\end{proof}

Next, we compare $\widehat{\Theta}_\eps(u,u)$ with
${\rm Var}_{\widehat{P}}[u]$. Our proof adapts the method of
\cite{Trillos-manifold-laplace}, originally developed to transfer coercivity and
spectral information between random geometric graph Laplacians and their
continuum counterparts.

\begin{proposition}\label{pr:comparison-Theta-Var}For all $u\in L^2(\widehat{P})$, we have
    $$ \widehat{\Theta}_\eps(u,u)\geq  {\frac{\lambda_P^2}{\Lambda_P^2\Bigl(\Bigl\lceil\frac{16 L{\rm diam}(\Omega)}{\delta_\psi\eps} \Bigr\rceil \Bigr)^{d+2}}} {\rm Var}_{\widehat{P}}[u]{\bf 1}_{\mathcal{E}_n}.$$
\end{proposition}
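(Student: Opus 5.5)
On $\mathcal{E}_n$, the plan is to transfer $\widehat{\Theta}_\eps$ to a nonlocal Dirichlet energy of a function on $\Omega$ under $P$, and then prove a nonlocal Poincaré inequality on the convex set $\Omega$ by a chaining argument along segments. Off $\mathcal{E}_n$ the claim is trivial because $\widehat{\Theta}_\eps(u,u)\ge 0$, so assume $\mathcal{E}_n$ holds. Fix $\hat T$ with $\hat T_\#P=\widehat P$ and $\|\hat T-{\rm I}\|_{L^\infty(P)}\le \delta:=\frac{\delta_\psi\eps}{32L}$. Write $r:=\frac{\delta_\psi\eps}{8L}=4\delta$ and set $h:=u\circ\hat T\in L^2(P)$. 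Since $\hat T$ pushes $P$ to $\widehat P$ (the isometry $\Pi_T$ of the introduction), we have ${\rm Var}_P[h]={\rm Var}_{\widehat P}[u]$. Moreover, because $(\hat T(x),\hat T(z))$ under $P\otimes P$ has law $\widehat P\otimes\widehat P$,
\[
\widehat{\Theta}_\eps(u,u)=\int\!\!\int {\bf 1}_{\|\hat T(x)-\hat T(z)\|\le r}\,\bigl(h(x)-h(z)\bigr)^2\,dP(x)\,dP(z).
\]
If $\|x-z\|\le r/2$, then $\|\hat T(x)-\hat T(z)\|\le r/2+2\delta=r$. This gives the lower bound
\[
\widehat\Theta_\eps(u,u)\ge \int\!\!\int_{\|x-z\|\le s}(h(x)-h(z))^2\,dP\,dP,\qquad s:=r/2=\tfrac{\delta_\psi\eps}{16L}.
\]

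It then remains to show
\[
2{\rm Var}_P[h]=\int\!\!\int (h(x)-h(z))^2\,dP\,dP\le \frac{\Lambda_P^2}{\lambda_P^2}K^{d+2}\int\!\!\int_{\|x-z\|\le s}(h(x)-h(z))^2\,dP\,dP,\qquad K:=\Bigl\lceil \tfrac{{\rm diam}(\Omega)}{s}\Bigr\rceil .
\]
This $K$ is exactly the ceiling in the statement. The resulting bound is even better than claimed by a factor $2$. One could instead invoke \eqref{eq:bound-energy-intro} from \cite{GonzalezSanzNutzRiveros.26}, but I would prove it directly to get the explicit constant.

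For $x,z\in\Omega$, set $y_k:=(1-\tfrac kK)x+\tfrac kK z$ for $k=0,\dots,K$. These points lie in $\Omega$ by convexity, and $\|y_k-y_{k+1}\|\le {\rm diam}(\Omega)/K\le s$. Telescoping and Cauchy--Schwarz give $(h(x)-h(z))^2\le K\sum_{k<K}(h(y_k)-h(y_{k+1}))^2$. Next, bound $dP\otimes dP\le\Lambda_P^2\,dx\,dz$ on $\Omega\times\Omega$. For each $k$, change variables by the linear map $(x,z)\mapsto(y_k,y_{k+1})$. Its determinant is $\bigl|(1-\tfrac kK)\tfrac{k+1}K-\tfrac kK(1-\tfrac{k+1}K)\bigr|^d=K^{-d}$, so $dx\,dz=K^{d}\,dy_k\,dy_{k+1}$. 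The map sends $\Omega\times\Omega$ into $\{(a,b)\in\Omega\times\Omega:\|a-b\|\le s\}$. Using $da\,db\le\lambda_P^{-2}dP(a)dP(b)$ there, each of the $K$ summands is bounded by $\frac{\Lambda_P^2}{\lambda_P^2}K^{d}$ times the localized energy. Summing over $k$ and including the Cauchy--Schwarz factor $K$ yields the total factor $\frac{\Lambda_P^2}{\lambda_P^2}K^{d+2}$.

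The step I expect to require the most care is the first one. I must check that the coupling through $\hat T$ reproduces $\widehat\Theta_\eps$ exactly, including the diagonal terms $i=\ell$, which vanish anyway. I must also make sure the $L^\infty$ bound on $\hat T-{\rm I}$ holds $P\otimes P$-a.e., so that the indicator comparison is legitimate. The Jacobian computation and the fact that the chain stays inside $\Omega$ are routine once convexity is used.
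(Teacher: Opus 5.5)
Your proposal is correct and follows the paper's proof. The transfer through $\hat T$ with the indicator comparison at scale $\frac{\delta_\psi\eps}{16L}$ is \cref{lemma:Theta-sigma}, and ${\rm Var}_P[u\circ\hat T]={\rm Var}_{\widehat P}[u]$ is the isometry of \cref{lemma:Isometry}. Your segment-chaining argument with the linear change of variables (Jacobian $K^{-d}$) proves the nonlocal Poincar\'e inequality that the paper imports as \cref{Lemma:Bound-energy}, with the same constant $\frac{\Lambda_P^2}{\lambda_P^2}K^{d+2}$; the extra factor $2$ you keep also appears in the paper's intermediate display \eqref{eq:Bound-under-En} and is simply dropped in its last step.
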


The proof will be stated at the end of this subsection, after some auxiliary results. For a measurable map $T$ such that $T_\# P =\widehat{P}$, define the operators 
\begin{align*}
    \Pi_T: L^2(\widehat{P})& \to L^2(P) \\
   f &\mapsto  \sum_{i}  f(X_i) {\bf 1}_{T^{-1}(X_i)}=  f\circ T 
\end{align*}
and 
\begin{align*}
    \Pi_T^*: L^2(P)& \to L^2(\widehat{P}) \\
   g &\mapsto  n \cdot \biggl(\int_{T^{-1}(X_1)} g(x) dP(x), \dots, \int_{T^{-1}(X_n)} g(x) dP(x) \biggr).
\end{align*}
We then have the following result. 
\begin{lemma}
    \label{lemma:Isometry} The operator $\Pi_T^*$ is the adjoint of $\Pi_T$, i.e., for every $g\in L^2(P)$ and $f\in L^2(\widehat{P})$, 
    $$  \int (\Pi_T f)(x) g(x) dP(x)= \int (\Pi_T^* g)(x) f(x) d\widehat{P}(x) .$$
Moreover, $\Pi_T$ is an isometry. 
\end{lemma}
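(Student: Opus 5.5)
The plan is to verify the adjoint identity by direct computation, using that $T_\# P=\widehat P$ partitions $\Omega$ (up to $P$-null sets) into the level sets $T^{-1}(X_i)$. We then read off the isometry by taking $g=\Pi_T f$.

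One preliminary remark: if some sample points coincide, the sets $T^{-1}(X_i)$ for equal $X_i$ are the same set. We interpret the sums over distinct support points, equivalently treating $\widehat P$ as a measure on its support. Almost surely, under the standing setting of \cref{assumption-P-strong}, the $X_i$ are distinct since $P$ has a density. On this event $P(T^{-1}(X_i))=\widehat P(\{X_i\})=1/n$, and the sets $T^{-1}(X_i)$, $i=1,\dots,n$, are pairwise disjoint with union of full $P$-measure.

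\emph{Adjoint identity.} For $f\in L^2(\widehat P)$ and $g\in L^2(P)$, write
\[
\int (\Pi_T f) g\, dP=\sum_i f(X_i)\int_{T^{-1}(X_i)} g\, dP .
\]
The right-hand side of the claimed identity is
\[
\frac1n\sum_i f(X_i)\, (\Pi_T^*g)(X_i)=\frac1n\sum_i f(X_i)\, n\int_{T^{-1}(X_i)} g\,dP ,
\]
which is the same expression. Equivalently, one can use the change of variables $\int h\circ T\, dP=\int h\, d\widehat P$.

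\emph{Isometry.} By the pushforward property,
\[
\|\Pi_T f\|_{L^2(P)}^2=\int (f\circ T)^2\,dP=\int f^2\, d\widehat P=\|f\|^2_{L^2(\widehat P)} .
\]
Concretely, this equals $\sum_i f(X_i)^2 P(T^{-1}(X_i))=\frac1n\sum_i f(X_i)^2$. Linearity of $\Pi_T$ is clear, so $\Pi_T$ is a linear isometry, and $\Pi_T^*\Pi_T={\rm I}$ follows.

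There is no real obstacle here. The only care point is the bookkeeping of ties among the $X_i$, which is handled either by the almost-sure distinctness or by summing over distinct atoms.
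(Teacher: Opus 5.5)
Your proposal is correct and follows essentially the same argument as the paper: you expand $\int (\Pi_T f)\, g\, dP$ over the cells $T^{-1}(X_i)$ to read off $\Pi_T^*$, and you get the isometry from the pushforward identity $\int (f\circ T)^2\, dP = \int f^2\, d\widehat P$. Your remark on ties among the $X_i$ is a bookkeeping point the paper leaves implicit: with a repeated point, $\sum_i f(X_i)\mathbf{1}_{T^{-1}(X_i)}$ no longer equals $f\circ T$. You handle it correctly, since the $X_i$ are almost surely distinct when $P$ has a density.
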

\begin{proof}
Note
$$ \int (\Pi_T f)(x) g(x) dP(x)= \sum_{i} f(X_i)\int_{T^{-1}(X_i)} g(x) dP(x),$$
so that 
$$ \Pi_T^*(g)=  n\cdot \biggl(\int_{T^{-1}(X_1)} g(x) dP(x), \dots, \int_{T^{-1}(X_n)} g(x) dP(x) \biggr) $$
is the adjoint operator of $\Pi_T$. Furthermore, 
\begin{align*}
    \int \bigl((\Pi_T f)(x)\bigr)^2dP(x)&= \int \bigl(f(T(x))\bigr)^2dP(x)= \int \bigl(f(x)\bigr)^2d\widehat{P}(x),
\end{align*}
so that $\Pi_T$ is an isometry. 
\end{proof}
Next, we compare the coercivity of $\widehat{\Theta}_\eps$ with that of 
$$ {\Sigma}_{\eps,T}(u,v) = \int \int_{\mathbb{B}_{\frac{\delta_\psi\eps}{16 L}}(x)} \bigl( (\Pi_T u)(x)-(\Pi_T u)(z) \bigr)\bigl( (\Pi_T v)(x)-(\Pi_T v)(z) \bigr) dP(z) dP(x)  $$
under the event $\mathcal{E}_n$.
\begin{lemma}\label{lemma:Theta-sigma}
Assume that there exists ${T}$ such that ${T}_\# P =\widehat{P}$ and $  \|{T}-{\rm I}\|_{L^\infty(P)} \leq \frac{\delta_\psi\eps}{32 L}$. Then 
$$ {\Sigma}_{\eps,T}(u,u)\leq   \widehat{\Theta}_\eps(u,u). $$
\end{lemma}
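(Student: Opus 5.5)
The plan is to rewrite $\Sigma_{\eps,T}(u,u)$ as a sum over pairs of sample points and compare it term by term with $\widehat\Theta_\eps(u,u)$. Set $\delta:=\frac{\delta_\psi\eps}{8L}$ and $A_i:=T^{-1}(X_i)$; the sets $A_1,\dots,A_n$ partition $\Omega$ up to $P$-null sets. Since $T_\#P=\widehat P$, each $A_i$ has $P(A_i)=\frac1n$ when the sample points are distinct. If points coincide, $P(A_i)$ is the total empirical mass of the repeated atom, and the argument still goes through once we sum over indices with multiplicity. On $A_i\times A_\ell$ the integrand $\bigl((\Pi_Tu)(x)-(\Pi_Tu)(z)\bigr)^2$ is the constant $(u(X_i)-u(X_\ell))^2$. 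Hence
\[
\Sigma_{\eps,T}(u,u)=\sum_{i,\ell}\bigl(u(X_i)-u(X_\ell)\bigr)^2\,(P\otimes P)\Bigl(\bigl\{(x,z)\in A_i\times A_\ell:\ \|x-z\|<\tfrac{\delta}{2}\bigr\}\Bigr).
\]

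Next comes localization. For $P$-a.e.\ $x\in A_i$ and $z\in A_\ell$, the hypothesis $\|T-{\rm I}\|_{L^\infty(P)}\le\frac{\delta}{4}$ gives $\|x-X_i\|\le\frac{\delta}{4}$ and $\|z-X_\ell\|\le\frac{\delta}{4}$. If moreover $\|x-z\|<\frac{\delta}{2}$, the triangle inequality yields
\[
\|X_i-X_\ell\|<\tfrac{\delta}{2}+\tfrac{\delta}{4}+\tfrac{\delta}{4}=\delta .
\]
So the $(i,\ell)$ term can be nonzero only when $\|X_i-X_\ell\|\le\delta$, which is exactly the index set in $\widehat\Theta_\eps$. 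For such pairs we bound the $P\otimes P$-measure by $P(A_i)P(A_\ell)=\frac1{n^2}$. Summing then gives $\Sigma_{\eps,T}(u,u)\le\frac1{n^2}\sum_{\|X_i-X_\ell\|\le\delta}(u(X_i)-u(X_\ell))^2=\widehat\Theta_\eps(u,u)$.

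The only delicate point is bookkeeping. The essential-supremum bound on $T-{\rm I}$ holds only $P$-a.e., so we must discard a null set, which does not affect the integrals. We also need consistency between repeated sample points and the $\frac1{n^2}$ weighting. For repeated points, split each $A_i$ for a repeated atom into pieces of mass $\frac1n$, one per index, so the pairing with indices is exact. Apart from this, the argument is a direct computation with no analytic obstacle.
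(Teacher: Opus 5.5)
Your proposal is correct and is essentially the paper's argument. The paper uses $T_\#P=\widehat P$ to rewrite $\widehat{\Theta}_\eps(u,u)$ as $\int \mathbf{1}_{\|T(x)-T(z)\|\le \delta}\,(u(T(x))-u(T(z)))^2\,dP(x)\,dP(z)$, with your $\delta=\delta_\psi\eps/(8L)$, and then applies the $P\otimes P$-a.e.\ inclusion $\{\|x-z\|\le\delta/2\}\subset\{\|T(x)-T(z)\|\le\delta\}$, which comes from the same triangle inequality; this is the cell-free version of your decomposition over $T^{-1}(X_i)\times T^{-1}(X_\ell)$, and working directly with $u\circ T$ makes your bookkeeping for repeated sample points unnecessary.
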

\begin{proof}
Using the push-forward condition, 
\begin{align*}
    \widehat{\Theta}_\eps(u,u) &= \frac{1}{n^2} \sum_{\|X_i-X_\ell\| \leq \frac{\delta_\psi \eps}{8L}}     \bigl(u(X_i)- u(X_\ell) \bigr)^2\\
   &=  \int {\bf 1}_{\|T(x)-T(z)\| \leq \frac{\delta_\psi\eps}{8 L} }    \bigl(u(T(x))- u(T(z)) \bigr)^2 dP(x) dP(z)\\
   &\geq \int {\bf 1}_{\|x-z\| \leq \frac{\delta_\psi\eps}{16 L} }    \bigl(u(T(x))- u(T(z)) \bigr)^2 dP(x) dP(z)\\
   &=  {\Sigma}_{\eps,T}(u,u). \qedhere
\end{align*}

\end{proof}
The following technical
result, which allows us to control 
$\Sigma_{\eps, \hat T}(u, u)$ via the population variance
of $u \circ \hat T$,
is taken from~\cite[Lemma 3.3]{GonzalezSanzNutzRiveros.26}.

\begin{lemma}[\cite{GonzalezSanzNutzRiveros.26}]\label{Lemma:Bound-energy} Assume that $\Omega$ is convex.
      For every $h\in L^2(P)$ and $\delta>0$, 
       $$ \int \int \bigl(h(x)-h(z)\bigr)^2   d P(x) dP(z) \leq  \frac{\Lambda_P^2}{\lambda_P^2}\biggl\lceil\frac{{\rm diam}(\Omega)}{\delta } \biggr\rceil^{d+2} \int \int_{\mathbb{B}_\delta(x) } \bigl(h(x)-h(z)\bigr)^2  d P(x) dP(z).   $$
   \end{lemma}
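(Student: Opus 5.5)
Since $\Omega$ is convex, I plan a chaining (path) argument. First the density bounds reduce everything to Lebesgue measure: the left side is at most $\Lambda_P^2\iint_{\Omega\times\Omega}(h(x)-h(z))^2\,dx\,dz$, and the right side is at least $\lambda_P^2\iint_{\Omega\times\Omega,\ \|x-z\|<\delta}(h(x)-h(z))^2\,dx\,dz$. The constant $\Lambda_P^2/\lambda_P^2$ comes out directly. It then remains to prove the Lebesgue-measure inequality
\[
\iint_{\Omega\times\Omega}(h(x)-h(z))^2\,dx\,dz\le m^{d+2}\iint_{\|x-z\|<\delta}(h(x)-h(z))^2\,dx\,dz,\qquad m:=\Bigl\lceil\tfrac{{\rm diam}(\Omega)}{\delta}\Bigr\rceil.
\]

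For $x,z\in\Omega$, set $w_k=x+\tfrac{k}{m}(z-x)$ for $k=0,\dots,m$. All $w_k$ lie in $\Omega$ by convexity, and consecutive points satisfy $\|w_{k+1}-w_k\|\le{\rm diam}(\Omega)/m\le\delta$. Cauchy--Schwarz gives $(h(x)-h(z))^2\le m\sum_{k=0}^{m-1}(h(w_k)-h(w_{k+1}))^2$. Since $h$ is only in $L^2$, pointwise evaluation along a deterministic path is meaningless, so I will not use these $w_k$ directly.

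Instead I will randomize the intermediate points. Write $x=a_0$, $z=a_m$, and let $a_k$ range over the pieces of a slightly shrunk homothetic copy of $\Omega$. Concretely, I take $a_k=(1-\tfrac km)x+\tfrac km\,y_k$ with independent $y_k$ uniform on $\Omega$; then $a_k\in\Omega$ by convexity. I average the chained Cauchy--Schwarz inequality over these auxiliary points, arranging the construction so that consecutive $a_k,a_{k+1}$ stay within distance $\delta$ (possibly after slightly adjusting $m$). Each term $\mathbb{E}(h(a_k)-h(a_{k+1}))^2$ is then an integral of $(h(u)-h(v))^2$ against the joint law of $(a_k,a_{k+1})$. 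This law is supported on $\{\|u-v\|<\delta\}$, and its density with respect to $du\,dv$ is bounded by a Jacobian factor. The Jacobian of the affine map $y\mapsto(1-t)x+ty$ is $t^d$, so the inverse densities are controlled by $(m/k)^d$-type factors. After summing over $k$ and including the $m$ from Cauchy--Schwarz, this should produce the $m^{d+2}$ power: one factor $m$ from Cauchy--Schwarz, one from the $m$ summands, and $m^d$ from the worst Jacobian.

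The main obstacle is this bookkeeping. I need a random-path construction whose consecutive-step joint densities are uniformly bounded by about $m^d/|\Omega|^2$ while consecutive points remain $\delta$-close. If the affine interpolation toward uniform points does not give closeness directly, my first fallback is to interpolate both endpoints simultaneously: $a_k$ is uniform on $\Omega\cap$ a ball of radius $\sim\delta$ around $w_k(x,z)$, and I use convexity to lower-bound the volume of that intersection by $c(\delta/{\rm diam})^d|\Omega|$. The cost is that the constant may degrade. Since the lemma is quoted from \cite{GonzalezSanzNutzRiveros.26}, I would ultimately match their exact constant by following that construction.
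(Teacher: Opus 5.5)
Your reduction to Lebesgue measure is fine and accounts for the factor $\Lambda_P^2/\lambda_P^2$. The gap is that the Lebesgue-measure inequality itself is never established. The randomized chain you settle on does not keep consecutive points close, and your fallback cannot deliver the stated constant.

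With independent $y_k$ you have $a_{k+1}-a_k=\frac{k}{m}(y_{k+1}-y_k)+\frac{1}{m}(y_{k+1}-x)$. Its norm is typically of order $\frac{k}{m}\,{\rm diam}(\Omega)$, and for $k$ comparable to $m$ that is of order ${\rm diam}(\Omega)\gg\delta$. No adjustment of $m$ helps, because the factor $k/m$ does not shrink. So the terms $\mathbb{E}\bigl(h(a_k)-h(a_{k+1})\bigr)^2$ are not controlled by the local energy at all.

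The ball-based fallback needs radii and steps of order $\delta/3$, plus a convexity lower bound on the volume of $\Omega\cap\mathbb{B}$. This changes both $m$ and the prefactor, giving something like $C^d m^{d+2}$ with a worse $C$ rather than $\lceil{\rm diam}(\Omega)/\delta\rceil^{d+2}$. Deferring to the reference for the exact constant is not a proof.

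The missing observation is that the deterministic chain you discarded is rigorous once you integrate over both endpoints; no pointwise evaluation along a single path is ever needed.

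1. Fix any everywhere-defined representative of $h$. Then $(h(x)-h(z))^2\le m\sum_{k=0}^{m-1}(h(w_k)-h(w_{k+1}))^2$ holds at every $(x,z)\in\Omega^2$.
2. For each $k$, the map $A_k:(x,z)\mapsto(w_k,w_{k+1})$ is linear on $\mathbb{R}^{2d}$. Its coefficient matrix $\bigl(\begin{smallmatrix}1-k/m & k/m\\ 1-(k+1)/m & (k+1)/m\end{smallmatrix}\bigr)$ has determinant $1/m$ for every $k$, so $|\det A_k|=m^{-d}$.
3. $A_k$ maps $\Omega^2$ into $\Omega^2$ by convexity, and $\|w_{k+1}-w_k\|=\|z-x\|/m\le\delta$.
4. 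Changing variables gives $\iint_{\Omega^2}(h(w_k)-h(w_{k+1}))^2\,dx\,dz\le m^{d}\iint_{\Omega^2\cap\{\|u-v\|\le\delta\}}(h(u)-h(v))^2\,du\,dv$. The right-hand side no longer depends on the representative. The set $\{\|u-v\|=\delta\}$ is Lebesgue-null, so open versus closed balls is irrelevant.
5. Summing the $m$ terms and multiplying by the Cauchy--Schwarz factor $m$ gives exactly $m^{d+2}$.

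The paper does not reprove this lemma but imports it from the cited work. This argument produces exactly the stated constant. In your language, the only randomization needed is over $(x,z)$ itself, i.e. your scheme with $y_k\equiv z$ for all $k$.
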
 
\begin{proof}[Proof of \cref{pr:comparison-Theta-Var}]
Assuming that there exists a map ${T}$ such that ${T}_\# P =\widehat{P}$ and $  \|{T}-{\rm I}\|_{L^\infty(P)} \leq {\frac{\delta_\psi\eps}{32 L}}$, 
\Cref{lemma:Isometry,lemma:Theta-sigma,Lemma:Bound-energy}  yield
\begin{align}
    \begin{split}
        \label{eq:Bound-under-En}
        \widehat{\Theta}_\eps(u,u)\geq  {\Sigma}_{\eps,T}(u,u)&\geq  {\frac{2\lambda_P^2}{\Lambda_P^2\Bigl(\Bigl\lceil\frac{16 L{\rm diam}(\Omega)}{\delta_\psi\eps} \Bigr\rceil \Bigr)^{d+2}}} \min_{a\in \R}\|\Pi_T(u)-a\|_{L^2(P)}^2 \\
        &\geq {\frac{\lambda_P^2}{\Lambda_P^2\Bigl(\Bigl\lceil\frac{16 L{\rm diam}(\Omega)}{\delta_\psi\eps} \Bigr\rceil \Bigr)^{d+2}}} \min_{a\in \R}\|u-a\|_{L^2(\widehat{P})}^2,
    \end{split}
\end{align}
which concludes the proof.     
\end{proof}

We note that \cref{proposition:component-coercivity} follows from
\cref{Lemma:From two-to-one-variance,pr:comparison-Theta-Var} and the
definition of $\gamma(n,\eps)$ in \eqref{eq:gamma}.

\subsection{Coercivity of the random bilinear form}\label{se:coercivity-joint}
Next, we show the coercivity of $\widehat{B}_r$ under the event $\mathcal{E}_n$. The idea of the proof is to show that $\widehat{B}_r$ defines a nonnegative operator $ \widehat{\mathbb{M}}_r$ which admits an eigenvalue decomposition. Then, we bound the eigenvalues on the strength of \Cref{proposition:component-coercivity}. 
\begin{proposition}\label{proposition:coercivity}
For $0\leq r\leq t_\eps$, 
we have
    $$ \widehat{B}_r( u\oplus v,u\oplus v )  \geq \widehat{\beta} \| u\oplus v \|_{L^2(\widehat{P}\otimes\widehat{Q})}^2{\bf 1}_{\mathcal{E}_n},$$
    where
   $$  \widehat{\beta}= \min\biggl(\frac{ \widehat{\alpha} \psi''\bigl(t_\psi-\delta_\psi\bigr) \min_{i}  \widehat{P}(\mathbb{B}_{\frac{\delta_\psi \eps}{8L}} (X_i))}{ \psi''\bigl(t_\psi-\delta_\psi\bigr) \min_{i}  \widehat{P}(\mathbb{B}_{\frac{\delta_\psi \eps}{8L}} (X_i))+ 4\gamma(n,\eps)},  \frac{1}{2}\psi''\bigl(t_\psi-\delta_\psi\bigr) \min_{i}  \widehat{P}(\mathbb{B}_{\frac{\delta_\psi \eps}{8L}}(X_i)) \biggr).  $$
    As a consequence,
$$ \int  \Bigl(\bigl( \widehat{f} - f_\eps \bigr) \oplus \bigl( \widehat{g} - g_\eps \bigr) \Bigr)^2\psi''(\gamma_r)  d(\widehat{P}\otimes \widehat{Q}) \geq \frac{\widehat{\beta}}{2} \int  \Bigl(\bigl( \widehat{f} - f_\eps \bigr) \oplus \bigl( \widehat{g} - g_\eps \bigr) \Bigr)^2  d(\widehat{P}\otimes \widehat{Q}) {\bf 1}_{\mathcal{E}_n} . $$
\end{proposition}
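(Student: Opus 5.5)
The plan is to reduce joint coercivity to the component-wise bound of \cref{proposition:component-coercivity} together with a direct lower bound on the $v$-direction. I will work with the same minorizing weights $a_{i,j}$ as in Step~1 of the proof of \cref{Lemma:From two-to-one-variance}.

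\textbf{Step 1: orthogonal decomposition.} Fix $0\le r\le t_\eps$ and $u\oplus v\in\widehat L^2_\oplus$. Write $\bar u=\int u\,d\widehat P$, $\tilde u=u-\bar u$ and $w=v+\bar u$, so that $u\oplus v=\tilde u\oplus w$. Since $\tilde u$ is $\widehat P$-centered, the cross term vanishes and
$$\|u\oplus v\|^2_{L^2(\widehat P\otimes\widehat Q)}={\rm Var}_{\widehat P}[u]+\|w\|^2_{L^2(\widehat Q)}.$$
As in Step~1 of \cref{Lemma:From two-to-one-variance}, monotonicity of $\psi''$ and \cref{pr:ROTprelims} give $\psi''(\gamma_r)\ge a_{i,j}$. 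Hence $\widehat B_r(u\oplus v,u\oplus v)\ge n^{-2}\sum_{i,j}a_{i,j}(\tilde u(X_i)+w(Y_j))^2$.

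\textbf{Step 2: degree bounds.} I need a lower bound on the column degrees $\frac1n\sum_i a_{i,j}$ and an upper bound on the row degrees $\frac1n\sum_j a_{i,j}$.
\begin{itemize}
\item \emph{Upper bound.} Since $a_{i,j}\le\pi^{(1)}_{i,j}$, \eqref{eq:gamma} gives $\frac1n\sum_j a_{i,j}\le\gamma(n,\eps)$.
\item \emph{Lower bound.} For each $j$, pick $i(j)$ maximizing $\widehat f(X_i)+\widehat g(Y_j)-c(X_i,Y_j)$. The second empirical first-order condition and monotonicity of $\psi'$ give that this maximum is at least $\eps t_\psi$, exactly as for $m_i$ in Step~2 of \cref{Lemma:From two-to-one-variance}. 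The $2L$-Lipschitz property in $x$ then yields $a_{i,j}\ge\psi''(t_\psi-\delta_\psi)$ whenever $\|X_i-X_{i(j)}\|\le\delta_\psi\eps/(8L)$. Therefore
$$\tfrac1n\sum_i a_{i,j}\ \ge\ \kappa:=\psi''(t_\psi-\delta_\psi)\min_i\widehat P\bigl(\mathbb B_{\frac{\delta_\psi\eps}{8L}}(X_i)\bigr).$$
\end{itemize}

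\textbf{Step 3: combining the two bounds.} Use the pointwise inequality $(a+b)^2\ge\frac12 b^2-a^2$. Summing with the degree bounds from Step~2 gives
$$\widehat B_r\ \ge\ \tfrac{\kappa}{2}\|w\|^2_{L^2(\widehat Q)}-\gamma(n,\eps)\,{\rm Var}_{\widehat P}[u].$$
\cref{proposition:component-coercivity} gives $\widehat B_r\ge\widehat\alpha\,{\rm Var}_{\widehat P}[u]\mathbf 1_{\mathcal E_n}$. Take a convex combination with weight $\theta\in[0,1]$ on the component-wise bound:
$$\widehat B_r\ \ge\ \bigl(\theta\widehat\alpha-(1-\theta)\gamma\bigr){\rm Var}_{\widehat P}[u]\mathbf 1_{\mathcal E_n}+(1-\theta)\tfrac{\kappa}{2}\|w\|^2_{L^2(\widehat Q)}.$$
I choose $\theta$ so that the ${\rm Var}$ coefficient is comparable to $(1-\theta)\kappa/2$. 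For instance, $\theta\widehat\alpha-(1-\theta)\gamma=(1-\theta)\kappa/2$ gives $\theta=(\gamma+\kappa/2)/(\widehat\alpha+\gamma+\kappa/2)$. This produces a coefficient of order $\widehat\alpha\kappa/(\kappa+4\gamma)$, which is the first entry of the min defining $\widehat\beta$. The second entry $\tfrac12\kappa$ covers the regime where $\widehat\alpha$ is large, in which case one takes $\theta$ close to $1$ and is limited by the $w$-part.

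The main obstacle is purely bookkeeping. The constants $4\gamma$ and $\frac12$ must match the stated $\widehat\beta$ exactly, which may require replacing $(a+b)^2\ge\frac12b^2-a^2$ by $(a+b)^2\ge(1-\eta)b^2-(\eta^{-1}-1)a^2$ with a tuned $\eta$. Also, \cref{proposition:component-coercivity} carries the factor $\mathbf 1_{\mathcal E_n}$ while Step~2 does not, so the argument is run on $\mathcal E_n$ and off $\mathcal E_n$ the claim is trivial.

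\textbf{Step 4: the consequence.} Apply the bound to $u=\widehat f-f_\eps$ and $v=\widehat g-g_\eps$. The stated consequence carries an additional factor $\frac12$, which I expect is there to absorb the convention choice in the constants or the passage from a fixed $r$ to the range $r\le t_\eps$ used later with \cref{lemma:Taylor}. Since $\frac12\widehat\beta\le\widehat\beta$, it follows immediately from the main inequality.
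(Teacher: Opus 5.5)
Your ingredients are the right ones: the column-degree lower bound $\kappa$ from Step~2, the row-degree bound $\gamma=\gamma(n,\eps)$, and \cref{proposition:component-coercivity} are exactly what the paper's proof uses. But Step~3 does not deliver the stated $\widehat\beta$, and the shortfall is not just bookkeeping.

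With your $\theta$, both coefficients equal $s:=\widehat\alpha\kappa/(2\widehat\alpha+2\gamma+\kappa)$. This is always strictly smaller than $\kappa/2$, and it satisfies $s\ge\widehat\alpha\kappa/(\kappa+4\gamma)$ if and only if $\widehat\alpha\le\gamma$. Your account of the second entry of the min is backwards. As $\theta\to1$ the $w$-coefficient $(1-\theta)\kappa/2$ tends to $0$, so no choice of $\theta$ produces $\kappa/2$. In the paper, this entry comes from Case~1 of the eigenvalue analysis, which has no analogue in your scheme. Tuning $\eta$ does not rescue it either. For given $V={\rm Var}_{\widehat P}[u]$ and $W=\|w\|^2_{L^2(\widehat Q)}$, the most your two scalar inequalities can yield is $\max\bigl(\widehat\alpha V,(\sqrt{\kappa W}-\sqrt{\gamma V})_+^2\bigr)$. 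Take $\widehat\alpha=(\kappa+4\gamma)/2$ (where $\widehat\beta=\kappa/2$), $V=1$ and $W=4\gamma/\kappa+h$ with $h>0$ small: both terms are then strictly below $\widehat\beta(V+W)$.

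What is missing is the observation that this regime cannot occur. On $\mathcal E_n$, apply \cref{proposition:component-coercivity} with $v=0$ and $u$ centered and nonconstant. Since $\psi''(\gamma_r)\le\pi^{(1)}_{i,j}$ for $r\le t_\eps$ (monotonicity of $\psi''$, as in your Step~1), \eqref{eq:gamma} gives $\widehat\alpha V\le\widehat B_r(u\oplus0,u\oplus0)\le\gamma V$, so $\widehat\alpha\le\gamma$. Hence the first entry of the min is at most $\kappa/4$, so $\widehat\beta=\widehat\alpha\kappa/(\kappa+4\gamma)\le s$, and your argument proves the main inequality. If all $X_i$ coincide, then $V\equiv0$ and your Step~3 bound gives $\kappa/2\ge\widehat\beta$ directly.

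Independently of this, $1/s\le 2/\kappa+(\kappa+4\gamma)/(\widehat\alpha\kappa)\le 2/\widehat\beta$, so $s\ge\widehat\beta/2$ unconditionally. That is exactly the stated consequence, which is all the proof of \cref{theorem:potentials} uses. So the consequence follows from your argument as it stands, not merely as slack from the main inequality.

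With this repair, your route is a genuine alternative to the paper's. The paper diagonalizes $\widehat{\mathbb M}_r$, solves the eigenvector equation for $v_0$, and splits into cases on $\lambda$. You instead bound the quadratic form directly via Young's inequality, which avoids the spectral argument and, when $\widehat\alpha<\gamma$, even gives a slightly better constant. The paper's case split is what lets it produce the $\kappa/2$ entry without any a priori comparison between $\widehat\alpha$ and $\gamma$.
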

\begin{proof}

The bilinear form
$$   \widehat{B}_r( u\oplus v,\tilde u\oplus \tilde v ) = \int \psi''(\gamma_r) \bigl(u\oplus v\bigr)  \bigl(\tilde u\oplus \tilde v\bigr)   d(\widehat{P}\otimes \widehat{Q}) $$
is symmetric and nonnegative. Hence, there exists an operator
$\widehat{\mathbb{M}}_r:\widehat{L}_\oplus^2 \to \widehat{L}_\oplus^2 $
such that 
$$  \widehat{B}_r( u\oplus v,\tilde u\oplus \tilde v ) = \bigl\langle \widehat{\mathbb{M}}_r\bigl(u\oplus v\bigr), \tilde u\oplus \tilde v \bigr\rangle_{L^2(\widehat{P}\otimes \widehat{Q})},$$
for all $u\oplus  v, \tilde u\oplus \tilde v \in \widehat{L}_\oplus^2$. The operator $\widehat{\mathbb{M}}_r$ admits an eigenvalue decomposition, and all eigenvalues are nonnegative since $\widehat{B}_r$ is symmetric and nonnegative.  Let $\lambda\geq 0$ be a nontrivial eigenvalue with associated eigenvector $ u_0\oplus  v_0\neq 0$. Assume furthermore that  $ \mathcal{E}_n$ holds. We can assume that $u_0$ is centered with respect to $\widehat{P}$, as this does not change the value of  $ u_0\oplus  v_0 $. 

Fixing $u\in L^2(\widehat{P})$ with $\int u d\widehat{P}=0$, we have
\begin{align*}
    \lambda \bigl\langle u_0, u\bigr\rangle_{L^2(\widehat{P})}&=  \lambda \bigl\langle u_0\oplus v_0, u\oplus 0 \bigr\rangle_{L^2(\widehat{P}\otimes \widehat{Q})}\\
    &=  \bigl\langle \widehat{\mathbb{M}}_r\bigl(u_0\oplus v_0\bigr),  u\oplus 0  \bigr\rangle_{L^2(\widehat{P}\otimes \widehat{Q})}\\
    &= \int u(x)\psi''(\gamma_r(x,y)) (u_0(x)  +v_0(y))     d(\widehat{P}\otimes \widehat{Q})(x,y)\\
    &  = \int  u(x) \biggl(u_0(x) \int \psi''(\gamma_r(x,y))  d\widehat{Q}(y)  + \int \psi''(\gamma_r(x,y))v_0(y) d\widehat{Q}(y)     \biggr) d\widehat{P}(x).
\end{align*}
Since $u\in L^2(\widehat{P})$ with $\int u d\widehat{P}=0$ was arbitrary and $u_0$ is  centered, we get that, for every $i\in \{1, \dots, n\}$,
\begin{multline*}
    \lambda u_0(X_i)= u_0(X_i) \int \psi''(\gamma_r(X_i,y))  d\widehat{Q}(y)  + \int \psi''(\gamma_r(X_i,y))v_0(y) d\widehat{Q}(y)\\- \int  \biggl(u_0(x) \int \psi''(\gamma_r(x,y))  d\widehat{Q}(y)  + \int \psi''(\gamma_r(x,y))v_0(y) d\widehat{Q}(y)     \biggr) d\widehat{P}(x).
\end{multline*}
Furthermore, fixing $v\in L^2(\widehat{Q})$ and using that $\int u_0 d\widehat{P}=0$,  we derive the identity $$\lambda \langle v_0, v\rangle_{L^2(\widehat{Q})}=  \lambda \langle u_0\oplus v_0,0\oplus v \rangle_{L^2(\widehat{P}\otimes \widehat{Q})}.$$ Therefore, repeating the same argument as before, 
$$ \lambda v_0(Y_j)= v_0(Y_j) \int \psi''(\gamma_r(x,Y_j))  d\widehat{P}(x)  + \int \psi''(\gamma_r(x,Y_j))u_0(x) d\widehat{P}(x),$$
and in particular 
$$ v_0(Y_j)= \frac{\int \psi''(\gamma_r(x,Y_j))u_0(x) d\widehat{P}(x)}{\lambda-\int \psi''(\gamma_r(x,Y_j)) d\widehat{P}(x)} , $$
whenever the denominator is
nonzero. 
There are two cases, 
$$  0\leq \lambda\leq \frac{1}{2} \min_j\int \psi''(\gamma_r(x,Y_j)) d\widehat{P}(x) \quad \text{or}\quad \lambda> \min_j\frac{1}{2}\int \psi''(\gamma_r(x,Y_j)) d\widehat{P}(x) ,$$
which we analyze separately.
    
 {\it Case 1: $\lambda> \frac{1}{2}\min_j\int \psi''(\gamma_r(x,Y_j)) d\widehat{P}(x)$.}   We use the fact that $\psi''$ is monotone and $0\leq r\leq t_\eps$ to see that for some $j\in \{1, \dots, n\}$, $$\lambda> \frac{1}{2}\int \psi''(\gamma_r(x,Y_j)) d\widehat{P}(x)$$ and $$\int \psi''(\gamma_r(x,Y_j)) d\widehat{P}(x)\geq \int \psi''\biggl(\frac{\widehat{f}(x)+ \widehat{g}(Y_j)-c(x,Y_j) }{\eps}-\frac{\delta_\psi}{4}\biggr)d\widehat{P}(x).$$ 
  Arguing as in the proof of \Cref{Lemma:From two-to-one-variance}, it follows that 
$$ \int \psi''\biggl(\frac{\widehat{f}(x)+ \widehat{g}(Y_j)-c(x,Y_j) }{\eps}-\frac{\delta_\psi}{4}\biggr) d\widehat{P}(x)\geq  \psi''\bigl(t_\psi- \delta_\psi\bigr) \frac{\#\bigl\{ i: \|X_{i}-X_{i(j)}\|\leq \frac{\delta_\psi \eps}{8L} \bigr\}}{n},   $$
where 
$$i(j)\in \argmax_{i} \{\widehat{f}(X_i)+ \widehat{g}(Y_j)-c(X_i,Y_j)\}.$$
This implies 
\begin{align}\label{eq:bound-case-1-lambda}
   \int \psi''\biggl(\frac{\widehat f(x)+\widehat g(Y_j)-c(x,Y_j)}{\eps}
-\frac{\delta_\psi}{4}\biggr)\,d\widehat P(x)
\geq
\psi''(t_\psi-\delta_\psi)\min_i \widehat P\Bigl(\mathbb B_{\frac{\delta_\psi\eps}{8L}}(X_i)\Bigr),
\end{align}
which yields 
\begin{align*}
     \lambda\geq \frac{1}{2} \int \psi''\biggl(\frac{\widehat{f}(x)+ \widehat{g}(Y_j)-c(x,Y_j) }{\eps}-\frac{\delta_\psi}{4}\biggr) d\widehat{P}(x)\geq  \frac{1}{2}\psi''\bigl(t_\psi- \delta_\psi\bigr) \min_{i}  \widehat{P}\Bigl(\mathbb{B}_{\frac{\delta_\psi \eps}{8L}}(X_i)\Bigr).
\end{align*}
In this case, the proof is complete here. 

{\it Case 2: $0\leq \lambda\leq  \frac{1}{2}\min_j\int \psi''(\gamma_r(x,Y_j)) d\widehat{P}(x)$.}  By Jensen's inequality, 
    \begin{align*}
        \bigl(v_0(Y_j)\bigr)^2&=  \biggl(\frac{\int \psi''(\gamma_r(x,Y_j))u_0(x) d\widehat{P}(x)}{\lambda-\int \psi''(\gamma_r(x,Y_j)) d\widehat{P}(x)} \biggr)^2 \\&\leq 4\biggl(\frac{\int \psi''(\gamma_r(x,Y_j))u_0(x) d\widehat{P}(x)}{\int \psi''(\gamma_r(x,Y_j)) d\widehat{P}(x)} \biggr)^2 
        \leq 4 \frac{\int \psi''(\gamma_r(x,Y_j))\bigl(u_0(x)\bigr)^2 d\widehat{P}(x)}{\int \psi''(\gamma_r(x,Y_j)) d\widehat{P}(x)} .
    \end{align*}
We use the fact that $\psi''$ is monotone and $0\leq r\leq t_\eps$ to find
 \begin{align*}
        \bigl(v_0(Y_j)\bigr)^2  \leq 4 \frac{\int \psi''\Bigl(\frac{\widehat{f}(x)+ \widehat{g}(Y_j)-c(x,Y_j) }{\eps}+\frac{\delta_\psi}{4}\Bigr)\bigl(u_0(x)\bigr)^2 d\widehat{P}(x)}{\int \psi''\Bigl(\frac{\widehat{f}(x)+ \widehat{g}(Y_j)-c(x,Y_j) }{\eps}-\frac{\delta_\psi}{4}\Bigr) d\widehat{P}(x)} .
    \end{align*}
Applying \eqref{eq:bound-case-1-lambda}, we obtain
\begin{align}\label{eq:bound-case-2-lambda}
        \|v_0\|^2_{L^2(\widehat{Q})} & \leq 4 \frac{\int u_0^2 \int \psi''\Bigl(\frac{\widehat{f}\oplus \widehat{g}-c }{\eps}+\frac{\delta_\psi}{4}\Bigr) d\widehat{Q}(y) d\widehat{P}(x)}{\psi''\bigl(t_\psi-\delta_\psi\bigr) \min_{i}  \widehat{P}(\mathbb{B}_{\frac{\delta_\psi \eps}{8L}}(X_i))} \notag \\
        &\le \frac{4\gamma(n,\eps)  }{\psi''\bigl(t_\psi-\delta_\psi\bigr) \min_{i}  \widehat{P}(\mathbb{B}_{\frac{\delta_\psi \eps}{8L}} (X_i))}      \|u_0\|^2_{L^2(\widehat{P})} .
    \end{align}
Note that this implies that $\|u_0\|^2_{L^2(\widehat{P})}\neq 0$.  
Since $u_0\oplus v_0$ is an eigenvector of $\widehat{\mathbb{M}}_r$ with eigenvalue $\lambda$,  it follows that 
 \begin{align*}
     \lambda \| u_0\oplus v_0\|_{L^2(\widehat{P}\otimes \widehat{Q})}^2  &=\bigl\langle u_0\oplus v_0,  \widehat{\mathbb{M}}_r\bigl(u_0\oplus v_0\bigr)\bigr\rangle_{L^2(\widehat{P}\otimes \widehat{Q})}\geq \widehat{\alpha}{\|u_0\|_{L^2(\widehat{P})}^2} {\bf 1}_{\mathcal{E}_n},
 \end{align*}
 where in the last inequality we used \Cref{proposition:component-coercivity} and the fact that $u_0$ is centered. Using again that $u_0$ is centered, we get  $\| u_0\oplus v_0\|_{L^2(\widehat{P}\otimes \widehat{Q})}^2= \|u_0\|_{L^2(\widehat{P})}^2+ \|v_0\|_{L^2(\widehat{Q})}^2 $, which in turn yields 
 \begin{equation}
     \label{eq:lambda-larger-than-alpha}
 \lambda \geq \widehat{\alpha} \frac{\|u_0\|_{L^2(\widehat P)}^2}{\|u_0\|_{L^2(\widehat P)}^2+ \|v_0\|_{L^2(\widehat Q)}^2} {\bf 1}_{\mathcal{E}_n} \, .\end{equation}
 Plugging \eqref{eq:bound-case-2-lambda} into \eqref{eq:lambda-larger-than-alpha}, we derive 
 $$ \lambda \geq \frac{ \widehat{\alpha} \psi''\bigl(t_\psi-\delta_\psi\bigr) \min_{i}  \widehat{P}(\mathbb{B}_{\frac{\delta_\psi \eps}{8L}} (X_i))}{ \psi''\bigl(t_\psi-\delta_\psi\bigr) \min_{i}  \widehat{P}(\mathbb{B}_{\frac{\delta_\psi \eps}{8L}} (X_i))+ 4\gamma(n,\eps)} {\bf 1}_{\mathcal{E}_n} $$
 for any eigenvalue $\lambda$ of $\widehat{\mathbb{M}}_r$ under Case 2. Combining this with the lower bound obtained in Case 1 yields \(\lambda\geq \widehat\beta {\bf 1}_{\mathcal E_n}\) for every eigenvalue \(\lambda\).  This completes the proof.   
\end{proof}

\proofreadhere

\subsection{Replacing random bounds by deterministic ones}\label{se:deterministic-bounds}
There are three random terms in \Cref{proposition:coercivity}:  
$\min_{i}  \widehat{P}\Bigl(\mathbb{B}_{\frac{\delta_\psi \eps}{8L}}(X_i)\Bigr)$, $\min_{j}  \widehat{Q}\Bigl(\mathbb{B}_{\frac{\delta_\psi \eps}{8L}}(Y_j)\Bigr)$, and ${\bf 1}_{\mathcal{E}_n}$.  In this subsection we provide bounds for these quantities. 

\subsubsection{Probability of empirical balls}
The following result shows that the empirical measures of the balls are close to their population counterparts. In particular, we lower bound the probability of the events
$$ \mathcal{F}_{n}=\biggl\{\min_{j}  \widehat{Q}\Bigl(\mathbb{B}_{\frac{\delta_\psi \eps}{8L}}(Y_j)\Bigr)\geq \frac{1}{2} \min_{y\in \Omega'} {Q}\Bigl(\mathbb{B}_{\frac{\delta_\psi \eps}{8L}}(y)\Bigr) \biggr\}$$
and 
$$ \mathcal{G}_{n}=\biggl\{\min_{i}  \widehat{P}(\mathbb{B}_{\frac{\delta_\psi \eps}{8L}}(X_i))\geq \frac{1}{2} \min_{x\in \Omega} P\Bigl(\mathbb{B}_{\frac{\delta_\psi \eps}{8L}} (x)\Bigr) \biggr\}.$$
The proof, which is standard, follows from VC-theory and Dudley’s integral entropy bound.   
\begin{proposition}\label{prop-VC}
There exists a constant $C\geq 0$, independent of $\eps$, such that 
   $$ \mathbb{P}\biggl( \min_{j}  \widehat{Q}\Bigl(\mathbb{B}_{\frac{\delta_\psi \eps}{8L}}(Y_j)\Bigr)\geq \frac{1}{2}\min_{y\in \Omega'} {Q}\Bigl(\mathbb{B}_{\frac{\delta_\psi \eps}{8L}}(y)\Bigr)\biggr) \geq 1-\frac{C}{n\,\Bigl(\min_{y\in \Omega'} {Q}\Bigl(\mathbb{B}_{\frac{\delta_\psi \eps}{8L}}(y)\Bigr) \Bigr)^2} $$
and 
   $$ \mathbb{P}\biggl( \min_{i}  \widehat{P}\Bigl(\mathbb{B}_{\frac{\delta_\psi \eps}{8L}}(X_i)\Bigr)\geq \frac{1}{2}\min_{x\in \Omega} P\Bigl(\mathbb{B}_{\frac{\delta_\psi \eps}{8L}} (x)\Bigr)\biggr) \geq 1-\frac{C}{n\,\Bigl(\min_{x\in \Omega } P\Bigl(\mathbb{B}_{\frac{\delta_\psi \eps}{8L}} (x)\Bigr) \Bigr)^2}.  $$
\end{proposition}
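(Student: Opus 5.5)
Write $\delta:=\frac{\delta_\psi\eps}{8L}$ and $m_Q:=\min_{y\in\Omega'}Q(\mathbb{B}_\delta(y))$. We treat the $Q$-statement; the $P$-statement is identical with the roles of the samples exchanged. The plan is to reduce the event to a uniform deviation bound over the class of balls of radius $\delta$, and then apply Chebyshev's inequality. On the event
\[
\mathcal{D}_n:=\Bigl\{\sup_{y\in\mathbb{R}^d}\bigl|\widehat{Q}(\mathbb{B}_\delta(y))-Q(\mathbb{B}_\delta(y))\bigr|\le \tfrac12 m_Q\Bigr\},
\]
every sample point satisfies $Y_j\in\Omega'$ almost surely. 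Hence
\[
\widehat{Q}(\mathbb{B}_\delta(Y_j))\ge Q(\mathbb{B}_\delta(Y_j))-\tfrac12 m_Q\ge \tfrac12 m_Q \quad\text{for every } j.
\]
This handles the dependence between the random centre $Y_j$ and $\widehat{Q}$ at no cost, since the supremum is taken over all centres. It therefore suffices to show $\mathbb{P}(\mathcal{D}_n^c)\le C/(n m_Q^2)$.

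By Markov's inequality applied to the square,
\[
\mathbb{P}(\mathcal{D}_n^c)\le \frac{4}{m_Q^2}\,\EE\Bigl[\sup_{B\in\mathcal{B}}|\widehat{Q}(B)-Q(B)|^2\Bigr],
\]
where $\mathcal{B}$ is the class of all Euclidean balls in $\mathbb{R}^d$. We will show the expectation is at most $C/n$ with $C$ depending only on $d$, and in particular not on $\eps$ or $\delta$.

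The class $\mathcal{B}$ has VC dimension at most $d+2$, because balls are sublevel sets of the functions $x\mapsto \|x\|^2-2\langle a,x\rangle+b$, which lie in a vector space of dimension $d+2$. Hence its indicators have uniform $L^2$ covering numbers $\sup_{\mu}\mathcal{N}(\mathcal{B},L^2(\mu),\tau)\le (A/\tau)^{V}$. The uniform entropy integral $J=\int_0^1\sqrt{\log(A/\tau)^V}\,d\tau$ is then finite. Symmetrization followed by Dudley's entropy integral bound for the resulting Rademacher process gives
\[
\EE\bigl[\sup_{B}|\widehat{Q}(B)-Q(B)|^2\bigr]\lesssim J^2/n,
\]
the second-moment version of the standard VC bound. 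Alternatively, the expectation bound $\EE\sup|\cdot|\lesssim\sqrt{V/n}$ can be combined with the bounded-differences inequality, whose Efron--Stein form gives variance at most $1/n$.

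The main obstacle, though it is mild, is making sure the constant is genuinely free of $\eps$. Working with the class of all balls rather than balls of the fixed radius $\delta$ achieves this. Measurability of the supremum also needs a remark: it can be reduced to balls with rational centres and radii, up to boundary effects that are handled by using closed and open balls together. Combining the steps yields the claimed bound
\[
\mathbb{P}\bigl(\min_j \widehat{Q}(\mathbb{B}_\delta(Y_j))\ge \tfrac12 m_Q\bigr)\ge 1-\frac{C}{n\,m_Q^2},
\]
and the same argument gives the statement for $\widehat P$.
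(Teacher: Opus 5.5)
Your proposal is correct and follows essentially the same route as the paper. The paper also takes a uniform second-moment bound over the VC class of all open balls, obtained via symmetrization, Hoeffding sub-Gaussianity and Dudley's entropy integral, and then applies Markov's inequality to the square after passing from the random centres $Y_j\in\Omega'$ to a supremum over all centres. The only differences are cosmetic: you use a two-sided rather than one-sided supremum, and you mention an optional Efron--Stein variant.
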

\begin{proof}
The collection $\mathcal{B}$ of all open balls  in  $\R^d$ is a VC-class of order $d+2$ by  \cite[Lemma~2.6.15]{vanderVaart.1996}. Hence, by Theorem~2.6.4, ibid, there exists a dimensional constant $C$ such that,  for any probability measure $\mu$ and any $\delta \in (0,1)$,  
$$ \mathcal{N}( \mathcal{B}, L^2(\mu), \delta ) \leq \frac{C}{\delta^{2(d+1)}} . $$
Here $\mathcal{N}( \mathcal{B}, L^2(\mu), \delta ) $ denotes the minimal number of balls of radius $\delta>0$ in $ L^2(\mu)$ necessary to cover $\{{\bf 1}_{B}: B\in \mathcal{B}\}$. Hence, the bound 
\begin{equation}\label{eq:metric-entropy-Bound}
    \int_{0}^1 \sup_{\mu} \sqrt{\log\bigl( \mathcal{N}( \mathcal{B}, L^2(\mu), \delta )  \bigr)} d\delta \leq C
\end{equation}
on the metric entropy holds
for a possibly different dimensional constant $C$.  Define the stochastic process 
$$ \{{\bf 1}_{B}: B\in \mathcal{B}\}\ni   {\bf 1}_B\mapsto \mathbb{Q}_n ({\bf 1}_B) =  {Q}({\bf 1}_B)-\widehat{Q}({\bf 1}_B)$$ 
over the metric space $(\{{\bf 1}_{B}: B\in \mathcal{B}\}, L^2(Q))$. We define the symmetrized version of the process as 
$$ \{{\bf 1}_{B}: B\in \mathcal{B}\}\ni   {\bf 1}_B\mapsto \mathbb{Q}_n^S ({\bf 1}_B)= \frac{1}{n}\sum_{j=1}^n \epsilon_j  {\bf 1}_B(Y_j),$$ 
where $\{\epsilon_i\}_i$  is an i.i.d.~sequence of Rademacher random variables independent of $Y_1, \dots, Y_n$. First note that the random variable
$ \sup_{B\in {\mathcal{B}}}\mathbb{Q}_n ({\bf 1}_B)  $ can be written as $ \sup_{B\in \widetilde{\mathcal{B}}}\mathbb{Q}_n ({\bf 1}_B)  $, where  $\widetilde{\mathcal{B}}$ denotes the collection of all open balls  in  $\R^d$ with rational radii and centers. (This avoids measurability issues in the sequel.)  The symmetrization lemma yields
\begin{equation}
    \label{eq:symmetrization}\EE\bigl[ \bigl(\sup_{B\in {\mathcal{B}}}\mathbb{Q}_n ({\bf 1}_B) \bigr)^2\bigr] \leq 4\, \EE\bigl[ \bigl(\sup_{B\in \widetilde{\mathcal{B}}}\mathbb{Q}_n^S ({\bf 1}_B) \bigr)^2\bigr].
\end{equation}
Hence, by Hoeffding's inequality, the process $\sqrt{n}\,\mathbb{Q}_n^S$ satisfies 
$$ \PP\biggl( \sqrt{n}  \bigl|\mathbb{Q}_n^S ({\bf 1}_{B_1})- \mathbb{Q}_n^S ({\bf 1}_{B_2}) \bigr|\geq s\, \bigm\vert Y_1, \dots Y_n\biggr) \leq 2 e^{-\frac{s^2}{2 \|{\bf 1}_{B_2}-{\bf 1}_{B_1}\|^2_{L^2(\widehat{Q})} }},$$
i.e., $ \sqrt{n} \mathbb{Q}_n^S | Y_1, \dots Y_n $ is sub-Gaussian with respect to the  $L^2(\widehat{Q})$-norm. Hence, Dudley’s integral entropy bound (cf.~\cite[Corollary~2.2.8]{vanderVaart.1996})  yields 
$$n \, \EE\biggl[ \bigl(\sup_{B\in \widetilde{\mathcal{B}}}\mathbb{Q}_n^S ({\bf 1}_B)  \bigr)^2 \bigm\vert Y_1, \dots Y_n\biggr]  \lesssim \biggl( \int_{0}^1 \sqrt{\log\bigl( \mathcal{N}( \mathcal{B}, L^2(\widehat{Q}), \delta )  \bigr)} d\delta \biggr)^2  \leq C^2, $$
where the final constant is finite and deterministic by \eqref{eq:metric-entropy-Bound}. Taking expectations and using~\eqref{eq:symmetrization}, we conclude that 
$n \, \EE\big[ \big(\sup_{B\in {\mathcal{B}}}\mathbb{Q}_n ({\bf 1}_B)  \big)^2\big]=\mathcal{O}(1)$. Then
\begin{align*}
    \mathbb{P}(\mathcal{F}_n)&\geq \mathbb{P}\biggl( \min_{y\in \Omega'}  \widehat{Q}\Bigl(\mathbb{B}_{\frac{\delta_\psi \eps}{8L}}(y)\Bigr) \geq \frac{1}{2}\min_{y\in \Omega'} {Q}\Bigl(\mathbb{B}_{\frac{\delta_\psi \eps}{8L}}(y)\Bigr)\biggr) \\
    &\geqslant  \mathbb{P}\biggl( \sup_{y\in \Omega'} \mathbb{Q}_n\Bigl(\mathbb{B}_{\frac{\delta_\psi \eps}{8L}}(y)\Bigr) \leq \frac{1}{2}\min_{y\in \Omega'} {Q}\Bigl(\mathbb{B}_{\frac{\delta_\psi \eps}{8L}}(y)\Bigr)\biggr)\\
    &=1-\mathbb{P}\biggl( \sup_{y\in \Omega'}  \mathbb{Q}_n\Bigl(\mathbb{B}_{\frac{\delta_\psi \eps}{8L}}(y)\Bigr) >  \frac{1}{2}\min_{y\in \Omega'} {Q}\Bigl(\mathbb{B}_{\frac{\delta_\psi \eps}{8L}}(y)\Bigr)\biggr)\\
    &\geq 1- \frac{4\EE\bigl[ \bigl( \sup_{B\in {\mathcal{B}}}\mathbb{Q}_n ({\bf 1}_B) \bigr)^2 \bigr]}{ \Bigl(\min_{y\in \Omega'} {Q}\Bigl(\mathbb{B}_{\frac{\delta_\psi \eps}{8L}}(y)\Bigr) \Bigr)^2},
\end{align*}
completing the proof. The estimate for \(\mathcal G_n\) is obtained by the identical argument with \(P,X_i,\Omega\) in place of \(Q,Y_j,\Omega'\).
\end{proof}

\subsubsection{The $\infty$-Wasserstein distance}

In this section we recall some of the results from \cite{Trillos-Wass-infty} regarding the empirical  $\infty$-Wasserstein distance, defined as 
 $$ \widehat{W}_\infty = \inf_{\pi \in \Pi(P, \widehat P)} \bigl\|x - y\bigr\|_{L^\infty(\pi)}.$$
Because $P$ is absolutely continuous with respect to Lebesgue measure, the infimum is attained by a map, in the sense that for each $n\in \NN$ there exists a transport map $\hat{T}$ with $\hat{T}_{\#} P=\widehat{P}$ and 
 $$ \widehat{W}_\infty =  \|\hat{T}-I\|_{L^\infty(P)}.$$
As a consequence, we have the equality 
$$\mathcal{E}_n= \Bigl\{   \widehat{W}_\infty \leq {\frac{\delta_\psi\eps}{32 L}}\Bigr\}. $$
We define the sequence
$$ \alpha_{n,d}=\begin{cases}
  (\log(n))^{\frac{1}{2}}  {n^{-\frac{1}{2}}} & \text{if } d=1,\\
{(\log(n))^{\frac{3}{4}}}{n^{-\frac{1}{2}}} & \text{if } d=2,\\
{(\log(n))^{\frac{1}{d}}}{n^{-\frac{1}{d}}} & \text{if } d>2.
\end{cases}$$
\begin{theorem}[\cite{Trillos-Wass-infty} and \cite{Liu.Liu.Lu.2019.Quart}]\label{theo-Wasserstein_infty}
   There exist constants $C,A\geq 0$ such that for all $n\in \NN$, 
   $$ \mathbb{P}\bigl( \widehat{W}_\infty\geq A\alpha_{n,d}\bigr) \leq \frac{C}{n^2}.$$

\end{theorem}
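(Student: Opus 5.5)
The plan is to reduce the statement to the known high-probability bounds for the empirical $\infty$-Wasserstein distance, after checking that \cref{assumption-P-strong} matches their hypotheses. Under \cref{assumption-P-strong}, $\Omega$ is a bounded convex set with nonempty interior. It is therefore a connected, bounded domain with Lipschitz boundary. Moreover, $P$ has a density with $\lambda_P\le\rho\le\Lambda_P$ on $\Omega$. These are exactly the hypotheses of the main theorem of \cite{Trillos-Wass-infty}. That theorem states that for every $\beta>1$ there is a constant $A=A(\beta,\Omega,\lambda_P,\Lambda_P,d)$ such that, with probability at least $1-C n^{-\beta}$, a transport map $\hat T$ with $\hat T_\#P=\widehat P$ exists satisfying $\|\hat T-{\rm I}\|_{L^\infty(P)}\le A\,\alpha_{n,d}$, for $d\ge 2$ and with the rates $\alpha_{n,d}$ exactly as defined above. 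I would apply it with $\beta=2$. Since $\widehat W_\infty\le\|\hat T-{\rm I}\|_{L^\infty(P)}$ for any such map, this gives the claim for $d\ge2$ and all $n\ge n_0$.

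The case $d=1$ I would prove directly rather than cite. Here $\Omega=[a,b]$, and the monotone rearrangement $\hat T=\widehat F^{-1}\circ F$ is optimal for $W_\infty$. The lower bound $\rho\ge\lambda_P$ makes $F^{-1}$ Lipschitz with constant $1/\lambda_P$, which yields
$$\widehat W_\infty\le \frac{1}{\lambda_P}\,\sup_t|\widehat F(t)-F(t)|.$$
The Dvoretzky--Kiefer--Wolfowitz inequality gives $\PP(\sup_t|\widehat F-F|\ge s)\le 2e^{-2ns^2}$. Choosing $s=\lambda_P A\sqrt{\log n/n}$ with $A^2\lambda_P^2\ge1$ makes the right-hand side at most $2n^{-2}$. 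This is consistent with the $d=1$ rate, and \cite{Liu.Liu.Lu.2019.Quart} can be cited as an alternative for the low-dimensional cases.

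Finally, the statement asks for the bound for all $n\in\NN$, not only $n\ge n_0$. Since $\PP(\cdot)\le1$, I would enlarge $C$ to at least $n_0^2$; then $C/n^2\ge1$ for $n<n_0$, and the inequality holds trivially there.

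The main obstacle is bookkeeping rather than mathematics. The cited theorem must be stated with a polynomial tail $n^{-\beta}$ for arbitrary $\beta$, with the constant $A$ depending on $\beta$ but not on $n$. If it is stated only with $\beta$ tied to some fixed exponent, I would instead rerun its proof, which combines a dyadic partition with a union bound over cells using Bernstein's inequality. Inflating $A$ by a constant factor raises the exponent of the union bound and so gives $\beta=2$.
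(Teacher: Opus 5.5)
Your proposal is correct and follows essentially the paper's route. The paper gives no argument beyond citing \cite{Trillos-Wass-infty}, whose main theorem covers $d\ge 2$ on bounded connected Lipschitz domains with exceptional probability $O(n^{-\alpha/2})$ for any fixed $\alpha>2$ (so $\alpha=4$ gives the $n^{-2}$ tail), together with \cite{Liu.Liu.Lu.2019.Quart}. The only difference is that you prove $d=1$ directly via the quantile coupling and the Dvoretzky--Kiefer--Wolfowitz inequality instead of citing, which is a valid self-contained substitute; enlarging $C$ also correctly handles small $n$, including $n=1$ where $\alpha_{1,d}=0$.
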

\begin{corollary}\label{corollary:Nicolas}
Fix ${u>\max(d,2)}$. Then 
there exists $n_0\in \NN$ such that for all $n\geq n_0$, 
$$ \EE\bigl[\widehat{W}_\infty^{{u}}\bigr] \leq {\frac{\delta_\psi^{{u}}}{n (32 L)^{{u}}}}.$$
As a consequence, for any ${u>\max(d,2)}$  there exists  $n_0\in \NN$ such that for all $n\geq n_0$,  
$$\mathbb{P}(\mathcal{E}_n^c) \leq \frac{1}{\eps^{{u}} n }.$$
\end{corollary}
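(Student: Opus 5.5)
The plan is to turn the tail bound of \cref{theo-Wasserstein_infty} into a moment bound by splitting the expectation over the event $\{\widehat W_\infty< A\alpha_{n,d}\}$ and its complement. The second claim then follows from Markov's inequality.

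The first step is a deterministic a.s.\ bound $\widehat W_\infty\le {\rm diam}(\Omega)$. It holds because $\widehat P$ is supported on sample points lying in $\Omega$. Indeed, any map $\hat T$ with $\hat T_\#P=\widehat P$ takes values in $\{X_1,\dots,X_n\}\subset\Omega$, and $P$-a.e.\ $x\in\Omega$, so $\|\hat T-{\rm I}\|_{L^\infty(P)}\le {\rm diam}(\Omega)$. Then
\[
\EE\bigl[\widehat W_\infty^{u}\bigr]\le (A\alpha_{n,d})^{u}+{\rm diam}(\Omega)^{u}\,\mathbb{P}\bigl(\widehat W_\infty\ge A\alpha_{n,d}\bigr)\le (A\alpha_{n,d})^{u}+\frac{C\,{\rm diam}(\Omega)^{u}}{n^2}.
\]

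The second step compares each term with $1/n$ and uses $u>\max(d,2)$. For $d>2$, $(\alpha_{n,d})^u=(\log n)^{u/d}n^{-u/d}$ with $u/d>1$, so $n(\alpha_{n,d})^u\to0$. For $d\in\{1,2\}$, $(\alpha_{n,d})^u$ is a power of $\log n$ times $n^{-u/2}$ with $u/2>1$, so again $n(\alpha_{n,d})^u\to0$. The remaining term satisfies $n\cdot C{\rm diam}(\Omega)^u/n^2\to0$. Hence $n\,\EE[\widehat W_\infty^u]\to0$, and there is $n_0$ with $\EE[\widehat W_\infty^u]\le \delta_\psi^u/(n(32L)^u)$ for $n\ge n_0$. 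The constants $A$, $C$ and ${\rm diam}(\Omega)$ are fixed, so this threshold exists; this is the only place any care is needed.

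For the consequence, I use the identification $\mathcal E_n=\{\widehat W_\infty\le \delta_\psi\eps/(32L)\}$ together with Markov's inequality applied to $\widehat W_\infty^u$:
\[
\mathbb{P}(\mathcal E_n^c)\le \mathbb{P}\Bigl(\widehat W_\infty^u\ge \bigl(\tfrac{\delta_\psi\eps}{32L}\bigr)^u\Bigr)\le \frac{(32L)^u}{\delta_\psi^u\eps^u}\,\EE\bigl[\widehat W_\infty^u\bigr]\le \frac{1}{\eps^u n}.
\]
The $n_0$ obtained this way does not depend on $\eps$, so the bound holds uniformly in $\eps$. There is no real obstacle beyond the a.s.\ bound on $\widehat W_\infty$ and checking the exponents.
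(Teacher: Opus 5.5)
Your proposal is correct and follows essentially the same route as the paper: it also uses $\widehat W_\infty\le{\rm diam}(\Omega)$ to bound $\EE[\widehat W_\infty^u]$ by $(A\alpha_{n,d})^u+{\rm diam}(\Omega)^u\,\mathbb{P}(\widehat W_\infty\ge A\alpha_{n,d})$, via the layer-cake formula rather than your event splitting, and then applies Markov's inequality. Your explicit check that $n\,\alpha_{n,d}^u\to0$ exactly when $u>\max(d,2)$ spells out the step the paper compresses into ``for $n$ sufficiently large.''
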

\begin{proof}
  Since $\widehat{W}_\infty\leq {\rm diam}(\Omega
  )$ and for $n$ sufficiently large $A\alpha_{n,d}\leq {\rm diam}(\Omega)$, then 
    \begin{align*}
        \EE\bigl[\widehat{W}_\infty^u\bigr]&= \int_{0}^\infty \mathbb{P}\bigl( \widehat{W}_\infty\geq \lambda^{\frac{1}{u}} \bigr)  d\lambda\\
        &=\int_{0}^{\bigl({\rm diam}(\Omega)\bigr)^u} \mathbb{P}\bigl( \widehat{W}_\infty\geq \lambda^{\frac{1}{u}} \bigr)  d\lambda \\
        &=  \int_{0}^{\bigl(A \alpha_{n,d}\bigr)^u} \mathbb{P}\bigl( \widehat{W}_\infty\geq \lambda^{\frac{1}{u}} \bigr)  d\lambda +  \int_{\bigl(A \alpha_{n,d}\bigr)^u}^{\bigl({\rm diam}(\Omega)\bigr)^u}\mathbb{P}\bigl( \widehat{W}_\infty\geq \lambda^{\frac{1}{u}} \bigr)  d\lambda\\
         &\leq \bigl(A \alpha_{n,d}\bigr)^u +  \bigl({\rm diam}(\Omega)\bigr)^u \mathbb{P}\bigl( \widehat{W}_\infty\geq A \alpha_{n,d} \bigr)  \leq \frac{\delta_\psi^u}{n (32 L)^u}
    ,\end{align*}
where the last inequality holds for $n$ sufficiently large, by \Cref{theo-Wasserstein_infty}. Hence, the first claim holds. The second is derived from the first via Markov's inequality. 
\end{proof}
\subsection{Conclusion of the proof of \cref{theorem:potentials}}

\begin{proof}[Proof of \cref{theorem:potentials}]
We recall that
$$  \beta_{n,\eps}= \min\Biggl(\frac{ 4{\alpha}_{n,\eps} \psi''\bigl(t_\psi-\delta_\psi\bigr) \min_{x\in \Omega}  P\Bigl(\mathbb{B}_{\frac{\delta_\psi \eps}{8L}} (x)\Bigr)}{ \psi''\bigl(t_\psi-\delta_\psi\bigr) \min_{x\in \Omega}  P\Bigl(\mathbb{B}_{\frac{\delta_\psi \eps}{8L}} (x)\Bigr)+ 8\gamma(n,\eps)}, \frac{1}{2}\psi''\bigl(t_\psi-\delta_\psi\bigr) \min_{x\in \Omega}  P\Bigl(\mathbb{B}_{\frac{\delta_\psi \eps}{8L}} (x)\Bigr) \Biggr),  $$
where
$$ \alpha_{n,\eps}=   \frac{ \lambda_P^2 \psi''\bigl(t_\psi -\delta_\psi  \bigr)^2  \min_{y\in \Omega' }  {Q}\Bigl(\mathbb{B}_{\frac{\delta_\psi \eps}{8L}}(y)\Bigr)}{8\Lambda_P^2 \gamma(n,\eps) \Bigl\lceil\frac{16 L{\rm diam}(\Omega)}{\delta_\psi\eps } \Bigr\rceil^{d+2}}. $$
By the definitions of $\mathcal{F}_n$ and $ \mathcal{G}_n$,
$$ \widehat{\beta}\mathbf{1}_{\mathcal{E}_n \cap \mathcal{F}_n \cap \mathcal{G}_n} \geq \frac{\beta_{n,\eps}}{2}\mathbf{1}_{\mathcal{E}_n \cap \mathcal{F}_n \cap \mathcal{G}_n} . $$
To simplify the notation, we set
$$ A_n = \int  \Bigl(\bigl( \widehat{f} - f_\eps \bigr) \oplus \bigl( \widehat{g} - g_\eps \bigr) \Bigr)^2  d(\widehat{P}\otimes \widehat{Q}) .$$
Hence, \Cref{proposition:coercivity} yields 
\begin{align*}
    \Gamma(t_\eps) -\Gamma(0) &\geq      \frac{t_\eps^2\, \beta_{n,\eps} \, A_n}{ 8 \eps}  \mathbf{1}_{\mathcal{E}_n \cap \mathcal{F}_n \cap \mathcal{G}_n} = \frac{t_\eps^2\, \beta_{n,\eps}  A_n }{8 \eps}  \bigl(1- \mathbf{1}_{\mathcal{E}_n^c \cup \mathcal{F}_n^c\cup \mathcal{G}_n^c}\bigr),
\end{align*}
which implies 
\begin{align*}
   \frac{t_\eps^2\, \beta_{n,\eps}  A_n }{8\eps} &\leq    \frac{t_\eps^2\beta_{n,\eps}}{4\eps}\bigl(  \|f_\eps\oplus g_\eps\|_\infty^2  +  \|\widehat{f}\oplus \widehat{g}\|_\infty^2  \bigr) \mathbf{1}_{\mathcal{E}_n^c \cup \mathcal{F}_n^c \cup \mathcal{G}_n^c} + \Gamma(t_\eps)-\Gamma(0)\\
\text{(\Cref{pr:ROTprelims}) }&\leq   \frac{t_\eps^2\beta_{n,\eps}}{2\eps} \bigl(5 \|c\|_\infty + \eps \phi'(1)\bigr)^2 \mathbf{1}_{\mathcal{E}_n^c \cup \mathcal{F}_n^c \cup \mathcal{G}_n^c}+ \Gamma(t_\eps)-\Gamma(0) \\
\text{(convexity of $\Gamma$) }&\leq \frac{t_\eps^2\beta_{n,\eps}}{2\eps} \bigl(5 \|c\|_\infty + \eps \phi'(1)\bigr)^2\mathbf{1}_{\mathcal{E}_n^c \cup \mathcal{F}_n^c \cup \mathcal{G}_n^c} + t_\eps\Gamma' (t_\eps)\\
\text{(monotonicity of $\Gamma'$) } &\leq \frac{t_\eps^2\beta_{n,\eps}}{2\eps} \bigl(5 \|c\|_\infty + \eps \phi'(1)\bigr)^2\mathbf{1}_{\mathcal{E}_n^c \cup \mathcal{F}_n^c \cup \mathcal{G}_n^c}+ t_\eps\Gamma' (1).
\end{align*} 
Taking expectations, we derive the bound 
\begin{align*}
   \frac{t_\eps^2\, \beta_{n,\eps}  \EE\bigl[A_n\bigr] }{8\eps} &\leq \frac{t_\eps^2\beta_{n,\eps}}{2\eps} \bigl(5 \|c\|_\infty + \eps \phi'(1)\bigr)^2\PP\bigl({\mathcal{E}_n^c \cup \mathcal{F}_n^c \cup  \mathcal{G}_n^c}\bigr) + t_\eps \EE\bigl[\Gamma' (1)\bigr]\\
    &\leq \frac{t_\eps^2\beta_{n,\eps}}{2\eps} \bigl(5 \|c\|_\infty + \eps \phi'(1)\bigr)^2 \bigl(\PP(\mathcal{E}_n^c) + \PP(\mathcal{F}_n^c)+\PP(\mathcal{G}_n^c)\bigr) + t_\eps \EE\bigl[\Gamma' (1)\bigr].
\end{align*}
We fix $u>\max(d,2)$ and apply \Cref{corollary:Nicolas} and \Cref{prop-VC} to get 
\begin{multline*}
     \frac{t_\eps^2\, \beta_{n,\eps}  \EE\bigl[A_n\bigr] }{8\eps} \\ \leq \frac{t_\eps^2\beta_{n,\eps}}{2\eps} \bigl(5 \|c\|_\infty + \eps \phi'(1)\bigr)^2 \biggl(\frac{1}{\eps^u n} + \frac{C}{n  \Bigl(\min_{x\in \Omega}  P\Bigl(\mathbb{B}_{\frac{\delta_\psi \eps}{8L}} (x)\Bigr) \Bigr)^2 } + \frac{C}{n  \Bigl(\min_{y\in \Omega'}  {Q}\Bigl(\mathbb{B}_{\frac{\delta_\psi \eps}{8L}} (y)\Bigr) \Bigr)^2 } \biggr)\\+ t_\eps \EE\bigl[\Gamma' (1)\bigr].
\end{multline*}
Note that 
$$\EE\bigl[ \Gamma' (1)\bigr] = \EE\biggl[ \int \bigl( \widehat{f} - f_\eps \bigr) \oplus \bigl( \widehat{g} - g_\eps \bigr) 
\bigl( 1 - \psi'\biggl(\frac{f_\eps \oplus g_\eps - c}{\eps}\biggr) \bigr) \, d\widehat{P} \otimes \widehat{Q} \biggr] ,$$
which we can bound as in the proof of \Cref{pr:general} to obtain 
$$\EE\bigl[ \Gamma' (1)\bigr] \leq  \biggl(\frac{2\EE\bigl[A_n\bigr]{\rm Var}_{{P\otimes Q}} [\rho_\eps]}{n}  \biggr)^{\frac{1}{2}}. $$
Therefore, to conclude the proof, it suffices to solve the inequality 
$$  \EE\bigl[A_n\bigr] - \bigl(\EE\bigl[A_n\bigr]\bigr)^{\frac{1}{2}} A - B \leq 0$$
with $$A={\frac{8\sqrt{2}\,\eps}{t_\eps\, \beta_{n,\eps}  }} \biggl(\frac{{\rm Var}_{{P\otimes Q}} [\rho_\eps]}{n}  \biggr)^{\frac{1}{2}} $$
and 
$$  B=\frac{ 4\bigl(5 \|c\|_\infty + \eps \phi'(1)\bigr)^2}{n}  \biggl(\frac{1}{\eps^u} + \frac{C}{\Bigl(\min_{x\in \Omega}  P\Bigl(\mathbb{B}_{\frac{\delta_\psi \eps}{8L}} (x)\Bigr) \Bigr)^2 } + \frac{C}{\Bigl(\min_{y\in \Omega'}  {Q}\Bigl(\mathbb{B}_{\frac{\delta_\psi \eps}{8L}} (y)\Bigr) \Bigr)^2 } \biggr).$$
Indeed, this inequality implies that $(\EE[A_n])^{\frac{1}{2}}$ is bounded from above by
\begin{align*}
      & \frac{A+ \sqrt{A^2+4 B}}{2} \\
    & \leq  A+\sqrt{B}\\
     &= {\frac{8\sqrt{2}\,\eps}{t_\eps\, \beta_{n,\eps}  }} \biggl(\frac{{\rm Var}_{{P\otimes Q}} \bigl[\rho_\eps  \bigr]}{n}  \biggr)^{\frac{1}{2}}\\
     &\qquad + \biggl( \frac{ 4\bigl(5 \|c\|_\infty + \eps \phi'(1)\bigr)^2}{n}  \biggl(\frac{1}{\eps^u} + \frac{C}{\Bigl(\min_{x\in \Omega}  P\Bigl(\mathbb{B}_{\frac{\delta_\psi \eps}{8L}} (x)\Bigr) \Bigr)^2 } + \frac{C}{\Bigl(\min_{y\in \Omega'}  {Q}\Bigl(\mathbb{B}_{\frac{\delta_\psi \eps}{8L}} (y)\Bigr) \Bigr)^2 } \biggr)\biggr)^{\frac{1}{2}}.
\end{align*}
Hence,
\begin{align*}
 \EE\bigl[A_n\bigr] & \leq  \frac{{2^8} \eps^2}{t_\eps^2\, (\beta_{n,\eps})^2  } \frac{{\rm Var}_{{P\otimes Q}} [\rho_\eps]}{n} \\
 &\quad+  \frac{ 8\bigl(5 \|c\|_\infty + \eps \phi'(1)\bigr)^2}{n}  \biggl(\frac{1}{\eps^u} + \frac{C}{\Bigl(\min_{x\in \Omega}  P\Bigl(\mathbb{B}_{\frac{\delta_\psi \eps}{8L}} (x)\Bigr) \Bigr)^2 } + \frac{C}{\Bigl(\min_{y\in \Omega'}  {Q}\Bigl(\mathbb{B}_{\frac{\delta_\psi \eps}{8L}} (y)\Bigr) \Bigr)^2 }  \biggr)\\
 &= \frac{2^{8} \max\left(\frac{2^{6}\bigl(5\|c\|_{\infty} + \eps \phi'(1)\bigr)^2}{\delta_\psi^2}, 1\right) }{ (\beta_{n,\eps})^2  } \frac{{\rm Var}_{{P\otimes Q}} [\rho_\eps]}{n} \\
 &\quad +  \frac{ 8\bigl(5 \|c\|_\infty + \eps \phi'(1)\bigr)^2}{n}  \biggl(\frac{1}{\eps^u} + \frac{C}{\Bigl(\min_{x\in \Omega}  P\Bigl(\mathbb{B}_{\frac{\delta_\psi \eps}{8L}} (x)\Bigr) \Bigr)^2 } + \frac{C}{\Bigl(\min_{y\in \Omega'}  {Q}\Bigl(\mathbb{B}_{\frac{\delta_\psi \eps}{8L}} (y)\Bigr) \Bigr)^2 }  \biggr),
\end{align*} 
concluding the proof of \cref{theorem:potentials}. 
\end{proof}

\subsection{Proof of \Cref{lemma:Bound-Gamma}}
\begin{proof}[Proof of \Cref{lemma:Bound-Gamma}]
 For $p=2$, $|\psi''_2|\leq 1$, which implies
 \[
 \frac{1}{n}\sum_i \pi_{i,j}^{(1)}\leq 1
 \qquad\text{and}\qquad
 \frac{1}{n}\sum_j \pi_{i,j}^{(1)}\leq 1.
 \]
 For $p=1$, we have
 \[
\frac1n\sum_i \pi_{i,j}^{(1)}
=
\frac{\exp\bigl( \frac{1}{8}\bigr)}{n}\sum_i
\exp\biggl(\frac{\widehat f(X_i)+\widehat g(Y_j)-c(X_i,Y_j)}{\eps}-1\biggr)
=
\exp\left( \frac{1}{8}\right),
\]
where the last equality follows from the empirical first-order condition. Similarly,
\[
\frac1n\sum_j \pi_{i,j}^{(1)}=\exp\left(\frac18\right).
\]
Thus the claim holds for $p=1$ as well.

It remains to consider $p\in (1,2)$. Then the conjugate exponent $q$ belongs to
$(2,\infty)$. We have $t_{\psi_p}=1$ and $\delta_{\psi_p}=1/2$. Moreover,
\[
\psi_p''(t)=(q-1)(t)_+^{q-2}.
\]
Hence
\[
\pi_{i,j}^{(1)}
= (q - 1)\biggl(
\frac{\widehat{f}(X_i)+\widehat{g}(Y_j)-c(X_i,Y_j)}{\eps}
+ \frac18
\biggr)^{q-2}_+ .
\]
We use the elementary bound
\begin{equation}
    \label{eq:elementary}
\left(a+\frac18\right)_+^{q-2}
\leq C_q\left(a_+^{q-2}+1\right),
\qquad a\in\mathbb R,
\end{equation}
for some finite constant $C_q$ depending only on $q$. Indeed, since
\[
\left(a+\frac18\right)_+\leq a_+ + \frac18,
\]
the claim \eqref{eq:elementary} follows from the standard inequalities
\[
(x+y)^\alpha\leq x^\alpha+y^\alpha,\qquad 0<\alpha\leq 1,
\]
and
\[
(x+y)^\alpha\leq 2^{\alpha-1}(x^\alpha+y^\alpha),\qquad \alpha\geq 1,
\]
applied with $\alpha=q-2$, together with $(1/8)^\alpha\leq 1$.
Therefore,
\[
\pi_{i,j}^{(1)}
\leq
(q-1)C_q
\left[
\biggl(
\frac{\widehat{f}(X_i)+\widehat{g}(Y_j)-c(X_i,Y_j)}{\eps}
\biggr)_+^{q-2}
+1
\right].
\]
Consequently,
\begin{align*}
     \frac{1}{n} \sum_{i=1}^n\pi_{i,j}^{(1)}
     &\leq
     (q-1)C_q
     \left[
     \frac{1}{n}\sum_{i=1}^n
     \biggl(
     \frac{\widehat{f}(X_i)+\widehat{g}(Y_j)-c(X_i,Y_j)}{\eps}
     \biggr)_+^{q-2}
     +1
     \right].
\end{align*}
Fix $s= \frac{q-1}{q-2}$. By Jensen's inequality,
\begin{align*}
  \frac{1}{n}\sum_{i=1}^n
  \biggl(
  \frac{\widehat{f}(X_i)+\widehat{g}(Y_j)-c(X_i,Y_j)}{\eps}
  \biggr)_+^{q-2}
  &\leq
  \biggl(
  \frac{1}{n}\sum_{i=1}^n
  \biggl(
  \frac{\widehat{f}(X_i)+\widehat{g}(Y_j)-c(X_i,Y_j)}{\eps}
  \biggr)_+^{(q-2)s}
  \biggr)^{1/s} \\
  &=
  \biggl(
  \frac{1}{n}\sum_{i=1}^n
  \biggl(
  \frac{\widehat{f}(X_i)+\widehat{g}(Y_j)-c(X_i,Y_j)}{\eps}
  \biggr)_+^{q-1}
  \biggr)^{\frac{q-2}{q-1}} \\
  &= 1,
\end{align*}
where the last equality follows from the empirical first-order condition in the $X$-variable.
Hence
\[
\frac{1}{n} \sum_{i=1}^n\pi_{i,j}^{(1)}
\leq 2(q-1)C_q .
\]
The same argument, using the empirical first-order condition in the $Y$-variable, yields
\[
\frac{1}{n} \sum_{j=1}^n\pi_{i,j}^{(1)}
\leq 2(q-1)C_q .
\]
Thus both averages are bounded by a finite constant depending only on $p$, and the result follows.
\end{proof}

\end{document}